\theoremstyle{plain} 
\newtheorem{theorem}{Theorem}[section]
\newtheorem{corollary}[theorem]{Corollary}
\newtheorem{lemma}[theorem]{Lemma}
\theoremstyle{definition}
\newtheorem{definition}[theorem]{Definition}
\newtheorem{remark}[theorem]{Remark}
\newtheorem{example}[theorem]{Example}
\DeclareSymbolFont{tipa}{T3}{cmr}{m}{n}
\DeclareMathAccent{\invbreve}{\mathalpha}{tipa}{16}
\title[Coefficients of Catalan States of Lattice Crossing II]{Coefficients of Catalan States of Lattice Crossing II: \\ Applications of $\Theta_{A}$-state Expansions}
\author{Mieczyslaw K. Dabkowski}
\address{Department of Mathematical Sciences, The University of Texas at Dallas, Richardson, TX 75080}
\email{mdab@utdallas.edu}
\author{Cheyu Wu}
\address{Department of Mathematical Sciences, The University of Texas at Dallas, Richardson, TX 75080}
\email{cheyu.wu@utdallas.edu}
\begin{document}

\maketitle

\begin{abstract}
Plucking polynomial of a plane rooted tree with a delay function $\alpha$ was introduced in 2014 by J.H.~Przytycki. As shown in this paper, plucking polynomial factors when $\alpha$ satisfies additional conditions. We use this result and $\Theta_{A}$-state expansion introduced in our previous work to derive new properties of coefficients $C(A)$ of Catalan states $C$ resulting from an $m \times n$-lattice crossing $L(m,n)$. In particular, we show that $C(A)$ factors when $C$ has arcs with some special properties. In many instances, this yields a more efficient way for computing $C(A)$. As an application, we give closed-form formulas for coefficients of Catalan states of $L(m,3)$.
\end{abstract}


\section{Introduction}
\label{s:intro}

Skein modules were introduced by J.H.~Przytycki \cites{Prz1999,Prz1991} in 1987 and later by V.~Turaev \cite{Tur1990} in 1988. For $M^{3} = \Sigma_{g,n} \times I$, where $\Sigma_{g,n}$ is an oriented surface of genus $g$ with $n$ boundary components and $I = [0,1]$, the Kauffman Bracket Skein Module of $M^{3}$ is also an algebra called the Kauffman Bracket Skein Algebra (KBSA) of $\Sigma_{g,n}$. In 1999, D.~Bullock proved that these algebras are finitely generated \cite{Bul1999} and later in 2000, D.~Bullock and J.H.~Przytycki studied KBSAs of $\Sigma_{g,n}$ for $(g,n)\in \{(1,0),(1,1),(1,2),(0,4)\}$ in \cites{BP2000}. The recent growing interest in KBSAs resulted in several publications (see \cites{PS2019,FK2018,Le2018,FKL2019}) including also an earlier important result called \emph{``product-to-sum formula''} found by C.~Frohman and R.~Gelca in \cite{FG2000} for multiplication of curves on $\Sigma_{1,0}$. The first result concerning product of curves on $\Sigma_{0,4}$ (a presentation for KBSA of $\Sigma_{0,4}$) was obtained by D.~Bullock and J.H.~Przytycki in \cites{BP2000}. In 2018, R.P.~Bakshi et al. gave an algorithm in \cite{BMPSW2018} for computing product of curves on $\Sigma_{0,4}$ and, in particular, closed-form formulas for multiplication of two special families of curves. As these formulas are rather involved, it is worth to understand product of curves on $\Sigma_{0,4}$ locally. Such an approach was originated in \cite{DLP2015}, where the problem of finding coefficients $C(A)$ of Catalan states $C$ of an $m \times n$-lattice crossing $L(m,n)$ was formulated. This led to several new developments, e.g., plucking polynomial of plane rooted trees (see \cite{Prz2016} and \cites{CMPWY2017,CMPWY2018,CMPWY2019}), formula for coefficients of Catalan states with no returns on one side (see \cite{DP2019}), closed-form formulas for coefficients of some infinite families of Catalan states (see \cite{DM2021}), and coefficients of some B-type Catalan states (see \cite{DR2021}). Finally, a new method that allows us to compute $C(A)$ for any Catalan state $C$, called a $\Theta_{A}$-state expansion, was introduced in \cite{DW2022}. As we show in this paper, $\Theta_{A}$-state expansions can also be used to study properties of $C(A)$. In particular, we show that if $C$ has a removable arc $c$ (see Definition~\ref{def:removable_arc}), then $C(A)$ up to a power of $A$ can be found by computing $C'(A)$, where $C' = C\smallsetminus c$ is the Catalan state obtained from $C$ after removing $c$. Furthermore, if $C$ has a local family of arcs $\Lambda$ that vertically factorizes $C$ (see Definition~\ref{def:vertical_factor}), then $C(A)$ factors into a product of coefficients of two ``simpler" Catalan states determined by $\Lambda$ and its complement in the set of arcs of $C$, respectively. Proofs for both results use $\Theta_{A}$-state expansion, a new property of plucking polynomial obtained in this paper, and a connection between coefficients of Catalan states with no bottom returns and plucking polynomial found in \cite{DP2019}.

The paper is organized as follows. In Section~\ref{s:pre}, a brief summary of necessary notions and results is given. In Section~\ref{s:factor_plucking_poly}, we prove in Theorem~\ref{thm:prod_formula_plucking} that plucking polynomial of a plane rooted tree with delay can be factored when its rooted subtree satisfies conditions of Definition~\ref{def:splitting_subtree}. In Section~\ref{s:removable_arc_thm}, we prove the \emph{Removable Arc Theorem} (see Theorem~\ref{thm:remove_an_arc}). Its proof is derived in several steps, which include a new formula given in Corollary~\ref{cor:lr_relations} for the sum of terms of the maximal sequence $\mathbf{b}$ for $C$ and, as a by-product, a formula given in Corollary~\ref{cor:b_seq} for each term of $\mathbf{b}$. As an application of Theorem~\ref{thm:remove_an_arc}, we give closed-form formulas for coefficients of Catalan states of $L(m,3)$. In Section~\ref{s:vertical_factor_thm}, we prove the \emph{Vertical Factorization Theorem} (see Theorem~\ref{thm:vertical_factor_thm}). This result is established after existence of a $\Theta_{A}$-state expansion with additional properties is shown in Lemma~\ref{lem:theta_state_expn_new}.

\section{Preliminaries}
\label{s:pre}

Given a rectangle $\mathrm{R}^{2}_{m,n,2k-n}$ with $2(m+k)$ points fixed on its boundary (see Figure~\ref{fig:LC_intro}(a)), a \emph{crossingless connection} $C$ in $\mathrm{R}^{2}_{m,n,2k-n}$ consists of $(m+k)$ arcs embedded in $\mathrm{R}^{2}_{m,n,2k-n}$ that join points on its boundary. For a crossingless connection $C$, its points on the top side (respectively, bottom, left, and right side) of $\mathrm{R}^{2}_{m,n,2k-n}$ are called top-boundary points (respectively, bottom-, left-, and right-boundary points). We denote by $\mathrm{ht}(C), n_{t}(C)$, $n_{b}(C)$ the number of left-, top-, bottom-boundary points of $C$ and we label boundary points of $C$ by $x_{i}, x'_{i}, y_{j}$, and $y'_{j}$ (see Figure~\ref{fig:LC_intro}(a)). An arc of $C$ joining a pair of top-boundary (bottom-, left-, or right-boundary) points is called a \emph{top} (\emph{bottom, left, or right}) \emph{return}.

\begin{figure}[ht] 
\centering
\includegraphics[scale=1]{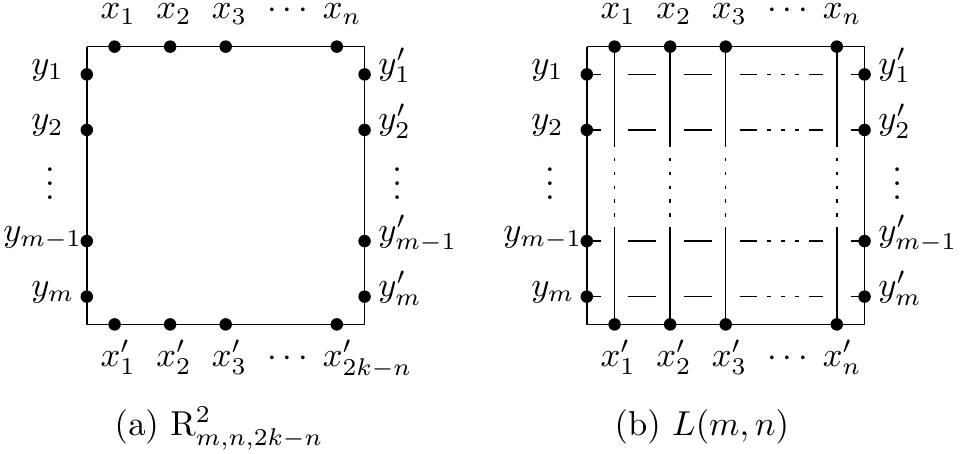}
\caption{Rectangle $\mathrm{R}^{2}_{m,n,2k-n}$ and lattice crossing $L(m,n)$}
\label{fig:LC_intro}
\end{figure}

\begin{definition}[\cite{DW2022}, Definition~2.1]
\label{def:states_intro}
A \emph{roof state} (\emph{floor state}) is a crossingless connection with no bottom (top) returns. A roof state with no top returns is called a \emph{middle state}. A roof state $R$ with $\mathrm{ht}(R) = 0$ is called a \emph{top state}, likewise, a floor state $F$ with $\mathrm{ht}(F) = 0$ is called a \emph{bottom state}. 
\end{definition}

A crossingless connection $C$ with $n_{t}(C) = n_{b}(C)$ is called a \emph{Catalan state} and we denote by $\mathrm{Cat}(m,n)$ the set of all such states in $\mathrm{R}^{2}_{m,n,n}$. A relative framed link $L(m,n)$ in cylinder $\mathrm{R}^{2}_{m,n,n} \times I$ with $2(m+n)$ points fixed on its boundary whose projection is shown in Figure~\ref{fig:LC_intro}(b) is called \emph{\textup{(}A-type\textup{)} lattice crossing}. As shown in \cite{Prz1999}, $\mathrm{Cat}(m,n)$ is a basis of the Relative Kauffman Bracket Skein Module of $\mathrm{R}^{2}_{m,n,n} \times I$, so
\begin{equation*}
L(m,n) = \sum_{C \in \mathrm{Cat}(m,n)} C(A) \, C,
\end{equation*}
where each $C(A) \in \mathbb{Z}[A^{\pm 1}]$ is called the \emph{coefficient} of Catalan state $C$.

A Kauffman state $s$ of $L(m,n)$ is an assignment of positive and negative markers to crossings of $L(m,n)$. For such $s$, let $D_{s}$ be the diagram obtained from $L(m,n)$ after applying rules shown in Figure~\ref{fig:markers_lhlv}(a) to smooth its crossings. Denote by $C_{s}$ the Catalan state resulting from $D_{s}$ after removing its closed components. We say that $C \in \mathrm{Cat}(m,n)$ is \emph{realizable} if $C = C_{s}$ for some Kauffman state $s$ of $L(m,n)$. 

\begin{figure}[ht] 
\centering
\includegraphics[scale=1]{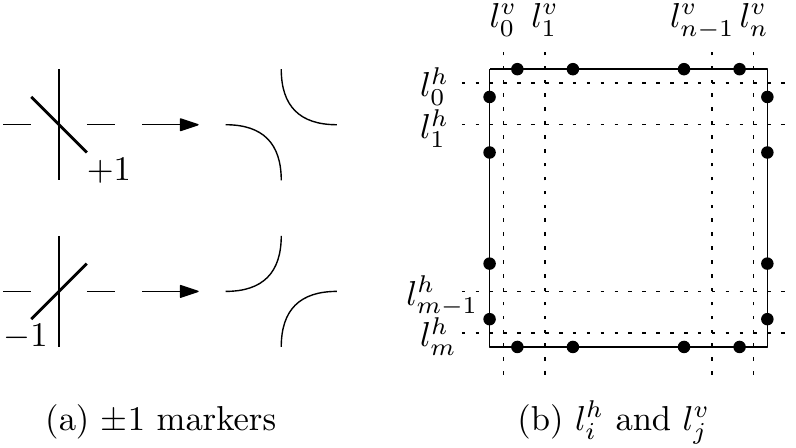}
\caption{$\pm 1$ markers, horizontal lines $l^{h}_{i}$ and vertical lines $l^{v}_{j}$}
\label{fig:markers_lhlv}
\end{figure}

Let $l^{h}_{i}$ and $l^{v}_{j}$ be horizontal and vertical lines in Figure~\ref{fig:markers_lhlv}(b). Denote by $\#(C \cap l)$ the geometric intersection number of a crossingless connection $C$ and a line $l$. Realizable Catalan states are characterized by the following results.

\begin{theorem}[\protect\cite{DLP2015}, Theorem~2.5] 
\label{thm:vh_line_condi}
A Catalan state $C \in \mathrm{Cat}(m,n)$ is realizable if and only if $\#(C \cap l^{h}_{i}) \leq n$ for $i = 1,2,\ldots,m-1$ and $\#(C \cap l^{v}_{j}) \leq m$ for $j = 1,2,\ldots,n-1$.
\end{theorem}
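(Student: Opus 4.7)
For the forward (necessity) direction, my plan is a direct counting argument. If $C = C_s$ for some Kauffman state $s$, the diagram $D_s$ is obtained from $L(m,n)$ by replacing each crossing with one of its two smoothings, a local modification that avoids the lines $l^{h}_{i}$ and $l^{v}_{j}$. Since $L(m,n)$ meets $l^{h}_{i}$ transversely in exactly $n$ points (one per vertical strand) and $l^{v}_{j}$ in exactly $m$ points (one per horizontal strand), so does $D_s$. The passage from $D_s$ to $C = C_s$ only discards closed components, which cannot raise geometric intersection numbers, so $\#(C \cap l^{h}_{i}) \leq n$ and $\#(C \cap l^{v}_{j}) \leq m$.

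For the converse (sufficiency), my plan is a constructive induction on $m$. The base case $m = 1$ handles $L(1,n)$, a single horizontal strand over $n$ vertical strands, where every $C \in \mathrm{Cat}(1,n)$ with $\#(C \cap l^{v}_{j}) \leq 1$ is directly realizable: at each of the $n$ crossings the state $C$ locally agrees with one of the two smoothings, which reads off the marker. For the inductive step, I would cut $\mathrm{R}^{2}_{m,n,n}$ along $l^{h}_{1}$ into a top strip $S_{1}$ containing the first row of crossings and a residual rectangle isomorphic to $\mathrm{R}^{2}_{m-1,n,n}$. Setting $k := \#(C \cap l^{h}_{1}) \leq n$, the sub-configuration $C \cap S_{1}$ has $n$ top-boundary points, $k$ endpoints on $l^{h}_{1}$, and the two boundary points of the absorbed horizontal strand; applying the base-case analysis to this thin rectangle yields a marker assignment $s_{1}$ on row $1$ of $L(m,n)$ whose smoothing recovers $C \cap S_{1}$. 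The complementary piece $C'$ below $l^{h}_{1}$ defines a Catalan state in $\mathrm{Cat}(m-1,n)$, to which the induction hypothesis applies, producing markers $s'$; the union $s = s_{1} \cup s'$ then realizes $C$.

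The main obstacle will be carrying out the inductive step in detail, and this splits into two closely related tasks. First, the existence of $s_{1}$ must be justified: one needs to show that the $n$ top-boundary points and $k$ endpoints on $l^{h}_{1}$ can always be matched in a planar way consistent with a marker assignment on $n$ crossings in a row, and it is precisely here that the hypothesis $\#(C \cap l^{h}_{1}) \leq n$ is essential. Second, one must verify the bound transfer $\#(C' \cap l^{v}_{j}) \leq m - 1$ required to invoke the induction hypothesis; the original bound $\#(C \cap l^{v}_{j}) \leq m$ must drop by exactly one when passing to $C'$, reflecting the fact that the top horizontal strand of $L(m,n)$ has been absorbed into $s_{1}$ and its contribution to vertical intersection counts removed. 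Making this accounting precise requires the joint use of both hypotheses on $C$ and a careful analysis of how arcs of $C$ cross $l^{v}_{j}$ inside $S_{1}$ versus below $l^{h}_{1}$. This simultaneous bookkeeping is, in my view, the combinatorial heart of the proof.
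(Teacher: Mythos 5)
This theorem is quoted from \cite{DLP2015} and is not proved in the present paper, so I am judging your argument against the standard row-by-row induction used there. Your necessity direction is fine: $D_{s}$ meets $l^{h}_{i}$ in $n$ points and $l^{v}_{j}$ in $m$ points because the smoothings are supported away from these lines, and deleting closed components and passing to geometric intersection numbers only decreases the counts. Your base case $m=1$ is also essentially correct, though you should record the small argument that an ``identity column'' (an arc $x_{j}x'_{j}$ with the horizontal strand passing through) is impossible: such an arc would enclose an odd number of boundary points, so parity plus the hypothesis forces each column to carry one of the two smoothings.

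The inductive step, however, contains a genuine gap rather than just unfinished bookkeeping. If $k=\#(C\cap l^{h}_{1})<n$, the piece of $C$ below $l^{h}_{1}$ has only $k$ top-boundary points, so it is \emph{not} a Catalan state in $\mathrm{Cat}(m-1,n)$ and the induction hypothesis does not apply to it. What the argument actually requires is a vertical factorization $C=R_{1}*C''$ in which $R_{1}$ is a full row state with $n$ bottom-boundary points realizable by one row of crossings and $C''\in\mathrm{Cat}(m-1,n)$; when $k<n$ this forces you to choose how the remaining $n-k$ strands of row $1$ are routed and matched into top returns of $C''$, and the existence of a choice for which $R_{1}$ is realizable, $\#(C''\cap l^{h}_{i})\leq n$, and $\#(C''\cap l^{v}_{j})\leq m-1$ all hold simultaneously is precisely the content of the theorem at this stage --- it is the missing idea, not a detail. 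Relatedly, your assertion that the vertical count ``must drop by exactly one'' is unjustified as stated: for the literal sub-tangle below $l^{h}_{1}$ nothing forces any drop (all $m$ crossings of $l^{v}_{j}$ could occur below the cut), and even for a factorization $C=R_{1}*C''$ the inequality $\#(C''\cap l^{v}_{j})\leq\#(C\cap l^{v}_{j})-1$ has to be arranged by the choice of $R_{1}$, since geometric intersection numbers are only subadditive under vertical products. Until the construction of $R_{1}$ and these two bound transfers are carried out, the sufficiency direction is not established.
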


\begin{theorem}[\protect\cite{DW2022}, Theorem~3.13] 
\label{thm:coef_nonzero}
A Catalan state $C$ is realizable if and only if $C(A) \neq 0$.
\end{theorem}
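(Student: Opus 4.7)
The plan is to prove the two directions separately, with the reverse implication being essentially definitional and the forward implication requiring a non-cancellation argument.

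For the direction $C(A) \neq 0 \Rightarrow C$ realizable, I would argue by contrapositive. The standard Kauffman bracket expansion of $L(m,n)$ gives
\[
L(m,n) \;=\; \sum_{s} A^{a(s)-b(s)}(-A^{2}-A^{-2})^{c(s)} \, C_{s},
\]
where the sum runs over all Kauffman states $s$ of $L(m,n)$, the integers $a(s), b(s)$ count positive and negative markers of $s$, and $c(s)$ denotes the number of closed components of $D_{s}$. Collecting terms indexed by the resulting Catalan state yields
\[
C(A) \;=\; \sum_{s \,:\, C_{s} = C} A^{a(s) - b(s)}(-A^{2}-A^{-2})^{c(s)},
\]
so if $C$ is not realizable the index set is empty and $C(A) = 0$.

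For the forward direction, suppose $C$ is realizable. The strategy is to single out a distinguished Kauffman state $s_{0}$ with $C_{s_{0}} = C$ whose contribution produces a monomial in $A$ of extremal degree that no other realizing state can match. A natural choice is a state $s_{0}$ whose smoothings follow $C$ monotonically across the lattice (such a state exists by the vh-line conditions of Theorem~\ref{thm:vh_line_condi}) and which produces no closed loops, so $c(s_{0}) = 0$. Its contribution is then the pure monomial $A^{a(s_{0}) - b(s_{0})}$, which I would argue attains the maximal $A$-degree among the contributions of all realizing states, and hence cannot be cancelled.

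The \emph{main obstacle} is to make this extremality precise: the factor $(-A^{2}-A^{-2})^{c(s')}$ shifts another state's top degree by any even amount, so one must track exactly how marker flips interact with loop creation. I would argue by an exchange process connecting any realizing $s'$ to $s_{0}$ via a sequence of local flips, bounding the net change in $a(s) - b(s) + 2c(s)$ strictly downward at each step. A second, more intrinsic approach aligned with the tools of the present paper would be to invoke the $\Theta_{A}$-state expansion from \cite{DW2022}: since the plucking polynomial of a plane rooted tree has constant term $1$ and is therefore never zero, the expansion directly realizes $C(A)$ as a nontrivial finite combination of such polynomials weighted by explicit powers of $A$, whose leading terms can be arranged to be mutually incompatible, forcing $C(A) \neq 0$ whenever at least one summand appears, i.e.\ whenever $C$ is realizable.
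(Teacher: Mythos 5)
This statement is imported verbatim from \cite{DW2022} (Theorem~3.13); the present paper gives no proof of it, so there is nothing internal to compare your argument against. Judged on its own merits, your reverse direction is fine: grouping the Kauffman state sum by resulting Catalan state immediately gives $C(A)=0$ when no state realizes $C$.

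The forward direction, however, has a genuine gap, and it is precisely the part that constitutes the whole content of the theorem. You propose a distinguished loop-free state $s_{0}$ whose monomial $A^{a(s_{0})-b(s_{0})}$ "attains the maximal $A$-degree among the contributions of all realizing states, and hence cannot be cancelled." Two problems. First, maximality is not enough: another realizing state $s'$ with $c(s')>0$ contributes $(-1)^{c(s')}$ times a spread of degrees and could attain the same top degree with opposite sign, so you need either \emph{unique} attainment of the extremal degree or a positivity argument, and you prove neither. Second, the proposed exchange process is not well defined: a single marker flip generically changes the resulting Catalan state $C_{s}$, so there is no canonical path of flips staying inside $\{s: C_{s}=C\}$ along which you could monotonically bound $a(s)-b(s)+2c(s)$; moreover a flip changes $a-b$ by $\pm2$ and $c$ by $\pm1$, so the quantity you track need not move strictly in one direction even for admissible flips. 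Even the existence of a loop-free "monotone" state realizing an arbitrary realizable $C$ is nontrivial (for states with no bottom returns it is the existence of a realizing sequence $\mathbf{b}$, and the non-cancellation there ultimately rests on the positivity of the coefficients of the plucking polynomial in Theorem~\ref{thm:coef_no_bot_rtn}, not on a single dominating monomial). Your fallback via the $\Theta_{A}$-state expansion is closer to the actual machinery of \cite{DW2022}, but as stated it is circular in spirit (that expansion has coefficients in $\mathbb{Q}(A)$ and its construction in \cite{DW2022} already presupposes control of which coefficients vanish), and "leading terms can be arranged to be mutually incompatible" is an assertion, not an argument. As written, the proposal identifies the obstacle correctly but does not overcome it.
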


For $C \in \mathrm{Cat}(m,n)$ with no bottom returns, let $(T(C),v_{0},\alpha)$ be the plane rooted tree with root $v_{0}$ and delay $\alpha$ defined from the set of leaves of $T(C)$ (not including $v_{0}$) to $\mathbb{N}$ by $\alpha(v) = k$ if $v$ corresponds to a left or a right return with its lower end $y_{k}$ or $y'_{k}$ and $\alpha(v) = 1$ otherwise (see Figure~\ref{fig:rooted_tree} and \cite{DP2019} for more details).

\begin{figure}[ht] 
\centering
\includegraphics[scale=1]{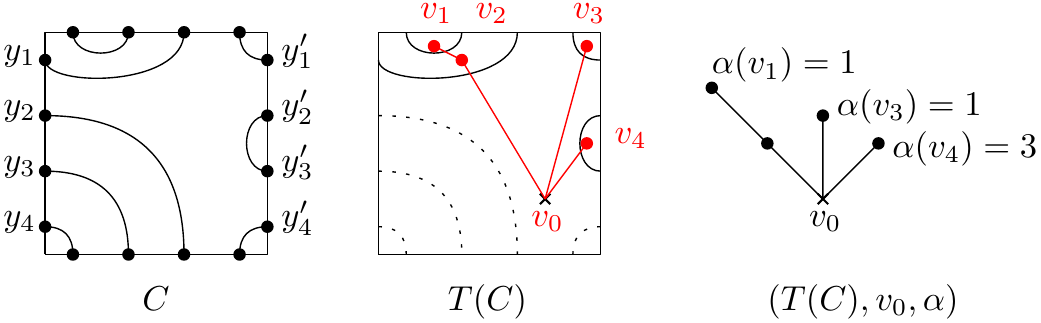}
\caption{Plane rooted tree $(T(C),v_{0})$ with delay $\alpha$}
\label{fig:rooted_tree}
\end{figure}

\begin{definition}[\cite{DP2019}, Definition~3.1] 
\label{def:q_poly}
Let $(T,v_{0},\alpha)$ be a plane rooted tree $T$ with root $v_{0}$ and weight function $\alpha$ defined on leaves of $T$ (not including $v_{0}$) to $\mathbb{N}$. Denote by $L_{1}(T)$ the set of all leaves $v$ of $T$ with $\alpha(v) = 1$. The \emph{plucking polynomial} $Q_{q}$ of $(T,v_{0},\alpha)$ is a polynomial in variable $q$ which we set $Q_{q}(T,v_{0},\alpha) = 1$ for $T$ with no edges, and otherwise,
\begin{equation*}
Q_{q}(T,v_{0},\alpha) = \sum_{v \in L_{1}(T)} q^{r(T,v_{0},v)} Q_{q}(T-v,v_{0},\alpha_{v}),
\end{equation*}
where $r(T,v_{0},v)$ is the number of vertices of $T$ to the right of the unique path from $v_{0}$ to $v$, and $\alpha_{v}$ is defined by $\alpha_{v}(u) = \max\{1,\alpha(u)-1\}$ if $u$ is a leaf of $T$ and $\alpha_{v}(u) = 1$ if $u$ is a new leaf of $T-v$.
\end{definition}

For a realizable Catalan state $C \in \mathrm{Cat}(m,n)$ with no bottom returns, $C(A)$ can be found using Kauffman states of $L(m,n)$ with markers
\begin{equation*}
(\underset{b_{j}}{\underbrace{1,1,\ldots,1}},\underset{n-b_{j}}{\underbrace{-1,-1,\ldots,-1}})
\end{equation*}
in each $j$-th row ($1 \leq j \leq m$), i.e., using sequences $\mathbf{b} = (b_{1},b_{2},\ldots,b_{m})$, where $0 \leq b_{j} \leq n$ (see also \cite{DP2019}). We say that $\mathbf{b}$ \emph{realizes} $C$ if the Kauffman state corresponding to $\mathbf{b}$ realizes $C$. Let $\mathfrak{b}(C)$ be the set of $\mathbf{b}$'s that realize $C$ and let $\Vert\mathbf{b}\Vert = b_{1} + b_{2} + \cdots + b_{m}$. Define
\begin{equation*}
\beta(C) = \max\{\Vert\mathbf{b}\Vert: \mathbf{b} \in \mathfrak{b}(C)\}
\end{equation*}
and note that there is a unique $\mathbf{b} \in \mathfrak{b}(C)$, called the \emph{maximal sequence} of $C$, such that $\beta(C) = \Vert\mathbf{b}\Vert$.

\begin{theorem}[\cite{DP2019}, Theorem~3.4] 
\label{thm:coef_no_bot_rtn}
Let $C \in \mathrm{Cat}(m,n)$ be a realizable Catalan state with no bottom returns, and let $T = T(C)$ be its corresponding plane rooted tree with root $v_{0}$ and delay $\alpha$. Then 
\begin{equation*}
C(A) = A^{2\beta(C)-mn} \, Q_{A^{-4}}^{*}(T,v_{0},\alpha),
\end{equation*}
where $Q_{q}^{*}(T,v_{0},\alpha) = q^{-\min\deg_{q} Q_{q}(T,v_{0},\alpha)} \, Q_{q}(T,v_{0},\alpha)$.
\end{theorem}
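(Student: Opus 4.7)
The plan is to use the Kauffman bracket skein expansion of $L(m,n)$ and reduce, via a restriction on which marker patterns contribute, to a sum indexed by $\mathfrak{b}(C)$; then to match that sum with the plucking polynomial $Q^{*}_{A^{-4}}(T,v_{0},\alpha)$ by induction on the structure of $T = T(C)$. Since
$$L(m,n) \;=\; \sum_{s} A^{|s|_{+}-|s|_{-}} (-A^{2}-A^{-2})^{|s|_{\bigcirc}} \, C_{s}$$
in the relative Kauffman bracket skein module, the coefficient $C(A)$ equals the sum of $A^{|s|_{+}-|s|_{-}} (-A^{2}-A^{-2})^{|s|_{\bigcirc}}$ over those Kauffman states $s$ with $C_{s}=C$. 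The first task is to identify those $s$ and evaluate their contribution; the second task is to repackage the resulting sum as a plucking polynomial.

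The first step is to show that when $C$ has no bottom returns, only Kauffman states $s$ whose $j$-th row of markers has the \emph{row-monotonic} form $(\underbrace{1,\ldots,1}_{b_{j}},\,\underbrace{-1,\ldots,-1}_{n-b_{j}})$ can satisfy $C_{s}=C$, and that every such state produces no closed components. Intuitively, any adjacent ``$-1$ then $+1$'' pair in a single row of markers produces a downward-opening arc whose two feet, when propagated through the rows below, must eventually close through a bottom return; a row-by-row scan from the top makes this precise and forces row monotonicity. A row-monotonic smoothing then consists only of arcs tethered to the boundary, so $|s|_{\bigcirc}=0$. Combining these,
$$C(A) \;=\; \sum_{\mathbf{b} \in \mathfrak{b}(C)} A^{\sum_{j}(2b_{j}-n)} \;=\; A^{2\beta(C)-mn}\sum_{\mathbf{b} \in \mathfrak{b}(C)} A^{-2(\beta(C)-\|\mathbf{b}\|)}.$$

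It remains to identify the last sum with $Q^{*}_{A^{-4}}(T,v_{0},\alpha)$. The plan is to set up a bijection between $\mathfrak{b}(C)$ and the set of \emph{plucking sequences} on $(T,v_{0},\alpha)$, i.e., orderings of the non-root leaves produced by iteratively removing a leaf $v$ with current delay $1$ while updating $\alpha$ by the rule from Definition~\ref{def:q_poly}. Geometrically, a leaf $v$ with $\alpha(v)=1$ corresponds to an arc $a$ of $C$ that can be peeled off to yield a smaller realizable Catalan state $C' = C\smallsetminus a$ with no bottom returns and tree $T(C') = T - v$ carrying delay $\alpha_{v}$. Under this bijection, I would verify the recursive identity
$$A^{-2(\beta(C)-\|\mathbf{b}\|)} \;=\; A^{-4\,r(T,v_{0},v)} \cdot A^{-2(\beta(C')-\|\mathbf{b}'\|)},$$
where $\mathbf{b}'$ is the sequence assigned to $C'$ under the bijection. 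Matching term-by-term against the recursion defining $Q_{q}$ at $q=A^{-4}$ and inducting on the number of edges of $T$ then gives the theorem.

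The main obstacle is establishing this recursive identity. One has to show that peeling off the arc $a$ corresponding to a leaf $v$ with $\alpha(v)=1$ decreases $\beta - \|\mathbf{b}\|$ by exactly $2\,r(T,v_{0},v)$, and therefore to identify $r(T,v_{0},v)$ with a geometric count, namely the number of crossings of $L(m,n)$ lying to the right of $a$ that are absorbed into the reduction. One then has to track how left and right returns with delay $\alpha(v)=k$ (that is, returns with lower endpoint $y_{k}$ or $y'_{k}$) interact with the update $\alpha_{v}(u)=\max\{1,\alpha(u)-1\}$, so that the side-return weightings are correctly passed to $T(C')$. This combinatorial bookkeeping, together with the verification that the peeling operation preserves realizability of the reduced state, constitutes the technical heart of the proof.
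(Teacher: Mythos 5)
This theorem is not proved in the present paper: it is imported from \cite{DP2019} (Theorem~3.4 there), so there is no in-paper argument to compare against. Your outline does follow the strategy of the original proof: first cut the Kauffman-bracket state sum down to the row-monotone states, i.e.\ to $\mathfrak{b}(C)$, obtaining $C(A)=A^{2\beta(C)-mn}\sum_{\mathbf{b}\in\mathfrak{b}(C)}A^{-2(\beta(C)-\Vert\mathbf{b}\Vert)}$, and then identify the remaining sum with $Q^{*}_{A^{-4}}(T,v_{0},\alpha)$ through a bijection between realizing sequences and plucking orders of $(T,v_{0},\alpha)$. That is the right architecture.

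As a proof, however, the proposal leaves both of its load-bearing steps unestablished. For the reduction step you must show two separate things: that an adjacent $(-1,+1)$ pair in some row forces $C_{s}$ to have a bottom return, and that no state $s$ with $C_{s}=C$ produces closed components --- the latter is essential, since a state with circles but $C_{s}=C$ would contribute $(-A^{2}-A^{-2})^{|s|_{\bigcirc}}$ factors that the claimed formula visibly does not contain; the ``row-by-row scan'' you invoke is precisely this nontrivial lemma, not a routine observation. For the second step, the recursive identity $(\beta(C)-\Vert\mathbf{b}\Vert)-(\beta(C')-\Vert\mathbf{b}'\Vert)=2\,r(T,v_{0},v)$, together with the claims that the peeled state $C'=C\smallsetminus a$ is again a realizable Catalan state with no bottom returns, that its tree is $T-v$ carrying exactly the updated delay $\alpha_{v}$, and that $\beta(C)-\Vert\mathbf{b}\Vert$ is always even (so the sum really is a polynomial in $A^{-4}$), is the entire content of the theorem, and you present it as something ``to verify.'' In particular, the compatibility of the update $\alpha_{v}(u)=\max\{1,\alpha(u)-1\}$ with the fact that deleting the top row shifts the lower ends of all remaining side returns up by one unit is exactly where the delay function does its work, and nothing in the proposal establishes it. So: correct plan, same route as \cite{DP2019}, but the technical heart is missing on both halves.
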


Let $\varphi_{n}$ be a map from the set $\mathcal{B}_{n}$ of all bottom states $F$ with $n_{b}(F) = n$ to the set $\mathrm{Fin}(\mathbb{N})$ of all finite subsets of $\mathbb{N}$ defined by 
\begin{equation*}
\varphi_{n}(F) = \{i_{1},i_{2},\ldots,i_{k}\},
\end{equation*}
where $i_{1} < i_{2} < \cdots < i_{k}$ are indices of the left ends of bottom returns of $F$ (see \cite{DW2022} for more details). Denote by $\mathcal{L}_{n}$ the image $\varphi_{n}(\mathcal{B}_{n})$ of $\varphi_{n}$ and note that, since $\varphi_{n}$ is injective, for each $I \in \mathcal{L}_{n}$ there is a unique $\varphi_{n}^{-1}(I) \in \mathcal{B}_{n}$ (see example in Figure~\ref{fig:phi_inverse} for $I = \{i\}$).

\begin{figure}[ht] 
\centering
\includegraphics[scale=1]{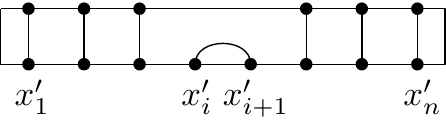}
\caption{$\varphi_{n}^{-1}(\{i\})$ for $1 \leq i \leq n-1$}
\label{fig:phi_inverse}
\end{figure}

For a pair of crossingless connections $C_{1}$ and $C_{2}$, their \emph{vertical product} $C_{1} * C_{2}$ is a crossingless connection in Figure~\ref{fig:I_oplus_J}(a) if $n_{b}(C_{1}) = n_{t}(C_{2})$ and no closed components are created. Otherwise, $C_{1} * C_{2}$ is not a crossingless connection and we set $C_{1} * C_{2} = K_{0}$ and, in addition, for $K_{0}$ we put
\begin{equation*}
K_{0} * C_{2} = C_{1} * K_{0} = K_{0} * K_{0} = K_{0}.
\end{equation*}
If $I \neq \emptyset$ and $I \in \mathrm{Fin}(\mathbb{N})$, it is clear that $I \in \mathcal{L}_{\max I+|I|}$. Hence, there is a non-negative integer $n_{I}$ such that, $I \in \mathcal{L}_{n}$ for $n \geq n_{I}$ and $I \notin \mathcal{L}_{n}$ for $n < n_{I}$. As in \cite{DW2022}, we define operation $\oplus$ on $\mathrm{Fin}(\mathbb{N})$ as follows. For $I,J \in \mathrm{Fin}(\mathbb{N})$,
\begin{equation*}
I \oplus J = \varphi_{n^{*}}(\varphi_{n^{*}-2|J|}^{-1}(I) * \varphi_{n^{*}}^{-1}(J)),
\end{equation*}
where $n^{*} = \max\{n_{I}+2|J|,n_{J}\}$. The geometric interpretation of $I \oplus J$ is shown in Figure~\ref{fig:I_oplus_J}(b).

\begin{figure}[ht] 
\centering
\includegraphics[scale=1]{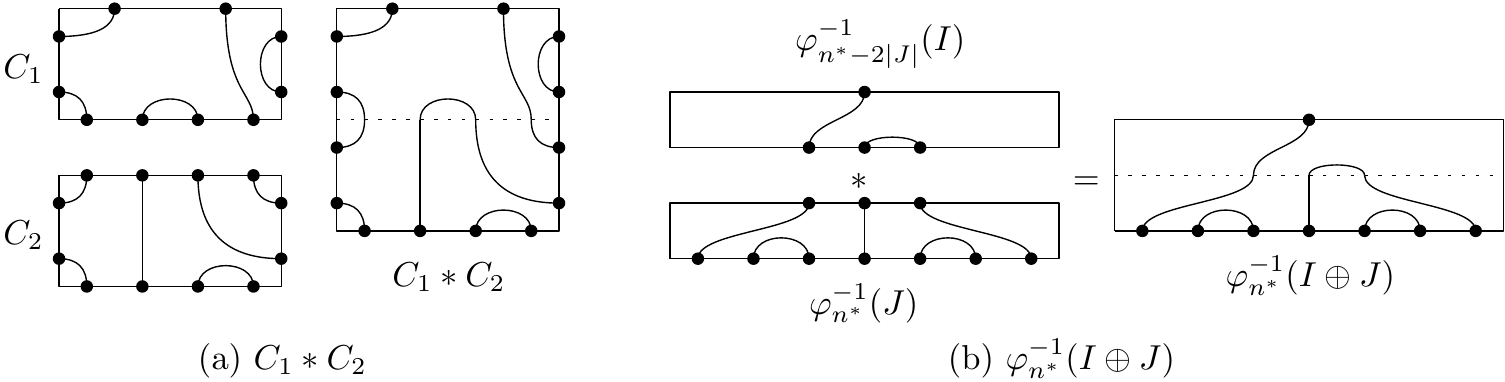}
\caption{Vertical product of $C_{1}$ and $C_{2}$, geometric interpretation of $I \oplus J$}
\label{fig:I_oplus_J}
\end{figure}

Given a crossingless connection $C$ with $n_{b}(C) = n$ or $C = K_{0}$ and $I \in \mathrm{Fin}(\mathbb{N})$, define $C_{I} = C'$ if $C$ is a crossingless connection, $I \in \mathcal{L}_{n}$, and there is a crossingless connection $C'$ such that
\begin{equation*}
C = C' * \varphi_{n}^{-1}(I),
\end{equation*}
and in all other cases we set $C_{I} = K_{0}$ (see Figure~\ref{fig:C_I}). 

\begin{figure}[ht] 
\centering
\includegraphics[scale=1]{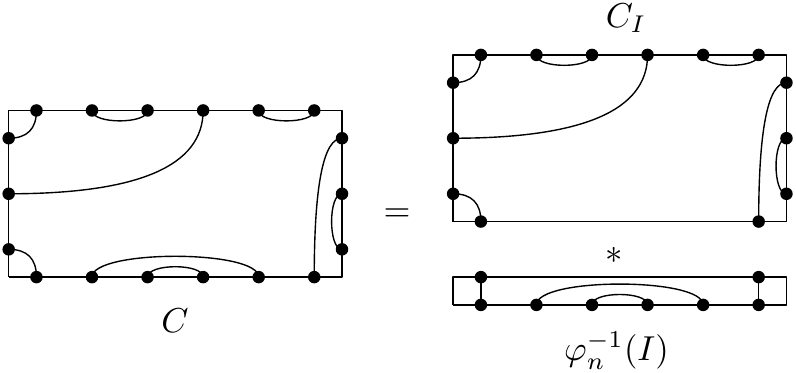}
\caption{$C_{I}$}
\label{fig:C_I}
\end{figure}

Let $\mathcal{W}$ be the set consisting of all pairs $(R,I)$, where $R$ is a roof state and $I \in \mathrm{Fin}(\mathbb{N})$. Given $(R,I) \in \mathcal{W}$, as in \cite{DW2022} we let $\Theta_{A}(R,I;\cdot)$ to be a function of variable $F$ defined by
\begin{equation*}
\Theta_{A}(R,I;F) = \begin{cases}
C(A), & \text{if}\ C = R * F_{I}\ \text{is a Catalan state}, \\
0,    & \text{otherwise},
\end{cases}
\end{equation*}
where $F$ is a floor state or $F = K_{0}$. In this paper, we use an equivalent version of $\Theta_{A}$-state expansion defined as below.

\begin{definition}[\protect\cite{DW2022}, Definition~4.1] 
\label{def:Theta_state_expansion}
A \emph{$\Theta_{A}$-state expansion} for $(R,I) \in \mathcal{W}$ is a relation given by
\begin{equation*}
\Theta_{A}(R,I;\cdot) = \sum_{(R',I') \in \mathcal{P}'} Q_{R',I'}(A) \, \Theta_{A}(R',I' \oplus I;\cdot),
\end{equation*}
where $\mathcal{P}'$ is a finite subset of $\mathcal{W}$ such that each $(R',I') \in \mathcal{P}'$ satisfies conditions
\begin{enumerate}
\item[i)] $I' \in \mathcal{L}_{n_{t}(R)}$,
\item[ii)] $R'$ is a middle state with $n_{t}(R') = n_{t}(R)-2|I'|$ and $n_{b}(R') = n_{b}(R)$,
\end{enumerate}
and $0 \neq Q_{R',I'}(A) \in \mathbb{Q}(A)$ for every $(R',I') \in \mathcal{P}'$.
\end{definition}

Since each pair $(R',I') \in \mathcal{P}'$ consists of a middle state $R'$ and $I' \in \mathrm{Fin}(\mathbb{N})$, for every floor state $F$ either $R'*F_{I' \oplus I}$ is a Catalan state with no top returns or it is not a Catalan state. Consequently, if a $\Theta_{A}$-state expansion for $(R,I)$ exists, $\Theta_{A}(R,I;F)$ can be expressed as a sum of $Q_{R',I'}(A) \, \Theta_{A}(R',I' \oplus I;F)$, where either $\Theta_{A}(R',I' \oplus I;F) \neq 0$ can be found using Theorem~\ref{thm:coef_no_bot_rtn} or $\Theta_{A}(R',I' \oplus I;F) = 0$.

\begin{theorem}[\protect\cite{DW2022}, Theorem~4.7] 
\label{thm:main}
Every $(R,I) \in \mathcal{W}$ has a $\Theta_{A}$-state expansion.
\end{theorem}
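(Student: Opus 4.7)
The plan is strong induction on the number $t(R)$ of top returns of the roof state $R$. For the base case $t(R) = 0$, the roof state $R$ is itself a middle state, and the trivial expansion $\mathcal{P}'=\{(R,\emptyset)\}$ with $Q_{R,\emptyset}(A)=1$ satisfies Definition~\ref{def:Theta_state_expansion}: $\emptyset \in \mathcal{L}_{n_t(R)}$ (as the image under $\varphi_{n_t(R)}$ of the bottom state consisting of $n_t(R)$ vertical top-to-bottom arcs), the dimensional constraints $n_t(R) - 2|\emptyset| = n_t(R)$ and $n_b(R) = n_b(R)$ are trivial, and $\emptyset \oplus I = I$, so the required identity reduces to a tautology.

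For the inductive step with $t(R) \geq 1$, I would choose an innermost top return $c$ of $R$, i.e., a top return whose enclosed disk contains no other arcs of $R$; such a $c$ exists because a finite family of nested top returns always has an innermost member. The heart of the argument is a one-step reduction that produces a finite family of pairs $(R_k, I_k)$ with $t(R_k) < t(R)$ and nonzero $Q_k(A) \in \mathbb{Q}(A)$ such that
\begin{equation*}
\Theta_A(R, I; F) \;=\; \sum_k Q_k(A)\, \Theta_A(R_k, I_k \oplus I; F)
\end{equation*}
for every floor state $F$ (and $F = K_0$). Once this is in hand, applying the inductive hypothesis to each $(R_k, I_k \oplus I)$ yields a $\Theta_A$-expansion in terms of middle states, and composing via the associativity of $\oplus$ gives the desired expansion for $(R, I)$.

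To construct the one-step reduction, I would perform a local surgery near the innermost top return $c$. Because $c$ is innermost, the arcs of $R$ that pass horizontally through the region immediately beneath $c$ have a simple structure, and swapping $c$ with the topmost such underlying arc produces a new crossingless connection with one fewer top return. Several subcases arise depending on whether the endpoints of the underlying arc lie on bottom-, left-, or right-boundary points, and the resulting bottom-return pattern (when created) is absorbed into $I_k$. The coefficients $Q_k(A)$ come from Kauffman bracket smoothings at the relevant crossings of $L(m,n)$ together with factors of $-A^2 - A^{-2}$ whenever the surgery produces a closed loop.

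The main obstacle is verifying the one-step reduction identity for \emph{all} floor states $F$, including those degenerate cases in which $R * F_I$ is not a Catalan state (so that the left-hand side is $0$). For such $F$ one must show that the right-hand side vanishes as well, which requires a careful comparison, via Theorems~\ref{thm:vh_line_condi} and~\ref{thm:coef_nonzero}, of the realizability conditions for $R * F_I$ and for each $R_k * F_{I_k \oplus I}$. A further delicate point is confirming that every surgery output satisfies $I_k \in \mathcal{L}_{n_t(R)}$, $n_t(R_k) = n_t(R) - 2|I_k|$, and $n_b(R_k) = n_b(R)$, so that conditions (i) and (ii) of Definition~\ref{def:Theta_state_expansion} are preserved throughout the induction.
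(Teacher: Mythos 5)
Your base case and the formal bookkeeping (the trivial expansion for a middle state, associativity of $\oplus$, and preservation of conditions i)--ii) of Definition~\ref{def:Theta_state_expansion} under composition) are fine, but the inductive step rests entirely on a ``one-step reduction'' that you assert rather than establish, and that reduction is precisely the hard content of the theorem. A roof state $R$ is a crossingless connection carrying no crossings, and $\Theta_{A}(R,I;F)$ is a coefficient in the global skein expansion of $L(m,n)$; there is no a priori reason why a local surgery on $R$ (swapping an innermost top return with the arc beneath it) should yield a linear identity among the functions $\Theta_{A}(\cdot,\cdot\,;\cdot)$, and ``Kauffman bracket smoothings at the relevant crossings'' does not apply, since the crossings live in $L(m,n)$, not in $R$. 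The relations that actually exist are obtained by decomposing $L(m,n)$ as a vertical product and expanding rows of the lattice crossing (Proposition~3.11 of \cite{DW2022}, used in this paper in the proofs of Lemma~\ref{lem:theta_state_expn_new} and Theorem~\ref{thm:vertical_factor_thm}); these relate $(R,I)$ to pairs $(R',I')$ of smaller \emph{height} or smaller $n_{t}$, not to pairs with fewer top returns, so the number of top returns is not the parameter that decreases under the available moves.

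A second, independent symptom of the gap: your proposed coefficients (smoothing weights times powers of $-A^{2}-A^{-2}$) would all lie in $\mathbb{Z}[A^{\pm 1}]$, whereas the expansion genuinely requires coefficients in $\mathbb{Q}(A)$. Once $R$ has been reduced to a top state that still has top returns, one must \emph{solve} a linear system of skein relations for $\Theta_{A}(R,I;\cdot)$; this is where rational coefficients such as $Q_{C_{0},I}(A)=R(A)/C_{0}(A)$ and the coefficients supplied by Lemma~4.3 of \cite{DW2022} enter in the proof of Lemma~\ref{lem:theta_state_expn_new}, which reproves a strengthened form of the cited theorem by induction on $n_{t}(R)$ with auxiliary inductions on $\mathrm{ht}(R)$ and on the parameters $u$, $j$, $v$ attached to top states. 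As written, your proposal assumes the key identity it needs to prove; to repair it you would have to derive the surgery identity from genuine skein relations and verify it for all floor states $F$, which is essentially the content of Proposition~3.11 and Lemma~4.3 of \cite{DW2022}.
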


Since any Catalan state $C$ can be written as a vertical product of a roof state $R$ and a floor state $F$, i.e., $C = R*F$. Therefore, $C(A) = \Theta_{A}(R,\emptyset;F)$ can be found using $\Theta_{A}$-state expansion for $(R,\emptyset)$. In particular, $C(A)$ is a linear combination over $\mathbb{Q}(A)$ of coefficients of Catalan states with no top returns.

\section{Factorization of Plucking Polynomial}
\label{s:factor_plucking_poly}

In this section, we prove that plucking polynomial of a plane rooted tree $(T,v_{0},\alpha)$ factors (see Theorem~\ref{thm:prod_formula_plucking}) into a product of plucking polynomials of a ``splitting subtree'' (see Definition~\ref{def:splitting_subtree}) and its ``complementary tree'' (see Definition~\ref{def:complementary_tree}). A special version of this property for plucking polynomial was known earlier by J.H.~Przytycki (see Remark~\ref{rem:prod_formula_plucking2}).

Let $V(T)$ be the set of vertices of a plane rooted tree $(T,v_{0})$ and $v \in V(T)$. Denote by $T_{v}$ the \emph{subtree of $T$ above $v$}, i.e., $T_{v}$ is the plane rooted subtree of $T$ induced by all vertices $u \in V(T)$ which can be connected to $v_{0}$ via a simple path that includes $v$ as its vertex. The \emph{ordered rooted sum} of plane rooted trees $(T_{1},v_{1})$ and $(T_{2},v_{2})$ is the plane rooted tree $(T_{1} \vee T_{2},v_{0})$ shown in Figure~\ref{fig:ordered_rooted_sum}.

\begin{figure}[ht]
\centering
\includegraphics[scale=1]{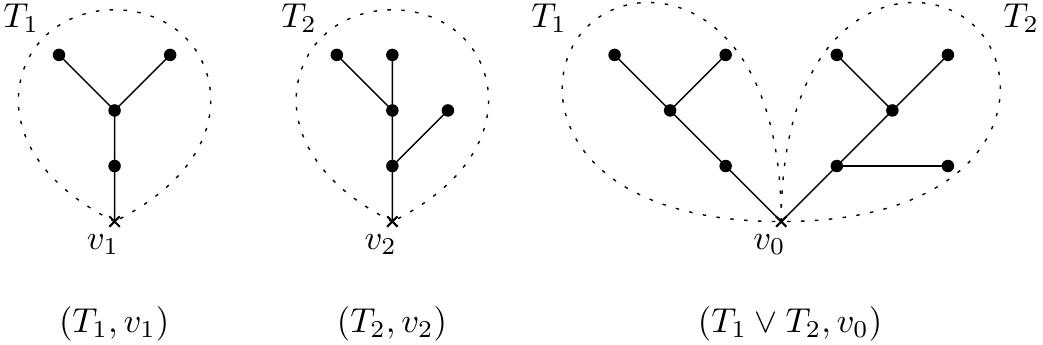}
\caption{Ordered rooted sum $(T_{1} \vee T_{2},v_{0})$ of $(T_{1},v_{1})$ and $(T_{2},v_{2})$}
\label{fig:ordered_rooted_sum}
\end{figure}

\begin{definition}
\label{def:splitting_subtree}
Let $(T,v_{0},\alpha)$ be a plane rooted tree with a weight function $\alpha$ from the set of its leaves $L(T)$ (not including root) to $\mathbb{N}$. A plane rooted tree $(T',v',\alpha')$ is called a \emph{splitting subtree} of $(T,v_{0},\alpha)$ if
\begin{enumerate}
\item[i)] $v' \in V(T)$, $T_{v'} = T_{l} \vee T' \vee T_{r}$ for some trees $T_{l}$ and $T_{r}$, $\alpha'$ is the restriction of $\alpha$ to $L(T')$, and
\item[ii)] $\alpha'(v) \leq \alpha(u)$ for every $v \in L(T')$ and $u \in L(T) \setminus L(T')$.
\end{enumerate}
\end{definition}

\begin{definition}
\label{def:complementary_tree}
Let $(T',v',\alpha')$ be a splitting subtree of $(T,v_{0},\alpha)$ and let $P_{k}$ be a plane rooted path of length $k = |V(T')|-1$ with one of its leaves fixed as the root. A plane rooted tree $(T'',v_{0},\alpha'')$ is called a \emph{complementary tree} for $(T',v',\alpha')$ if
\begin{enumerate}
\item[i)] $T''$ is obtained from $T$ by replacing $T_{v'} = T_{l} \vee T' \vee T_{r}$ by the tree $T_{l} \vee P_{k} \vee T_{r}$ (see Figure~\ref{fig:prod_formula_plucking}), and
\item[ii)] $\alpha''$ is defined by $\alpha''(v) = \alpha(v)$ if $v \in L(T'') \cap L(T)$ and $\alpha''(v) = 1$ if $v \in L(T'') \setminus L(T)$.
\end{enumerate}
\end{definition}

\begin{figure}[ht]
\centering
\includegraphics[scale=1]{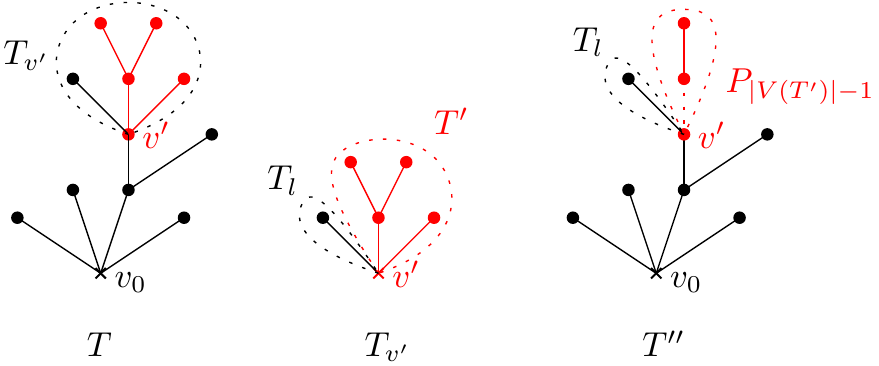}
\caption{Trees $T$, $T_{v'}$, and $T''$}
\label{fig:prod_formula_plucking}
\end{figure}

\begin{lemma}
\label{lem:prod_formula_plucking_1}
Let $(T',v',\alpha')$ be a splitting subtree of $(T,v_{0},\alpha)$ and let $(T'',v_{0},\alpha'')$ be the corresponding complementary tree. Assume that $L_{1}(T') \neq \emptyset$ and denote by $u$ the unique leaf in $L(T'') \setminus L(T)$. Then, for every $v \in L_{1}(T')$
\begin{enumerate}
\item[i)] $(T'-v,v',\alpha'_{v})$ is a splitting subtree of $(T-v,v_{0},\alpha_{v})$ and
\item[ii)] $(T''-u,v_{0},\alpha''_{u})$ is the complementary tree for $(T'-v,v',\alpha'_{v})$,
\end{enumerate}
where $\alpha_{v}$, $\alpha'_{v} = (\alpha')_{v}$, and $\alpha''_{u} = (\alpha'')_{u}$ are as in Definition~\ref{def:q_poly}.
\end{lemma}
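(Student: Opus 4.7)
The plan is to verify directly that $(T'-v, v', \alpha'_{v})$ satisfies the conditions of Definition~\ref{def:splitting_subtree} for being a splitting subtree of $(T-v, v_{0}, \alpha_{v})$, and that $(T''-u, v_{0}, \alpha''_{u})$ then fits the prescription of Definition~\ref{def:complementary_tree} for its complementary tree. First, because $v \in L_{1}(T') \subseteq L(T')$ and $v'$ is, by convention, not counted among the leaves of $T'$, we have $v \neq v'$, so $v' \in V(T-v)$. Moreover, since $v \in V(T')$, pruning $v$ from $T$ only affects the middle component of the decomposition $T_{v'} = T_{l} \vee T' \vee T_{r}$, which yields $(T-v)_{v'} = T_{l} \vee (T'-v) \vee T_{r}$ with the same $T_{l}, T_{r}$.

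Next, to verify that $\alpha'_{v}$ is the restriction of $\alpha_{v}$ to $L(T'-v)$, I would split into two cases depending on the type of leaf $w \in L(T'-v)$. If $w$ is an original leaf of $T'$ distinct from $v$, then $w$ is automatically a leaf of $T$ (because the descendants in $T$ of any non-root vertex of $T'$ remain inside $T'$), so both $\alpha'_{v}(w)$ and $\alpha_{v}(w)$ equal $\max\{1, \alpha(w)-1\}$. If $w$ is a new leaf of $T'-v$, then $w$ is the parent of $v$, and the same containment argument shows $w$ is also a new leaf of $T-v$, so both weights equal $1$. The inequality $\alpha'_{v}(w) \leq \alpha_{v}(u)$ for $u \in L(T-v) \setminus L(T'-v)$ then follows by handling the four possibilities (original versus new leaf of each) in turn: the original-versus-original case reduces to $\alpha'(w) \leq \alpha(u)$ from the splitting-subtree hypothesis combined with the monotonicity of the map $x \mapsto \max\{1, x-1\}$, while the cases involving new leaves reduce to the trivial bound $\alpha'_{v}(w) = 1$ or $\alpha_{v}(u) \geq 1$.

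For part ii), the structural replacement is immediate: since $u$ is the unique leaf of $P_{k}$ other than its root, removing $u$ converts $P_{k}$ into $P_{k-1}$, and thus $(T''-u)_{v'} = T_{l} \vee P_{k-1} \vee T_{r}$ with $k-1 = |V(T'-v)| - 1$, as required. To see that $\alpha''_{u}$ matches the weight function prescribed by Definition~\ref{def:complementary_tree}, I would use the identity $L(T'') = (L(T) \setminus L(T')) \cup \{u\}$ and distinguish whether $w \in L(T''-u)$ is an original or new leaf of $T''$, against whether it lies in $L(T-v)$; in each of the resulting subcases both the formula for $\alpha''_{u}$ coming from Definition~\ref{def:q_poly} and the formula from Definition~\ref{def:complementary_tree} collapse to the same value.

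The main bookkeeping obstacle I anticipate is the handful of degenerate configurations---such as $k = 1$ combined with $T_{l}, T_{r}$ trivial, or $v' = v_{0}$---where the vertex $v'$ may become a leaf of one of the pruned trees but not of the other. These situations must be checked individually, but in each one the relevant leaves reduce to $v'$ and both weight functions evaluate to $1$, so the required agreement still holds.
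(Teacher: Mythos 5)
Your proposal is correct and follows essentially the same route as the paper's proof: a direct verification that pruning $v$ preserves the decomposition $(T-v)_{v'} = T_{l} \vee (T'-v) \vee T_{r}$, that $\alpha'_{v}$ restricts $\alpha_{v}$ via the case split on old versus new leaves and the computation $\max\{1,\alpha(w)-1\}$, and that removing $u$ turns $P_{k}$ into $P_{k-1}$ with matching weights. The degenerate configurations you flag (where $v'$ could become a leaf of one pruned tree but not the other) are also glossed over in the paper's proof with an ``analogous'' remark, and in each such case $L(T'-v)$ is empty or excludes $v'$ as a root, so the required conditions hold vacuously as you indicate.
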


\begin{proof}
For i), we notice that, for every $v \in L_{1}(T')$ either both $T-v$ and $T'-v$ have the same new leaf $u'$ or they both do not have such a leaf. As the latter case is analogous, we only prove the former case. Since $T_{v'} = T_{l} \vee T' \vee T_{r}$ for some trees $T_{l}$ and $T_{r}$, it follows that
\begin{equation*}
(T-v)_{v'} = T_{l} \vee (T'-v) \vee T_{r}.    
\end{equation*}
Furthermore, for all $w \in L(T'-v) \setminus \{u'\}$
\begin{equation*}
\alpha'_{v}(w) = \max\{\alpha'(w)-1,1\} = \max\{\alpha(w)-1,1\} = \alpha_{v}(w),
\end{equation*}
and $\alpha'_{v}(u') = 1 = \alpha_{v}(u')$, so $\alpha'_{v}$ is a restriction of $\alpha_{v}$ to $L(T'-v)$. Finally,
\begin{equation*}
\alpha'_{v}(u') = 1 \leq \alpha'_{v}(w) = \max\{\alpha'(w)-1,1\} \leq \max\{\alpha(w')-1,1\} = \alpha_{v}(w')
\end{equation*}
for all $w \in L(T'-v) \setminus \{u'\}$ and $w' \in L(T-v) \setminus L(T'-v)$. Therefore, i) follows.

For ii), let $k = |V(T')|-1$ and, since $T''_{v'} = T_{l} \vee P_{k} \vee T_{r}$, one notices that
\begin{equation*}
(T''-u)_{v'} =  T_{l} \vee P_{k-1} \vee T_{r} \quad \text{and} \quad  L(T''-u) \cap L(T-v) = L(T'') \setminus \{u\}.  
\end{equation*}
Moreover, by the definition of $\alpha''_{u}$, 
\begin{equation*}
\alpha''_{u}(w) = \max\{\alpha''(w)-1,1\} = \max\{\alpha(w)-1,1\} = \alpha_{v}(w)
\end{equation*} 
if $w \in L(T'') \setminus \{u\}$ and $\alpha''_{u}(w) = 1$ if $w \in L(T''-u) \setminus L(T-v)$. Consequently, ii) follows.
\end{proof}

\begin{lemma}
\label{lem:prod_formula_plucking_2}
Let $(T',v',\alpha')$ be a splitting subtree of $(T,v_{0},\alpha)$ and let $(T'',v_{0},\alpha'')$ be the corresponding complementary tree. Then for every $v \in L_{1}(T) \setminus L_{1}(T')$,
\begin{enumerate}
\item[i)] $(T',v',\alpha')$ is a splitting subtree of $(T-v,v_{0},\alpha_{v})$ and
\item[ii)] $(T''-v,v_{0},\alpha''_{v})$ is the complementary tree for $(T',v',\alpha')$,
\end{enumerate}
where $\alpha_{v}$ and $\alpha''_{v} = (\alpha'')_{v}$ are as in Definition~\ref{def:q_poly}.
\end{lemma}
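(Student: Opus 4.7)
The plan is to reduce the proof to a key observation that distinguishes this situation from Lemma~\ref{lem:prod_formula_plucking_1}: under the hypothesis $v \in L_{1}(T) \setminus L_{1}(T')$, every leaf of $T'$ must already have weight $1$. Indeed, since $\alpha' = \alpha|_{L(T')}$, any $w \in L(T')$ with $\alpha(w) = 1$ would lie in $L_{1}(T')$; hence $v \notin L(T')$, and, excluding the trivial case $T' = \{v'\}$, $v \notin V(T')$. Applying condition (ii) of Definition~\ref{def:splitting_subtree} with $u = v$ then gives $\alpha'(w) \leq \alpha(v) = 1$, so $\alpha' \equiv 1$ on $L(T')$. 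This is precisely what allows the delay function of $T'$ to remain unchanged when we pass from the ambient $\alpha$ to $\alpha_{v}$.

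For part i), I would distinguish the three structurally distinct positions of $v$: $v \notin V(T_{v'})$, $v \in V(T_{l})$, or $v \in V(T_{r})$. In each case $(T-v)_{v'} = \tilde{T}_{l} \vee T' \vee \tilde{T}_{r}$, with $\tilde{T}_{l},\tilde{T}_{r}$ equal either to $T_{l},T_{r}$ or to their pluckings. Since $\alpha' \equiv 1$ on $L(T')$, we immediately obtain $\alpha_{v}(w) = \max\{\alpha(w)-1,1\} = 1 = \alpha'(w)$ for every $w \in L(T')$, so $\alpha'$ is the restriction of $\alpha_{v}$ to $L(T')$. Moreover, condition (ii) of Definition~\ref{def:splitting_subtree} for $(T-v,v_{0},\alpha_{v})$ holds trivially because $\alpha_{v} \geq 1$ on all of $L(T-v)$.

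For part ii), I would verify that $T''-v$ arises from $T-v$ by replacing $T'$ with $P_{k}$, where $k = |V(T')|-1$, matching the prescription of Definition~\ref{def:complementary_tree}. This follows because $T''$ and $T$ differ only inside $T_{v'}$ by the swap $T' \leftrightarrow P_{k}$, while $v$ lies in $T_{l}$, $T_{r}$, or entirely outside $T_{v'}$, so plucking $v$ commutes with this swap. The delay compatibility is then checked leaf by leaf, mirroring the corresponding step in the proof of Lemma~\ref{lem:prod_formula_plucking_1}: for $u \in L(T'')\cap L(T)$ surviving the plucking, $\alpha''_{v}(u) = \max\{\alpha(u)-1,1\} = \alpha_{v}(u)$; for a $P_{k}$-leaf $u \in L(T'')\setminus L(T)$, $\alpha''(u)=1$ forces $\alpha''_{v}(u)=1$, matching the complementary-tree convention; and any leaf newly exposed by plucking $v$ coincides in $T-v$ and $T''-v$ and receives value $1$ under both rules. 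The main subtlety — rather than a genuine obstacle — is the case-by-case bookkeeping of new leaves; with the observation $\alpha'\equiv 1$ on $L(T')$ in hand, both parts reduce to direct verifications analogous to those in Lemma~\ref{lem:prod_formula_plucking_1}.
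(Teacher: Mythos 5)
Your proof is correct and takes essentially the same route as the paper's: the paper's entire argument consists of the observation that $v \in L_{1}(T)\setminus L_{1}(T')$ together with condition ii) of Definition~\ref{def:splitting_subtree} forces $\alpha' \equiv 1$ on $L(T')$, after which i) is immediate and ii) is verified by the same bookkeeping as in Lemma~\ref{lem:prod_formula_plucking_1}. You simply spell out more of the routine case-checking that the paper leaves to the reader.
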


\begin{proof}
Notice that if $L_{1}(T) \setminus L_{1}(T') \neq \emptyset$ then $\alpha(w) = \alpha'(w) = 1$ for all $w \in L(T')$ by condition ii) of Definition~\ref{def:splitting_subtree}. Therefore, $\alpha' \equiv (\alpha')_{v}$ is a restriction of $\alpha_{v}$ to $L(T')$ and, as one may see, i) follows. Using arguments analogous to those in our proof of Lemma~\ref{lem:prod_formula_plucking_1}, one shows ii).
\end{proof}

\begin{theorem}
\label{thm:prod_formula_plucking}
If $(T',v',\alpha')$ is a splitting subtree of $(T,v_{0},\alpha)$ then
\begin{equation}
Q_{q}(T,v_{0},\alpha) = Q_{q}(T',v',\alpha') \cdot Q_{q}(T'',v_{0},\alpha''),
\label{eqn:prod_formula_plucking}
\end{equation}
where $(T'',v_{0},\alpha'')$ is the complementary tree for $(T',v',\alpha')$.
\end{theorem}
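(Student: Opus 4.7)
The plan is to induct on the number of edges of $T$. The base case is immediate: if $T$ has no edges, then the only possibility is $T=T'=T''=\{v_0\}$ and all three plucking polynomials equal $1$.

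For the inductive step, the strategy is to expand $Q_q(T,v_0,\alpha)$ using its defining recursion, split the sum $\sum_{v\in L_1(T)}$ according to the partition $L_1(T) = L_1(T') \sqcup (L_1(T)\setminus L_1(T'))$, and invoke Lemmas~\ref{lem:prod_formula_plucking_1} and~\ref{lem:prod_formula_plucking_2} respectively on the two pieces so that the inductive hypothesis applies to each $Q_q(T-v,v_0,\alpha_v)$. The pivotal combinatorial input is the identity
\begin{equation*}
r(T,v_0,v) \;=\; r(T'',v_0,u) + r(T',v',v) \quad \text{for } v\in L(T'),
\end{equation*}
and $r(T,v_0,v)=r(T'',v_0,v)$ for $v\in L(T)\setminus L(T')$. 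Both are proved by tracking which vertices lie strictly to the right of the relevant $v_0$-to-$v$ path, using the decomposition $T_{v'} = T_l \vee T' \vee T_r$ versus $T''_{v'} = T_l \vee P_k \vee T_r$, and noting that all of $T_r$ sits to the right of the path through $T'$ while $P_k$ contributes nothing to the right (since it is a straight path). After substituting these $r$-values into the two partial sums, the factor $Q_q(T',v',\alpha')$ pulls out cleanly from the $L_1(T)\setminus L_1(T')$ sum, while the $L_1(T')$ sum produces $q^{r(T'',v_0,u)}Q_q(T''-u,v_0,\alpha''_u)\cdot Q_q(T',v',\alpha')$. The two combine into $Q_q(T',v',\alpha')\cdot Q_q(T'',v_0,\alpha'')$ once one observes the bijective correspondence $L_1(T'') = \{u\}\,\sqcup\,(L_1(T)\setminus L_1(T'))$, which follows from $L(T'')=(L(T)\setminus L(T'))\cup\{u\}$ together with $\alpha''(u)=1$ and $\alpha''|_{L(T)\cap L(T'')}=\alpha$.

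A genuinely separate degenerate case arises when $L_1(T')=\emptyset$. If $T'$ has no edges, then $P_k$ is a single vertex, $T''=T$ and $\alpha''=\alpha$, so \eqref{eqn:prod_formula_plucking} reduces to $Q_q(T,v_0,\alpha)=1\cdot Q_q(T,v_0,\alpha)$. If $T'$ has edges, then by definition $Q_q(T',v',\alpha')=0$; in this case the partition reduces to $L_1(T)=L_1(T)\setminus L_1(T')$, and Lemma~\ref{lem:prod_formula_plucking_2} together with the inductive hypothesis forces every summand $Q_q(T-v,v_0,\alpha_v)$ to vanish, so both sides are zero. It is therefore important that Lemmas~\ref{lem:prod_formula_plucking_1} and~\ref{lem:prod_formula_plucking_2} together cover every $v\in L_1(T)$ without overlap, which the verification of $L_1(T')=L_1(T)\cap L(T')$ secures.

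The main obstacle I expect is the bookkeeping in the first paragraph: getting the $r$-value identity exactly right (in particular confirming that passing from $T$ to $T''$ preserves the right-count for leaves outside $T'$, and that the only discrepancy for leaves inside $T'$ is the global shift $r(T'',v_0,u)$) and then matching up the leaf sets $L_1(T'')$ and $\{u\}\sqcup(L_1(T)\setminus L_1(T'))$ so that the reassembled sum is literally $Q_q(T'',v_0,\alpha'')$ rather than a shifted variant. Everything else is straightforward substitution into the inductive hypothesis.
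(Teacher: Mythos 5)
Your proposal is correct and follows essentially the same route as the paper: induction on the size of $T$, splitting the recursion over $L_1(T)=L_1(T')\sqcup(L_1(T)\setminus L_1(T'))$, applying Lemmas~\ref{lem:prod_formula_plucking_1} and~\ref{lem:prod_formula_plucking_2} with the inductive hypothesis, and using the identities $r(T,v_0,v)=r(T'',v_0,u)+r(T',v',v)$ for $v\in L(T')$ (the paper writes $r(T'',v_0,u)=r_*$) and $r(T,v_0,v)=r(T'',v_0,v)$ otherwise, together with $L_1(T'')=\{u\}\sqcup(L_1(T)\setminus L_1(T'))$. Your treatment of the degenerate case $L_1(T')=\emptyset$ is in fact more explicit than the paper's, which simply asserts it.
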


\begin{proof}
We prove \eqref{eqn:prod_formula_plucking} using induction on $n = |V(T)|$. For $n = 1$, \eqref{eqn:prod_formula_plucking} holds since
\begin{equation*}
Q_{q}(T,v_{0},\alpha) = Q_{q}(T',v',\alpha') = Q_{q}(T'',v_{0},\alpha'') = 1.  
\end{equation*}
Assume that \eqref{eqn:prod_formula_plucking} is true for trees with $n-1$ vertices. Let $T$ be a tree with $n$ vertices. If $L_{1}(T') = \emptyset$ then, as one may check, \eqref{eqn:prod_formula_plucking} clearly holds. If $L_{1}(T') \neq \emptyset$, let $u$ be the unique leaf in $L(T'') \setminus L(T)$. Then, by Lemma~\ref{lem:prod_formula_plucking_1} and the induction hypothesis, 
\begin{equation}
\label{eqn:pf_prop_product_qpoly_1}
Q_{q}(T-v,v_{0},\alpha_{v}) = Q_{q}(T'-v,v',\alpha'_{v}) \cdot Q_{q}(T''-u,v_{0},\alpha''_{u})
\end{equation}
for every $v \in L_{1}(T')$. Analogously, if $L_{1}(T) \setminus L_{1}(T') \neq \emptyset$ then by Lemma~\ref{lem:prod_formula_plucking_2} and the induction hypothesis,
\begin{equation}
\label{eqn:pf_prop_product_qpoly_2}
Q_{q}(T-v,v_{0},\alpha_{v}) = Q_{q}(T',v',\alpha') \cdot Q_{q}(T''-v,v_{0},\alpha''_{v})
\end{equation}
for every $v \in L_{1}(T) \setminus L_{1}(T')$.

We show that
\begin{equation}
\label{eqn:pf_prop_product_qpoly_3}
\sum_{v \in L_{1}(T')} q^{r(T,v_{0},v)} \, Q_{q}(T-v,v_{0},\alpha_{v}) = q^{r_{*}} \, Q_{q}(T',v',\alpha') \cdot Q_{q}(T''-u,v_{0},\alpha''_{u}), 
\end{equation}
where $r_{*} = \min\{r(T,v_{0},v) \mid v \in L(T')\}$ and $r(T,v_{0},v)$ is as in Definition~\ref{def:q_poly}. Let $v_{*}$ be a leaf of $T'$ with $r(T,v_{0},v_{*}) = r_{*}$ and suppose that there is $w \in V(T')$ to the right of the path $P_{*}$ joining $v_{*}$ and $v_{0}$ in $T$. Then, each leaf $v$ of $T'$ for which $w$ is on the path joining $v_{0}$ and $v$ must be to the right of $P_{*}$. For such leaves $r(T,v_{0},v) < r(T,v_{0},v_{*}) = r_{*}$ which contradicts the definition of $r_{*}$, so no $w \in V(T')$ is to the right of $P_{*}$. Let $P$ be a path joining $v \in L(T')$ and $v_{0}$. Notice that both $P_{*}$ and $P$ include $v'$ as a vertex, thus, $P_{*}$ and $P$ have a common subpath between $v'$ and $v_{0}$. Since there are no vertices of $T'$ to the right of $P_{*}$, it follows that
\begin{equation*}
r(T,v_{0},v) = r(T,v_{0},v_{*}) + r(T',v',v) = r_{*} + r(T',v',v)
\end{equation*}
and consequently
\begin{equation}
\label{eqn:pf_prop_product_qpoly_4}
\sum_{v \in L_{1}(T')} q^{r(T,v_{0},v)} \, Q_{q}(T'-v,v',\alpha'_{v}) = \sum_{v \in L_{1}(T')} q^{r_{*}+r(T',v',v)} \, Q_{q}(T'-v,v',\alpha'_{v})
= q^{r_{*}} \, Q_{q}(T',v',\alpha').
\end{equation}
Therefore, \eqref{eqn:pf_prop_product_qpoly_3} follows from \eqref{eqn:pf_prop_product_qpoly_1} and \eqref{eqn:pf_prop_product_qpoly_4}.

Notice that $r(T,v_{0},v) = r(T'',v_{0},v)$ for any $v \in L_{1}(T) \setminus L_{1}(T')$, so by \eqref{eqn:pf_prop_product_qpoly_2},
\begin{equation}
\label{eqn:pf_prop_product_qpoly_5}
\sum_{v \in L_{1}(T) \setminus L_{1}(T')} q^{r(T,v_{0},v)} \, Q_{q}(T-v,v_{0},\alpha_{v}) = \sum_{v \in L_{1}(T) \setminus L_{1}(T')} q^{r(T'',v_{0},v)} \, Q_{q}(T',v',\alpha') \cdot Q_{q}(T''-v,v_{0},\alpha''_{v}).
\end{equation}
Moreover, by Definition~\ref{def:q_poly},
\begin{equation}
\label{eqn:pf_prop_product_qpoly_6}
Q_{q}(T,v_{0},\alpha) = \sum_{v \in L_{1}(T')} q^{r(T,v_{0},v)} \, Q_{q}(T-v,v_{0},\alpha_{v}) + \sum_{v \in L_{1}(T) \setminus L_{1}(T')} q^{r(T,v_{0},v)} \, Q_{q}(T-v,v_{0},\alpha_{v})
\end{equation}
and
\begin{equation}
\label{eqn:pf_prop_product_qpoly_7}
Q_{q}(T'',v_{0},\alpha'') = q^{r(T'',v_{0},u)} \, Q_{q}(T''-u,v_{0},\alpha''_{u}) + \sum_{v \in L_{1}(T'') \setminus \{u\}} q^{r(T'',v_{0},v)} \, Q_{q}(T''-v,v_{0},\alpha''_{v}).
\end{equation}
Since $r(T'',v_{0},u) = r_{*}$ and $L_{1}(T'') \setminus \{u\} = L_{1}(T) \setminus L_{1}(T')$, it follows from \eqref{eqn:pf_prop_product_qpoly_6}, \eqref{eqn:pf_prop_product_qpoly_3}, \eqref{eqn:pf_prop_product_qpoly_5}, 
and \eqref{eqn:pf_prop_product_qpoly_7} that
\begin{eqnarray*}
& & Q_{q}(T,v_{0},\alpha)
= \sum_{v \in L_{1}(T')} q^{r(T,v_{0},v)} \, Q_{q}(T-v,v_{0},\alpha_{v}) + \sum_{v \in L_{1}(T) \setminus L_{1}(T')} q^{r(T,v_{0},v)} \, Q_{q}(T-v,v_{0},\alpha_{v}) \\
& & = \quad q^{r_{*}} \, Q_{q}(T',v',\alpha') \cdot Q_{q}(T''-u,v_{0},\alpha''_{u})
+ \sum_{v \in L_{1}(T) \setminus L_{1}(T')} q^{r(T'',v_{0},v)} \, Q_{q}(T',v',\alpha') \cdot Q_{q}(T''-v,v_{0},\alpha''_{v}) \\
& & = \quad Q_{q}(T',v',\alpha') \cdot Q_{q}(T'',v_{0},\alpha'').
\end{eqnarray*}
\end{proof}

\begin{remark}
\label{rem:prod_formula_plucking2}
Note that Theorem~\ref{thm:prod_formula_plucking} implies also the formula obtained by J.H. Przytycki in \cite{Prz2016-3} on page 129 for the ordered rooted sum of trees 
\begin{equation*}
T = T_{k} \vee T_{k-1} \vee \cdots \vee T_{1}
\end{equation*}
with root $v_{0}$ and weight function
\begin{equation*}
\alpha = \alpha_{k} \vee \alpha_{k-1} \vee \cdots \vee \alpha_{1},    
\end{equation*}
where $\alpha|_{T_{j}} = \alpha_{j} \equiv s_{j}$ satisfies conditions $s_{1} = 1$ and
\begin{equation*}
s_{j-1} \leq s_{j} \leq \sum_{i=1}^{j-1} n_{i} + 1
\end{equation*}
for $j = 2,3,\ldots,k$ and $n_{i} = |V(T_{i})|-1$. Indeed, let 
\begin{equation*}
T' = T_{k-1} \vee T_{k-2} \vee \ldots \vee T_{1} \quad \text{and} \quad \alpha' = \alpha_{k-1} \vee \alpha_{k-2} \vee \cdots \vee \alpha_{1},    
\end{equation*}
then $(T',v_{0},\alpha')$ is a splitting subtree of $(T,v_{0},\alpha)$ with complementary tree $(T_{k} \vee P_{n'_{k}},v_{0},\alpha'_{k})$, where $n'_{k} = n_{1}+n_{2}+\cdots+n_{k-1}$ and $\alpha'_{k}(v) = s_{k}$ if $v \in L(T_{k})$ and $\alpha'_{k}(v) = 1$ if $v$ is a new leaf. By Theorem~\ref{thm:prod_formula_plucking},
\begin{equation*}
Q_{q}(T,v_{0},\alpha) = Q_{q}(T',v_{0},\alpha') \cdot Q_{q}(T_{k} \vee P_{n'_{k}},v_{0},\alpha'_{k}).
\end{equation*}
For convenience, we simply write $(T_{0},v_{0}) = (T_{0},v_{0},\alpha_{0})$ for any tree $T_{0}$ when $\alpha_{0} \equiv 1$. By Definition~\ref{def:q_poly},
\begin{equation*}
Q_{q}(T_{k} \vee P_{n'_{k}},v_{0},\alpha'_{k}) = Q_{q}(T_{k} \vee P_{n'_{k}-s_{k}+1},v_{0}),
\end{equation*}
and since $(T_{k},v_{0})$ is a splitting subtree of $(T_{k} \vee P_{n'_{k}-s_{k}+1},v_{0})$ with  complementary tree $(P_{n_{k}} \vee P_{n'_{k}-s_{k}+1},v_{0})$ for a constant weight function $\alpha''_{k} \equiv 1$, by Theorem~\ref{thm:prod_formula_plucking},
\begin{eqnarray*}
Q_{q}(T_{k} \vee P_{n'_{k}-s_{k}+1},v_{0}) &=& Q_{q}(T_{k},v_{0}) \cdot Q_{q}(P_{n_{k}} \vee P_{n'_{k}-s_{k}+1},v_{0}) \\
&=& Q_{q}(T_{k},v_{0}) \cdot \binom{n_{1}+n_{2}+\cdots+n_{k}-s_{k}+1}{n_{k}}_{q}.
\end{eqnarray*}
Therefore, we showed that
\begin{equation*}
Q_{q}(T,v_{0},\alpha) = Q_{q}(T',v_{0},\alpha') \cdot Q_{q}(T_{k},v_{0}) \cdot \binom{n_{1}+n_{2}+\cdots+n_{k}-s_{k}+1}{n_{k}}_{q}.
\end{equation*}
Applying recursively equation above, we see that
\begin{equation*}
Q_{q}(T,v_{0},\alpha) = \prod_{i=1}^{k} Q_{q}(T_{i},v_{0}) \cdot \prod_{i=2}^{k} \binom{n_{1}+n_{2}+\cdots+n_{i}-s_{i}+1}{n_{i}}_{q}.
\end{equation*}
We also would like to point it out that this result generalizes Theorem~2.2 in \cite{Prz2016} by taking $k = 2$ and $s_{1} = s_{2} = 1$.
\end{remark}

\begin{example}
\label{ex:prod_formula_plucking}
Let $(T,v_{0},\alpha)$ be a plane rooted tree with the weight function shown in Figure~\ref{fig:ex_prod_formula_plucking}(a). Consider a (red) rooted subtree $T'$ of $T$ with root $v_{0}$ that includes all leaves $v \in L(T)$ with $\alpha(v) = 1$. Then $(T',v_{0},\alpha')$ is a splitting subtree of $(T,v_{0},\alpha)$, where $\alpha' = \alpha |_{L(T')}$. Clearly, $(T_{1},v_{0},\alpha_{1})$ shown in Figure~\ref{fig:ex_prod_formula_plucking}(b) is the corresponding complementary tree for $(T',v_{0},\alpha')$. Therefore, by Theorem~\ref{thm:prod_formula_plucking},
\begin{equation*}
Q_{q}(T,v_{0},\alpha) = Q_{q}(T',v_{0},\alpha') \cdot Q_{q}(T_{1},v_{0},\alpha_{1}).
\end{equation*}
Let $T'_{1}$ be a (red) rooted subtree of $T_{1}$ with root $v_{0}$ that includes all leaves $v \in L(T_{1})$ with $\alpha_{1}(v) \leq 2$. If we let $\alpha'_{1} = \alpha_{1} |_{L(T'_{1})}$, then $(T'_{1},v_{0},\alpha'_{1})$ is a splitting subtree of $(T_{1},v_{0},\alpha_{1})$ and $(T_{2},v_{0},\alpha_{2})$ shown in Figure~\ref{fig:ex_prod_formula_plucking}(c) is its complementary tree. Thus, by Theorem~\ref{thm:prod_formula_plucking}
\begin{equation*}
Q_{q}(T_{1},v_{0},\alpha_{1}) = Q_{q}(T'_{1},v_{0},\alpha'_{1}) \cdot Q_{q}(T_{2},v_{0},\alpha_{2}).
\end{equation*}
Analogously, for $i = 1,2,\ldots,k-1$, let $(T_{i},v_{0},\alpha_{i})$ be the plane rooted tree shown in Figure~\ref{fig:ex_prod_formula_plucking}(d) for $1 \leq i \leq k$ and let $T'_{i}$ be the (red) rooted subtree of $T_{i}$ with root $v_{0}$ that includes all leaves $v \in L(T_{i})$ with $\alpha_{i}(v) \leq 2i$. We put $\alpha'_{i} = \alpha_{i} |_{L(T'_{i})}$ and notice that $(T'_{i},v_{0},\alpha'_{i})$ is a splitting subtree of $(T_{i},v_{0},\alpha_{i})$ with its corresponding complementary tree $(T_{i+1},v_{0},\alpha_{i+1})$. Therefore, by Theorem~\ref{thm:prod_formula_plucking}
\begin{equation*}
Q_{q}(T_{i},v_{0},\alpha_{i}) = Q_{q}(T'_{i},v_{0},\alpha'_{i}) \cdot Q_{q}(T_{i+1},v_{0},\alpha_{i+1})
\end{equation*}
Since by Definition~\ref{def:q_poly}, $Q_{q}(T',v_{0},\alpha') = 1+q$,
\begin{equation*}
Q_{q}(T'_{i},v_{0},\alpha'_{i}) = q^{2i-1} (1+q)(1+q+q^{2})    
\end{equation*}
for $i=1,2,\ldots,k-1$, and
\begin{equation*}
Q_{q}(T_{k},v_{0},\alpha_{k}) = q^{2k-1} (1+q)(1+q+q^{2}),
\end{equation*}
it follows that
\begin{equation*}
Q_{q}(T,v_{0},\alpha) = Q_{q}(T',v_{0},\alpha') \cdot \bigg( \prod_{i=1}^{k-1} Q_{q}(T'_{i},v_{0},\alpha'_{i}) \bigg) \cdot Q_{q}(T_{k},v_{0},\alpha_{k}) = q^{k^{2}} (1+q)^{k+1} (1+q+q^{2})^{k}.
\end{equation*} 
\end{example}

\begin{figure}[ht]
\centering
\includegraphics[scale=1]{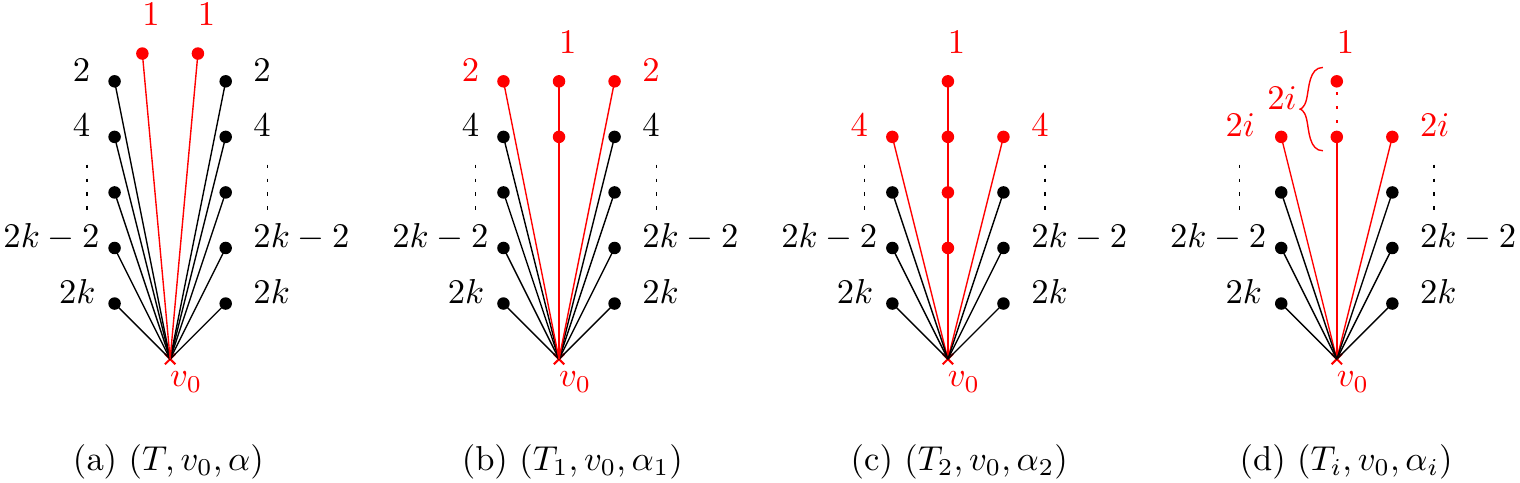}
\caption{Trees $(T,v_{0},\alpha)$, $(T_{1},v_{0},\alpha_{1})$, $(T_{2},v_{0},\alpha_{2})$, and $(T_{i},v_{0},\alpha_{i})$}
\label{fig:ex_prod_formula_plucking}
\end{figure}

\section{Removable Arc Theorem}
\label{s:removable_arc_thm}

In this section, we use $\Theta_{A}$-state expansion to show that $C(A)$ for a Catalan state $C$ with ``removable arcs'' (see Definition~\ref{def:removable_arc}) can be found by computing $C'(A)$ for the Catalan state $C'$ with those arcs removed. Thus, in particular, one may use Theorem~\ref{thm:remove_an_arc} to reduce complexity of computations for $C(A)$ in such a case.

For a crossingless connection $C$ in $\mathrm{R}^{2}_{m,n,2k-n}$ with vertices $v_{1},v_{2},v_{3},v_{4}$ and $-\lfloor\frac{n}{2}\rfloor \leq t \leq m$, let $\tau_{t}(C)$ be the crossingless connection in $\mathrm{R}^{2}_{m-t,n+2t,2k-n}$ obtained from $C$ by shifting $v_{1}$ and $v_{2}$ by $t$ units down if $t \geq 0$ and $-t$ units up if $t \leq 0$ (see example in Figure~\ref{fig:tau_C} when $t = 3$). 

\begin{figure}[htb]
\centering
\includegraphics[scale=1]{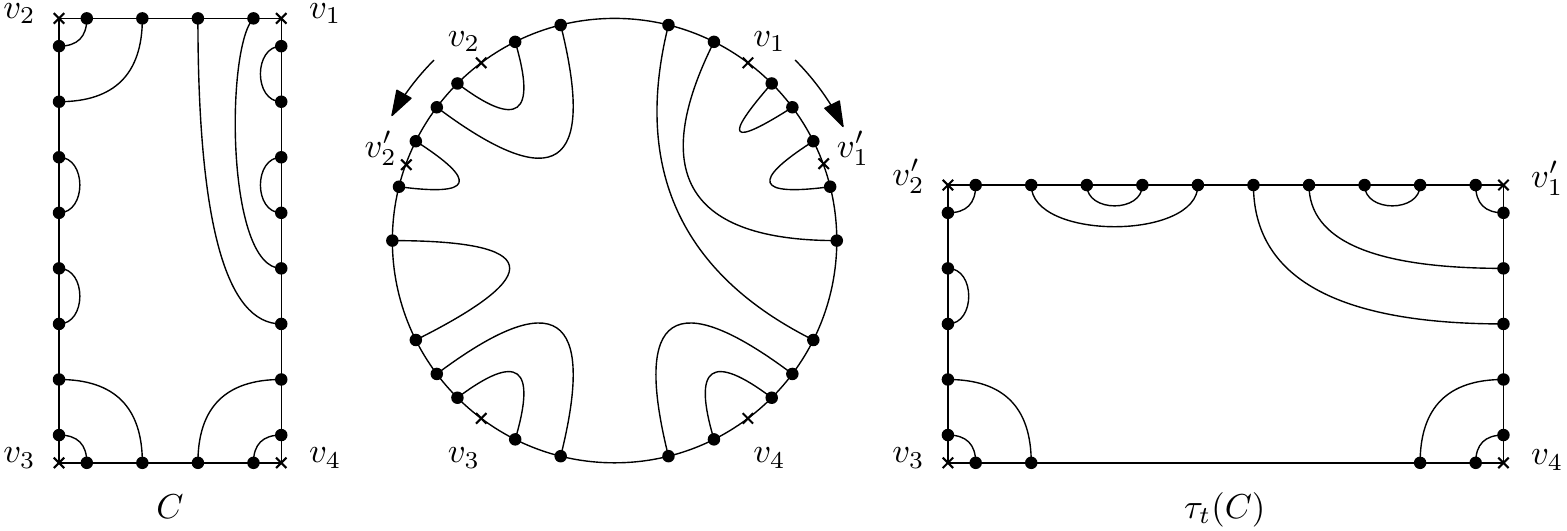}
\caption{Crossingless connections $C$ and $\tau_{t}(C)$}
\label{fig:tau_C}
\end{figure}

An arc $c$ of a crossingless connection $C$ which neither connects bottom- and top-boundary points nor it is a left or a right return will be called a \emph{proper arc}. Assume that $c$ is a proper arc of $C$ in $\mathrm{R}^{2}_{m,n,2k-n}$, where $m \geq 1$. Define a crossingless connection $C \smallsetminus c$ as follows:
\begin{enumerate}
\item[i)] If $c$ has no ends on the bottom boundary then
\begin{equation*}
C \smallsetminus c = \tau_{1-m}(\tau_{m}(C)-\{c\}), 
\end{equation*}
where $\tau_{m}(C)-\{c\}$ is the crossingless connection with $c$ and its ends removed from $\tau_{m}(C)$ (see Figure~\ref{fig:C_remove_c}).
\item[ii)] If $c$ has an end on the bottom boundary then
\begin{equation*}
C \smallsetminus c = (C^{*} \smallsetminus c^{*})^{*}    
\end{equation*}
where $c^{*}$ is the image of $c$ after a $\pi$-rotation $C^{*}$ of $C$ (see Figure~\ref{fig:C_star_remove_c_star}).
\end{enumerate}

\begin{figure}[ht]
\centering
\includegraphics[scale=1]{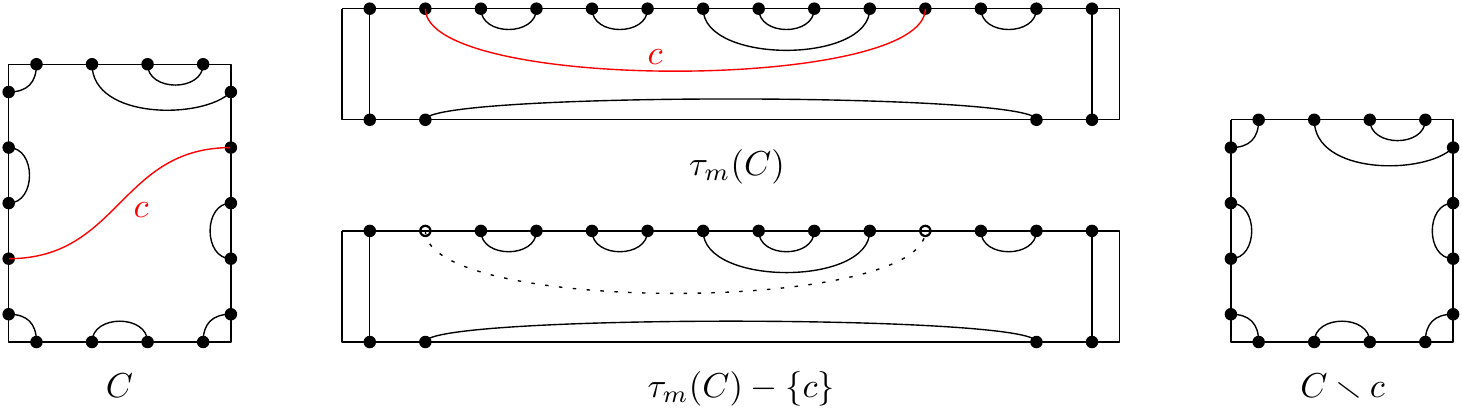}
\caption{Catalan state $C \smallsetminus c$ for $c$ with no ends on the bottom boundary}
\label{fig:C_remove_c}
\end{figure}

\begin{figure}[ht]
\centering
\includegraphics[scale=1]{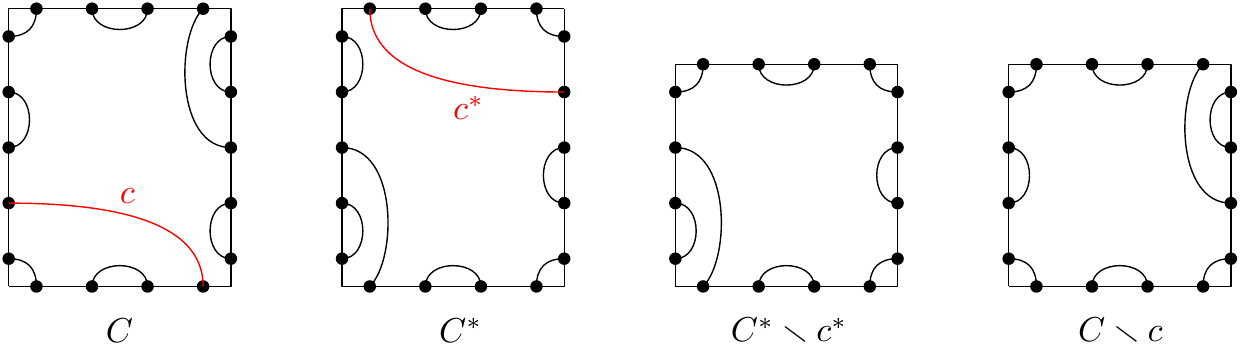}
\caption{Catalan state $C \smallsetminus c$ for $c$ with at least one end on the bottom boundary}
\label{fig:C_star_remove_c_star}
\end{figure}

We relabel points on the top and bottom boundaries of $\mathrm{R}^{2}_{m,n,n}$ as follows:
\begin{equation*}
x_{i} = y_{1-i} = y'_{i-n} \ \text{for} \ 1 \leq i \leq n
\end{equation*}
and
\begin{equation*}
x'_{i} = y_{m+i} = y'_{m+(n-i)+1} \ \text{for} \ 1 \leq i \leq n.
\end{equation*}

\begin{definition}
\label{def:removable_arc}
Let $C \in \mathrm{Cat}(m,n)$, where $m \geq 1$. An arc $c$ of $C$ is called \emph{removable} if $c$ is a proper arc and there is a non-negative integer $j_{0} \leq m-1$ such that all other arcs of $C$ with ends $y_{j},y_{j+1}$ or $y'_{j},y'_{j+1}$ are in $A_{1}$ if $j \leq j_{0}$ and in $A_{2}$ if $j > j_{0}$, where $A_{1}$ and $A_{2}$ are regions into which $c$ splits $\mathrm{R}^{2}_{m,n,n}$ with $A_{1}$ touching the top boundary and $A_{2}$ touching the bottom boundary (see Figure~\ref{fig:regions_a1_a2}).
\end{definition}

\begin{figure}[ht]
\centering
\includegraphics[scale=1]{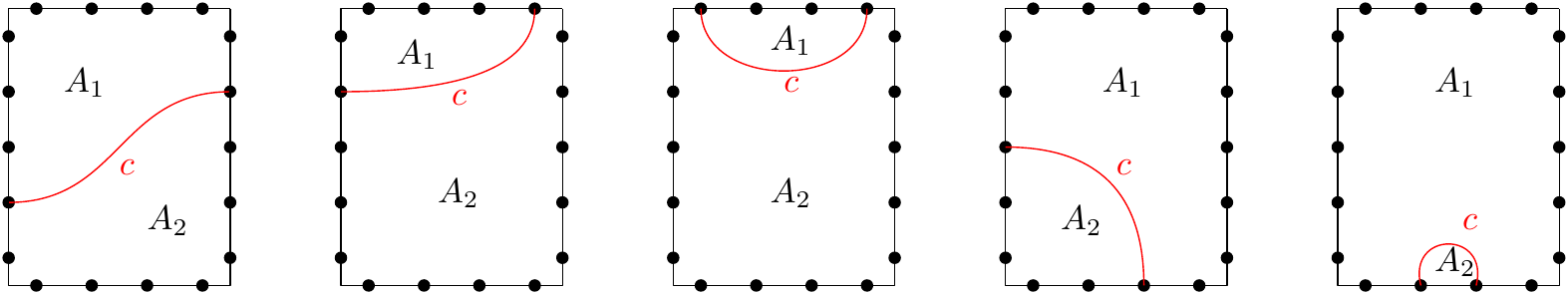}
\caption{Regions $A_{1}$ and $A_{2}$ determined by $c$}
\label{fig:regions_a1_a2}
\end{figure}

For a Catalan state with no left (or right) returns, arcs joining its left- and right-boundary points are clearly examples of removable arcs. We say that arcs $\{a_{1},a_{2},\ldots,a_{k}\}$ of a Catalan state $C$ constitute a family of \emph{parallel arcs} of $C$ if $\{p_{1},p_{2},\ldots,p_{k}\}$ and $\{q_{1},q_{2},\ldots,q_{k}\}$ form distinct sets of consecutive boundary points of $C$, where $p_{i},q_{i}$ are ends of $a_{i}$ for $i = 1,2,\ldots,k$. Arcs $c$ and $c'$ of $C$ are \emph{parallel} if they belong to a family of parallel arcs of $C$.

\begin{remark}
\label{rem:removable_arc}
If $c$ is a removable arc of $C \in \mathrm{Cat}(m,n)$, then all top returns of $C$ in $A_{2}$ are parallel to $c$ and $C$ has no \emph{innermost top corners} (i.e., arcs with ends $x_{1},y_{1}$ or $x_{n},y'_{1}$) in $A_{2}$, and all bottom returns of $C$ in $A_{1}$ are parallel to $c$ and $C$ has no \emph{innermost bottom corners} (i.e., arcs with ends $x'_{1},y_{m}$ or $x'_{n},y'_{m}$) in $A_{1}$. Indeed, if there is a top return that is not parallel to $c$ or an innermost top corner in $A_{2}$, then there must be an arc with ends $y_{i},y_{i+1}$ or $y'_{i},y'_{i+1}$ in $A_{2}$, where $i \leq 0$. Since $c$ is a removable arc, by Definition~\ref{def:removable_arc}, $i > j_{0}$ for some non-negative integer $j_{0}$, which is impossible. Analogous argument applies to the latter case.
\end{remark}

\begin{theorem}[Removable Arc Theorem]
\label{thm:remove_an_arc}
Let $c$ be a removable arc of $C \in \mathrm{Cat}(m,n)$ with the left end $y_{a}$ and the right end $y'_{b}$, where $m \geq 1$ and $1-n \leq a,b \leq m+n$. Then 
\begin{equation*}
C(A) = A^{b-a} \, C'(A),
\end{equation*}
where $C' = C \smallsetminus c$. In particular, $C$ is realizable if and only if $C'$ is realizable.
\end{theorem}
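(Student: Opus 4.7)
The plan is to combine the $\Theta_{A}$-state expansion machinery with the plucking polynomial factorization (Theorem~\ref{thm:prod_formula_plucking}) just established in Section~\ref{s:factor_plucking_poly}. I would first settle the no-bottom-returns case cleanly, and then bootstrap to the general case via a $\Theta_{A}$-state expansion, using the $\pi$-rotation identity $C \smallsetminus c = (C^{*} \smallsetminus c^{*})^{*}$ to swap the roles of top and bottom as needed. As a preliminary reduction, the $\pi$-rotation allows me to assume $c$ has no ends on the bottom boundary.

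For the no-bottom-returns case, Theorem~\ref{thm:coef_no_bot_rtn} gives $C(A) = A^{2\beta(C) - mn} \, Q^{*}_{A^{-4}}(T(C), v_{0}, \alpha)$, and similarly for $C'$. The key observation is that the removability of $c$, together with Remark~\ref{rem:removable_arc}, forces the arcs of $C$ lying in $A_{2}$ to be parallel to $c$ and forbids innermost bottom corners in $A_{1}$. Translating into tree language, the vertices of $T(C)$ coming from $A_{2}$ assemble into a plane rooted subtree $(T', v', \alpha')$ whose weight function is forced to take only the minimal values permitted by $\alpha$; condition (ii) of Definition~\ref{def:splitting_subtree} is then automatic, so $(T', v', \alpha')$ is a splitting subtree of $(T(C), v_{0}, \alpha)$. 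Moreover, its complementary tree identifies naturally (after accounting for the $\tau_{1-m}$ shift in the definition of $C \smallsetminus c$) with $(T(C'), v_{0}, \alpha)$. Applying Theorem~\ref{thm:prod_formula_plucking} yields $Q_{q}(T(C)) = Q_{q}(T', v', \alpha') \cdot Q_{q}(T(C'))$, and since $T'$ is essentially a trivial path of parallel arcs, $Q_{q}(T', v', \alpha')$ reduces to a pure $q$-monomial which only affects the overall normalization.

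To handle a general $C$, I would decompose $C = R * F$ with $c$ contained in $R$, and apply the $\Theta_{A}$-state expansion of Theorem~\ref{thm:main} to $(R, \emptyset)$. Each summand is a coefficient of some Catalan state $R' * F_{I'}$ with no top returns; after another $\pi$-rotation, these fall under the no-bottom-returns case treated above, so the arc $c$ (or its rotated image) remains removable in each summand and the factorization $C_{\mathrm{summand}}(A) = A^{b-a} C'_{\mathrm{summand}}(A)$ transfers termwise by linearity of the $\Theta_{A}$-expansion. The identity $C(A) = A^{b-a} C'(A)$ then follows by summing.

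The main obstacle I anticipate is the exponent bookkeeping. One must compare $2\beta(C) - mn$ with $2\beta(C') - (m-1)(n+2)$, plus the shift arising from the minimum degree in $Q^{*}$ applied to the split plucking polynomial, and verify that the total difference is exactly $b - a$. This requires an explicit description of the maximal sequence $\mathbf{b}$ of $C$ in terms of the geometry of $c$ (positions $a$, $b$ and the separation between $A_{1}$ and $A_{2}$), together with control on $\min\deg_{q}$ of $Q_{q}(T', v', \alpha')$. I would develop these formulas as auxiliary lemmas, presumably along the lines suggested by the paper's Corollaries~\ref{cor:lr_relations} and \ref{cor:b_seq}, before assembling the final $A$-exponent.
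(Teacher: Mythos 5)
Your overall architecture---settle the no-bottom-returns case via Theorem~\ref{thm:prod_formula_plucking} plus $\beta$-bookkeeping, then transfer to a general $C$ termwise through a $\Theta_{A}$-state expansion---is the same as the paper's. However, two of your steps, as described, would fail. The main one is that your general-case reduction is oriented the wrong way: you reduce to ``$c$ has no ends on the bottom boundary'' and then place $c$ inside the roof state $R$ of $C = R*F$ before expanding $(R,\emptyset)$. But the $\Theta_{A}$-state expansion of Theorem~\ref{thm:main} replaces $R$ by a family of middle states $R'$ bearing no relation to $R$; the arc $c$ does not survive into the summands $R'*F_{I'}$, so there is nothing left to ``remove termwise.'' What the expansion preserves is the floor state $F$, which enters every summand as $F_{I'\oplus I}$. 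One must therefore reduce by $\pi$-rotation to the case where $c$ has no ends on the \emph{top} boundary, write $C = R*F$ and $C' = R*F'$ with the \emph{same} $R$, and compare $\Theta_{A}(R',I';F)$ with $\Theta_{A}(R',I';F')$ pair by pair in the common expansion.

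Even with the correct orientation, the termwise identity $\Theta_{A}(R',I';F) = A^{b-a}\,\Theta_{A}(R',I';F')$ is not automatic. When $c$ is a bottom return of $C$, removing it can shrink $\varphi_{n}(B)$ to a strictly smaller set, and there exist pairs $(R',I')$ for which $F_{I'}$ yields a genuine Catalan state while $(F')_{I'} = K_{0}$; one must show that in this situation $R'*F_{I'}$ is non-realizable (the paper does this by counting $\#(C\cap l^{h}_{m-1})$), and in the remaining cases one must exhibit a removable arc of $R'*F_{I'}$ (which may be a \emph{different} parallel arc $c'$, not $c$ itself) with the same value of $b-a$. This case analysis---the content of Lemma~\ref{lem:removable_arc_phi} and cases i)--iii) in the paper's proof---is the real work of the general step and is absent from your plan. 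Two smaller corrections to your no-bottom-returns sketch: the splitting subtree is the full subtree $T_{v'}$ carrying the arcs of $A_{1}$, an arbitrary tree whose plucking polynomial is a genuine common factor of $Q_{q}(T(C))$ and $Q_{q}(T(C\smallsetminus c))$, not a path contributing a monomial (the monomial $q^{r_{*}}$ arises from comparing the two complementary trees, which differ by contracting one path edge); and $C\smallsetminus c \in \mathrm{Cat}(m-1,n)$, so the exponent comparison is between $2\beta(C)-mn$ and $2\beta(C')-(m-1)n$, not $(m-1)(n+2)$.
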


We start by showing that Theorem~\ref{thm:remove_an_arc} holds for Catalan states $C$ with no bottom returns which have a removable arc $c$ with no ends on the bottom boundary (see Lemma~\ref{lem:remove_an_arc_no_bot_rtn}). This will be achieved in two steps, i.e., we first establish a relation between plucking polynomials of plane rooted trees corresponding to $C$ and $C \smallsetminus c$ (see Lemma~\ref{lem:removable_arc_tree}) and then we find a relation between $\beta(C)$ and $\beta(C \smallsetminus c)$ (see Lemma~\ref{lem:removable_arc_beta}). For a Laurent polynomial $Q_{q}$ in variable $q$, define
\begin{equation*}
Q_{q}^{*} = \begin{cases}
q^{-\min\deg_{q} Q_{q}} \, Q_{q}, & \text{if} \ Q_{q} \neq 0,\\
0, & \text{if} \ Q_{q} = 0.
\end{cases}
\end{equation*}

\begin{lemma}
\label{lem:removable_arc_tree}
Let $C$ be a Catalan state with no bottom returns which has a removable arc $c$ whose both ends are not bottom-boundary points. Denote by $(T,v_{0},\alpha)$ and $(\tilde{T},v_{0},\tilde{\alpha})$ the plane rooted trees with delay for $C$ and $C \setminus c$, respectively. Then
\begin{equation*}
Q_{q}^{*}(T,v_{0},\alpha) = Q_{q}^{*}(\tilde{T},v_{0},\tilde{\alpha}).
\end{equation*}
In particular, $C$ is realizable if and only if $C \setminus c$ is realizable.
\end{lemma}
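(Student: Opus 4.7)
The plan is to apply the factorization of Theorem \ref{thm:prod_formula_plucking} to both $(T, v_0, \alpha)$ and $(\tilde{T}, v_0, \tilde{\alpha})$ using a common splitting subtree, and then to show that the two resulting complementary factors agree after the normalization $Q_q \mapsto Q_q^*$.

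The key geometric observation is that the arcs of $C$ lying in the region $A_1$ (above $c$) correspond to a rooted subtree of $T = T(C)$ that can be realized as a splitting subtree $(T', v', \alpha')$ in the sense of Definition \ref{def:splitting_subtree}. Indeed, by Definition \ref{def:removable_arc} every left or right return in $A_1$ has its lower end $y_j$ or $y'_j$ with $j \leq j_0$, while every left or right return in $A_2$ has its lower end at position $> j_0$; combined with Remark \ref{rem:removable_arc}, which forces every top return of $A_2$ to be parallel to $c$, this guarantees $\alpha'(v) \leq \alpha(u)$ for all $v \in L(T')$ and $u \in L(T) \setminus L(T')$. Since removing $c$ changes no arc of $C$ that lies in $A_1$, the same $(T', v', \alpha')$ is also a splitting subtree of $(\tilde{T}, v_0, \tilde{\alpha})$, up to the natural reindexing induced by the shift $\tau_{1-m} \circ \tau_m$ in the construction of $C \smallsetminus c$.

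Applying Theorem \ref{thm:prod_formula_plucking} to both trees then yields
\[
Q_q(T, v_0, \alpha) = Q_q(T', v', \alpha') \cdot Q_q(T'', v_0, \alpha''),
\qquad
Q_q(\tilde{T}, v_0, \tilde{\alpha}) = Q_q(T', v', \alpha') \cdot Q_q(\tilde{T}'', v_0, \tilde{\alpha}''),
\]
where $(T'', v_0, \alpha'')$ and $(\tilde{T}'', v_0, \tilde{\alpha}'')$ are the corresponding complementary trees, in both of which the subtree $T'$ has been replaced by a path $P_k$ of length $k = |V(T')|-1$ whose new leaves carry weight $1$. It suffices to show $Q_q^*(T'', v_0, \alpha'') = Q_q^*(\tilde{T}'', v_0, \tilde{\alpha}'')$. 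Both complementary trees record the same lower half $A_2$ of $C$ attached to the same path; they differ only by the single vertex $v_c$ corresponding to $c$, with an accompanying one-unit length adjustment of the path coming from $\tau$. Since every leaf of $T''$ introduced by the path replacement, and $v_c$ itself if it is a leaf, carries weight $1$, a single unrolling of the recursion in Definition \ref{def:q_poly} shows that plucking $v_c$ from $T''$ contributes exactly one factor $q^{r(T'', v_0, v_c)}$ and returns the polynomial $Q_q(\tilde{T}'', v_0, \tilde{\alpha}'')$. The normalization $Q_q \mapsto Q_q^*$ absorbs this monomial. The final ``in particular'' statement then follows from Theorems \ref{thm:coef_nonzero} and \ref{thm:coef_no_bot_rtn}, which together show that a Catalan state with no bottom returns is realizable iff the normalized plucking polynomial of its tree is nonzero.

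The main obstacle will be the bookkeeping required to verify how the tree-with-delay construction of \cite{DP2019} behaves under removal of $c$ together with the accompanying shift $\tau$. The three possible shapes of $c$ (a top return with both ends on the top boundary, a top-to-side arc, or a left-to-right arc) each must be checked, since the position of $v_c$ inside $T''$ (whether it is a leaf of the path $P_k$, or an internal vertex adjacent to a branch coming from $A_2$) and the precise reindexing of the $\alpha$-values of left and right returns by $\tau$ differ from case to case. Once the correspondence $T'' \smallsetminus v_c \leftrightarrow \tilde{T}''$ (with the natural relabeling) is established in each case, the rest follows mechanically from Definition \ref{def:q_poly}.
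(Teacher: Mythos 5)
Your proposal follows essentially the same route as the paper: the arcs in $A_{1}$ give a splitting subtree $(T',v',\alpha')$ common (up to the obvious identification) to both trees, Theorem~\ref{thm:prod_formula_plucking} factors both plucking polynomials, and the two complementary trees differ by a single plucking of the unique weight-$1$ leaf at the tip of the pendant path, whose monomial $q^{r_{*}}$ is absorbed by the normalization. The paper avoids your anticipated case analysis on the shape of $c$ by observing once and for all that $\tilde{T}=T/e_{c}$ with $T'=T_{v'}$ for $v'$ the far endpoint of $e_{c}$, so the bookkeeping is uniform.
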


\begin{proof}
Let $e_{c}$ be the edge of $T$ corresponding to $c$, and let $T/e_{c}$ be contraction of $T$ by $e_{c}$. Define a weight function $\alpha_{0}$ on $L(T/e_{c})$ by 
\begin{equation*}
\alpha_{0}(v) =
\begin{cases}
\max\{\alpha(v)-1,1\}, & \text{if} \  v \in L(T/e_{c}) \cap (L(T) \setminus L(T_{v'})), \\ 
\alpha(v), & \text{if} \ v \in L(T/e_{c}) \cap L(T_{v'}), \\
1, & \text{if} \ v \in L(T/e_{c}) \setminus L(T),
\end{cases}
\end{equation*}
where $v'$ is the vertex of $T$ incident to $e_{c}$ such that $e_{c}$ is not an edge of $T_{v'}$. As it could easily be seen $\tilde{T} = T/e_{c}$ and $\tilde{\alpha} = \alpha_{0}$.

Since $c$ is a removable arc, there is a non-negative integer $j_{0}$ such that $\alpha(v) \leq j_{0}+1 < \alpha(w)$ for all $v \in L(T_{v'})$ and $w \in L(T) \setminus L(T_{v'})$. Thus, $(T',v',\alpha')$ is a splitting subtree of $(T,v_{0},\alpha)$, where $T' = T_{v'}$ and $\alpha' = \alpha |_{L(T')}$. 
Moreover, if $\tilde{v}'$ denotes the new vertex of $\tilde{T}$ obtained after identifying ends of $e_{c}$ in $T$, then 
\begin{equation*}
\tilde{\alpha}(v) \leq j_{0}+1 \leq \tilde{\alpha}(w)
\end{equation*}
for all $v \in L(\tilde{T}_{\tilde{v}'})$ and $w \in L(\tilde{T}) \setminus L(\tilde{T}_{\tilde{v}'})$. Consequently, $(\tilde{T}',\tilde{v}',\tilde{\alpha}')$ is a splitting subtree of $(\tilde{T},v_{0},\tilde{\alpha})$, where $\tilde{T}' = \tilde{T}_{\tilde{v}'}$ and $\tilde{\alpha}' = \tilde{\alpha} |_{L(\tilde{T}')}$. 
Clearly, 
\begin{equation*}
Q_{q}(T',v',\alpha') = Q_{q}(\tilde{T}',\tilde{v}',\tilde{\alpha}').
\end{equation*}
Let $(T'',v_{0},\alpha'')$ and $(\tilde{T}'',v_{0},\tilde{\alpha}'')$ be complementary trees for $(T',v',\alpha')$ and $(\tilde{T}',\tilde{v}',\tilde{\alpha}')$, respectively. Using Definition~\ref{def:q_poly} and Definition~\ref{def:complementary_tree}, one shows that
\begin{equation*}
Q_{q}(T'',v_{0},\alpha'') = q^{r_{*}} \, Q_{q}(T''-u,v_{0},\alpha''_{u}) = q^{r_{*}} \, Q_{q}(\tilde{T}'',v_{0},\tilde{\alpha}''),
\end{equation*}
where $u$ is the unique leaf of $T''$ with $\alpha''(u) = 1$ and $r_{*} = r(T'',v_{0},u)$. Therefore, by Theorem~\ref{thm:prod_formula_plucking}
\begin{equation*}
Q_{q}(T,v_{0},\alpha) = Q_{q}(T',v',\alpha') \cdot Q_{q}(T'',v_{0},\alpha'') = Q_{q}(\tilde{T}',\tilde{v}',\tilde{\alpha}') \cdot q^{r_{*}} \, Q_{q}(\tilde{T}'',v_{0},\tilde{\alpha}'') = q^{r_{*}} \, Q_{q}(\tilde{T},v_{0},\tilde{\alpha}),
\end{equation*}
and consequently $Q_{q}^{*}(T,v_{0},\alpha) = Q_{q}^{*}(\tilde{T},v_{0},\tilde{\alpha})$. Since $Q_{q}(T,v_{0},\alpha) \neq 0$ if and only if $Q_{q}(\tilde{T},v_{0},\tilde{\alpha}) \neq 0$, by Proposition~3.14 of \cite{DW2022}, $C$ is realizable if and only if $C\smallsetminus c$ is realizable.
\end{proof}

To prove Lemma~\ref{lem:removable_arc_beta}, we first establish relations between ends of arcs of a Catalan state $C$ and its maximal sequence $\mathbf{b}$ (see Lemma~\ref{lem:condi_bk_equals_n} and Lemma~\ref{lem:lr_relations}).

Given a crossingless connection $C$, let
\begin{equation*}
\mathcal{A}(C) = \{a_{1},a_{2},\ldots,a_{k}\}    
\end{equation*}
be the set of all its top returns, where the left ends $x_{i_{j}}$ of arcs $a_{j}$ satisfy condition $i_{1} > i_{2} > \cdots > i_{k}$. For $0 \leq j \leq k$, let
\begin{equation*}
\mathcal{A}(C,j) = \{a_{1},a_{2},\ldots,a_{j}\} \subseteq \mathcal{A}(C)    
\end{equation*}
be the set of $j$ rightmost top returns of $C$. If $\mathcal{A}(\tau_{j}(C))$ has at least $j$ elements, we define the crossingless connection 
\begin{equation*}
C_{(j)} = \tau_{j}(C) - \mathcal{A}(\tau_{j}(C),j)    
\end{equation*}
obtained from $\tau_{j}(C)$ after removing arcs $c \in \mathcal{A}(\tau_{j}(C),j)$ together with their ends. 

\begin{remark}
\label{rem:C_j}
We note that, if $C$ is a Catalan state of $L(m,n)$ with no bottom returns then $C_{(j)} \in \mathrm{Cat}(m-j,n)$ has no bottom returns for $j = 0,1,\ldots,m$. Indeed, since $C$ is realizable with no bottom returns, there is $\mathbf{b} \in \mathfrak{b}(C)$. Each subsequence $(b_{1},b_{2},\ldots,b_{j})$ of $\mathbf{b}$ corresponds to $j$ top returns of $\tau_{j}(C)$, so $\mathcal{A}(\tau_{j}(C))$ has at least $j$ elements and consequently $C_{(j)}$ is defined. Since $\tau_{j}(C)$ has $(n+2j)$ top-boundary points, the crossingless connection $C_{(j)}$ has $(n+2j)-2j = n$ top-boundary points. Moreover, $\tau_{j}(C)$ has $(m-j)$ left- and right-boundary points. Therefore, $C_{(j)}$ is as claimed.
\end{remark}

Let $C$ be a Catalan state of $L(m,n)$ with no bottom returns realized by $\mathbf{b} = (b_{1},b_{2},\ldots,b_{m})$ and let $C^{(j)} \in \mathrm{Cat}(j,n)$ be the Catalan state realized by its subsequence $\mathbf{b}^{(j)} = (b_{1},b_{2},\ldots,b_{j})$ for $j = 1,2,\ldots,m$. We say that an arc $c$ of $C$ has \emph{index $j$ relative to $\mathbf{b}$} if $j$ is the minimal integer such that $c = c'$ for some arc $c'$ of $C^{(j)}$ with none of its ends on the bottom boundary.

Using conventions and notations introduced above we state and prove the following lemma.

\begin{lemma}
\label{lem:catalan_state_subseq_b}
Let $C$ be a Catalan state with no bottom returns realized by the maximal sequence $\mathbf{b} = (b_{1},b_{2},\ldots,b_{m})$. Then, for all $0 \leq j \leq m$, Catalan state $C_{(j)}$ is realized by the maximal sequence $\mathbf{b}_{(j+1)} = (b_{j+1},b_{j+2},\ldots,b_{m})$.
\end{lemma}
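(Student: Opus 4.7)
I plan to prove the statement directly rather than by induction. The Kauffman state $s_{\mathbf{b}}$ on $L(m,n)$ splits naturally into two pieces: the restriction to the top $j$ rows (with markers $(b_{1}, \ldots, b_{j})$) and the restriction to the bottom $m-j$ rows (with markers $\mathbf{b}_{(j+1)}$). Smoothing the top $j$ rows yields a crossingless connection $R_{j}$ in a $j \times n$ strip, while smoothing the bottom $m-j$ rows yields the Catalan state $D_{j} \in \mathrm{Cat}(m-j,n)$ realized by $s_{\mathbf{b}_{(j+1)}}$, and $C$ is the vertical product $R_{j} * D_{j}$. By tracing the smoothings row by row, $R_{j}$ will contain exactly $j$ ``special arcs'' --- one per row, each located at positions $x_{b_{i}}, x_{b_{i}+1}$ (or at a corner when $b_{i} \in \{0, n\}$) --- together with an order-preserving bijection between the remaining $n$ boundary points of $R_{j}$ and the top of $D_{j}$.

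Applying $\tau_{j}$ folds the strip $R_{j}$ into the top boundary of $\tau_{j}(C)$, so each of the $j$ special arcs becomes a top return of $\tau_{j}(C)$. The central claim is that, under the maximality hypothesis on $\mathbf{b}$, these $j$ special arcs are precisely the $j$ rightmost top returns of $\tau_{j}(C)$, i.e., $\mathcal{A}(\tau_{j}(C), j)$. I will prove this by contradiction: any top return of $\tau_{j}(C)$ lying strictly to the right of all $j$ special arcs can be traced through $R_{j}$ and $D_{j}$ to produce an explicit modification of $(b_{1}, \ldots, b_{j}, \mathbf{b}_{(j+1)})$ --- shifting some $b_{i}$ rightward (strictly increasing the corresponding row's transition point) and compensating in $(b_{j+1}, \ldots, b_{m})$ --- that preserves the composite Catalan state $C$ but strictly increases $\|\mathbf{b}\|$, contradicting maximality.

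Granted this, removing $\mathcal{A}(\tau_{j}(C), j)$ from $\tau_{j}(C)$ deletes exactly the $j$ special arcs and their $2j$ top-boundary endpoints; the order-preserving bijection from $R_{j}$ then identifies the remaining top-boundary points of $C_{(j)}$ with the top of $D_{j}$, and all other pairings agree, giving $C_{(j)} = D_{j}$. Hence $\mathbf{b}_{(j+1)}$ realizes $C_{(j)}$. Maximality of $\mathbf{b}_{(j+1)}$ for $C_{(j)}$ is then immediate: any $\mathbf{b}' \in \mathfrak{b}(C_{(j)})$ with $\|\mathbf{b}'\| > \|\mathbf{b}_{(j+1)}\|$ could be prepended with $(b_{1}, \ldots, b_{j})$ to realize $C$ with a strictly larger total sum, contradicting maximality of $\mathbf{b}$. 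The hard part will be the exchange argument showing that the $j$ special arcs are precisely $\mathcal{A}(\tau_{j}(C), j)$; the swap construction requires careful case analysis of how a hypothetical further-right top return of $\tau_{j}(C)$ arises from the interaction of $R_{j}$ and $D_{j}$, and how the markers can be modified to absorb it while preserving $C$.
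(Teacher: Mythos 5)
Your decomposition $C = R_{j} * D_{j}$, the observation that deleting the $j$ ``special arcs'' from $\tau_{j}(C)$ recovers $D_{j}$, and the prepending argument for maximality of $\mathbf{b}_{(j+1)}$ are all sound (the last point is one the paper leaves implicit). Your route also genuinely differs from the paper's: the paper inducts on $m = \mathrm{ht}(C)$, so that maximality has to be confronted only once, in the one-row case $j = 1$ (checking that the special arc of row $1$ is the rightmost top return of $\tau_{1}(C)$); the step from $j-1$ to $j$ is then a pure counting argument --- the $j-1$ additional top-boundary points of $\tau_{j}(C)$ to the right of the right end of the row-$1$ special arc $a$ can absorb at most $j-1$ top returns, which forces $\mathcal{A}(\tau_{j-1}(C''),j-1) \cup \{a\} = \mathcal{A}(\tau_{j}(C),j)$ with no further appeal to maximality.

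The genuine gap is your step (3): the identity ``special arcs $= \mathcal{A}(\tau_{j}(C),j)$'' is the entire mathematical content of the lemma, and the exchange argument you propose for it is both unexecuted and set up on the wrong negation. The special arcs fail to be the $j$ rightmost top returns as soon as some lifted top return of $D_{j}$ has its left end to the right of the left end of \emph{some} special arc; your contradiction hypothesis only supplies a top return to the right of \emph{all} of them, so the mixed configuration (a lifted top return of $D_{j}$ whose left end sits between the left ends of two special arcs) is not addressed. Moreover, the decisive point of the exchange --- that after shifting some $b_{i}$ rightward and ``compensating'' in $(b_{j+1},\ldots,b_{m})$ the quantity $\Vert\mathbf{b}\Vert$ \emph{strictly} increases --- is precisely what must be proved and is not argued: if the compensation decreases later entries, nothing in your sketch rules out a net change of zero. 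There is also a tracing issue: the offending top return of $\tau_{j}(C)$ may be a top return of $D_{j}$ joining non-adjacent points, so it is not the special arc of any single row of the lower block, and it is not clear which $b_{i'}$ your swap should modify. These are not cosmetic: the identity is false for non-maximal realizing sequences, so maximality must enter through a verifiable mechanism. The least painful repair is the paper's: establish the claim honestly for $j=1$, where only one row and one special arc are involved, and let the boundary-point count carry the induction to general $j$.
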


\begin{proof}
First, we notice that for any $m \geq 0$ and $j = 0$, clearly $C_{(0)} = C$ is realized by the maximal sequence $\mathbf{b}_{(1)} = \mathbf{b}$. For $m \geq 1$ and $j = 1$, let $C' \in \mathrm{Cat}(1,n)$ be realized by the maximal sequence $(b_{1})$. Then $C = C' * C''$, where $C''$ is a Catalan state realized by $\mathbf{b}_{(2)}$. After considering cases for $C'$, one concludes that
\begin{equation*}
C'' = \tau_{1}(C) - \mathcal{A}(\tau_{1}(C),1),  
\end{equation*}
so $C_{(1)} = C''$ is realized by the maximal sequence $\mathbf{b}_{(2)}$.

We prove lemma by induction on $m = \mathrm{ht}(C)$. As it was shown above, statement of the lemma is true when $m = 0,1$. Let $m \geq 2$ and assume that the lemma holds for all realizable Catalan states $C'$ with no bottom returns and $\mathrm{ht}(C') < m$. Let $C \in \mathrm{Cat}(m,n)$ be a Catalan state realized by $\mathbf{b}$. As it has already been shown above $C_{(j)}$ is realized by the maximal sequence $\mathbf{b}_{(j+1)}$ for $j = 0,1$. 

For $2 \leq j \leq m$, let $C = C' * C''$, where $C' \in \mathrm{Cat}(1,n)$ is realized by $\mathbf{b}' = (b_{1})$ and $C'' = C_{(1)} \in \mathrm{Cat}(m-1,n)$ is realized by $\mathbf{b}'' = \mathbf{b}_{(2)}$. By the induction hypothesis, $C''_{(j-1)}$ is realized by the maximal sequence $\mathbf{b}''_{(j)} = (b_{j+1},b_{j+2},\ldots,b_{m})$. So, it suffices to show that $C''_{(j-1)} = C_{(j)}$, i.e.,
\begin{equation}
\label{eqn:pf_catalan_state_subseq_b_1}
\tau_{j-1}(C'') - \mathcal{A}(\tau_{j-1}(C''),j-1) = \tau_{j}(C) - \mathcal{A}(\tau_{j}(C),j).
\end{equation}
Since $C'' = \tau_{1}(C) - \mathcal{A}(\tau_{1}(C),1)$, 
\begin{equation}
\label{eqn:pf_catalan_state_subseq_b_2}
\tau_{j-1}(C'') = \tau_{j-1}(\tau_{1}(C) - \mathcal{A}(\tau_{1}(C),1)) = \tau_{j}(C) - \{a\}
\end{equation}
for some $a \in \mathcal{A}(\tau_{j}(C))$ that corresponds the unique top return in $\mathcal{A}(\tau_{1}(C),1)$. Since
\begin{equation*}
\tau_{j}(C) = \tau_{j-1}(\tau_{1}(C)),
\end{equation*}
the numbers of top-boundary points of $\tau_{j}(C)$ and $\tau_{1}(C)$ differ by $2(j-1)$ with $(j-1)$ of them are to the right of the right end $q$ of $a$. As each of these $(j-1)$ points is an end of an arc of $\tau_{j}(C)$, it follows that $\tau_{j}(C)$ has at most $(j-1)$ top returns with their left ends to the right of $q$. Therefore, $a \in \mathcal{A}(\tau_{j}(C),j)$ and, using analogous arguments for the top returns in $\mathcal{A}(\tau_{j-1}(C''),j-1)$, one also shows that
\begin{equation*}
\mathcal{A}(\tau_{j-1}(C''),j-1) \subset \mathcal{A}(\tau_{j}(C),j).
\end{equation*}
Since $a \notin \mathcal{A}(\tau_{j-1}(C''),j-1)$, it follows that
\begin{equation}
\label{eqn:pf_catalan_state_subseq_b_3}
\mathcal{A}(\tau_{j-1}(C''),j-1) \cup \{a\} = \mathcal{A}(\tau_{j}(C),j).
\end{equation}
Hence, \eqref{eqn:pf_catalan_state_subseq_b_1} is a consequence of \eqref{eqn:pf_catalan_state_subseq_b_2} and \eqref{eqn:pf_catalan_state_subseq_b_3}. Since $\mathbf{b}''_{(j)} = \mathbf{b}_{(j+1)}$, it follows that $C_{(j)}$ is realized by the maximal sequence $\mathbf{b}_{(j+1)}$ for $2 \leq j \leq m$.
\end{proof}

Define a \emph{coordinate function} $\iota_{m,n}$ on the set of top-, left-, and right-boundary points of $\mathrm{R}^{2}_{m,n,n}$ by
\begin{equation*}
\iota_{m,n}(p) =
\begin{cases}
i, & \text{if}\ p = y'_{i} \ \text{for some} \ 1-n \leq i \leq m, \\
-n+1-i, & \text{if}\ p = y_{i} \ \text{for some} \ 1 \leq i \leq m.
\end{cases}
\end{equation*}
Given $C \in \mathrm{Cat}(m,n)$ with no bottom returns which has an arc $c$ with none of its ends $p,q$ on the bottom boundary, we say that $p$ is the \emph{left end} and $q$ is the \emph{right end} of $c$ if
\begin{equation*}
\iota_{m,n}(p) < \iota_{m,n}(q)
\end{equation*}
and we write $c = (p,q)$. Let $\mathbf{b} = (b_{1},\ldots,b_{m})$ be a sequence that realizes $C$. For $1 \leq j \leq m$ define
\begin{equation*}
l_{j,n}(\mathbf{b}) = \iota_{m,n}(p) \quad \text{and} \quad r_{j,n}(\mathbf{b}) = \iota_{m,n}(q),
\end{equation*}
where $(p,q)$ is an arc of $C$ with index $j$ relative to $\mathbf{b}$. 

\begin{lemma}
\label{lem:condi_bk_equals_n}
Let $\mathbf{b} = (b_{1},\ldots,b_{m})$ be the maximal sequence for a realizable $C \in \mathrm{Cat}(m,n)$ with no bottom returns. For every $1 \leq j \leq m$,
\begin{equation*}
l_{j,n}(\mathbf{b}) + r_{j,n}(\mathbf{b}) \geq 1 \ \text{if and only if} \ b_{j} = n.
\end{equation*}
\end{lemma}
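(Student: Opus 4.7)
The plan is to first identify the index-$j$ arc of $C$ through a local analysis of the Kauffman smoothings in row $j$. Since the markers in row $j$ form the pattern $(+1,\ldots,+1,-1,\ldots,-1)$ with $b_j$ plus-ones, tracing the pairings at the $n$ crossings of row $j$ shows that exactly one new arc is completed at row $j$: when $b_j = n$ it joins $y'_j$ to the terminal of the above-$n$ strand traced upward through rows $j-1,\ldots,1$; when $b_j = 0$ it joins $y_j$ to the terminal of the above-$1$ strand; and when $0 < b_j < n$ it is an internal cap joining the terminals $t_1, t_2$ of the above-$b_j$ and above-$(b_j+1)$ strands. This uniquely identifies the index-$j$ arc, and its endpoints give $l_{j,n}(\mathbf{b})$ and $r_{j,n}(\mathbf{b})$ via $\iota_{m,n}$.

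For the forward direction ($b_j = n \Rightarrow l+r \geq 1$), the right endpoint $y'_j$ has $\iota = j$, so it suffices to bound the other endpoint. Tracing the above-$n$ strand backwards, the column index $c(k)$ shifts by exactly $\pm 1$ per intermediate row (according to how $c(k)$ sits relative to $b_{k-1}$), and the trajectory terminates on the top, left, or earlier right boundary. A direct bookkeeping on column shifts shows the terminal has $\iota \geq 1 - j$: if it is $y'_i$ then $\iota = i \geq 1$; if it is $x_k$ then reaching column $n$ at row $j$ from column $k$ over $j-1$ rows forces $k \geq n - j + 1$, giving $\iota = k - n \geq 1 - j$; and if it is $y_i$ on the left then reaching column $n$ from column $1$ requires $j - i \geq n$, giving $\iota = -n + 1 - i \geq 1 - j$. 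Summing, $l + r \geq 1$. The case $b_j = 0$ of the converse is handled symmetrically with the analogous shift-counting argument, which gives $l + r \leq 0$ automatically for the arc joining $y_j$ to its other endpoint, regardless of which boundary hosts that endpoint.

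The main obstacle is the case $0 < b_j < n$ of the converse, where both cap endpoints lie above row $j$ and the bound $l + r \leq 0$ genuinely requires maximality (one can exhibit non-maximal realizing sequences of the same $C$ for which the coordinate sum of a cap exceeds zero). Here I argue by contradiction: suppose $\iota(t_1) + \iota(t_2) \geq 1$, and construct a modified realizing sequence $\mathbf{b}^{\ast}$ for the same $C$ with $\Vert\mathbf{b}^{\ast}\Vert > \Vert\mathbf{b}\Vert$. The construction begins by incrementing $b_j$ to $b_j + 1$, which locally replaces the cap $t_1 t_2$ by a new cap $t_2 t_3$ (where $t_3$ is the terminal of the above-$(b_j+2)$ strand) and swaps $t_1$ and $t_3$ as the source of the downward strand at below-$(b_j+1)$; the positivity $\iota(t_1) + \iota(t_2) \geq 1$ provides the geometric room on the right side to propagate compensating adjustments at rows $k > j$ so that the resulting Catalan state is again $C$. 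The technical heart of the argument is verifying that these cascading adjustments globally preserve the arc structure, which should follow by applying Lemma~\ref{lem:catalan_state_subseq_b} to the relevant subsequences and appealing to the maximality of $\mathbf{b}_{(j+1)}$ for $C_{(j)}$. Once $\mathbf{b}^{\ast}$ is shown to realize $C$, the strict inequality $\Vert\mathbf{b}^{\ast}\Vert > \Vert\mathbf{b}\Vert$ contradicts maximality and completes the proof.
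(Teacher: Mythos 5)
Your identification of the index-$j$ arc via the row-$j$ smoothing pattern is correct, and your strand-tracing bookkeeping does establish the two easy implications ($b_j=n\Rightarrow l+r\geq 1$ and $b_j=0\Rightarrow l+r\leq 0$) without needing maximality; this part is a legitimate, more direct alternative to the paper's argument. But the case $0<b_j<n$, which you correctly flag as the heart of the lemma, is not proved, and the exchange construction you sketch cannot work as described. If you change only $b_j$ to $b_j+1$ and leave rows $1,\ldots,j-1$ untouched, the terminals $t_1,t_2,t_3$ of the above-$b_j$, above-$(b_j+1)$, above-$(b_j+2)$ strands are unchanged, so the state realized by $\mathbf{b}^{\ast}$ contains the completed arc $t_2t_3$ in place of the arc $t_1t_2$ of $C$. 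Both of these arcs live entirely in rows $1,\ldots,j$: their endpoints are boundary points or terminals determined above row $j$. No ``compensating adjustments at rows $k>j$'' can ever restore $t_1t_2$ or delete $t_2t_3$, since $t_2$ is already capped off by the wrong partner. Hence $\mathbf{b}^{\ast}$ does not realize $C$ for any choice of $b^{\ast}_{j+1},\ldots,b^{\ast}_m$, and the contradiction with maximality is never reached. A correct exchange argument would have to resurface the arc $t_1t_2$ by modifying rows $\leq j$ as well, which is exactly the global difficulty you have deferred.

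For comparison, the paper avoids any surgery on the sequence. It proves the full biconditional by induction on $m$: Lemma~\ref{lem:catalan_state_subseq_b} guarantees that $C_{(1)}$ is realized by the \emph{maximal} sequence $(b_2,\ldots,b_m)$, so the inductive hypothesis applies to the index-$(j-1)$ arc of $C_{(1)}$, and one only needs to track how the coordinates $\iota$ of the endpoints of the index-$j$ arc change when passing from $C$ to $C_{(1)}$. This splits into three cases according to the position of the index-$1$ arc relative to the index-$j$ arc (entirely to its left, nested inside it, or entirely to its right); the sum $l+r$ shifts by $-2$, $0$, or $+2$ respectively, and maximality of $\mathbf{b}$ is used only to rule out borderline configurations (e.g.\ to show $\iota(p)+\iota(q)\geq 3$ rather than $\geq 1$ in the first case). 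If you want to salvage your approach, you would need either to carry out a genuine re-routing of rows $\leq j$, or to adopt the paper's inductive reduction for the hard case.
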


\begin{proof}
For $m = 0$ or $n = 0$ the statement is clearly true. Thus, we assume that $n \geq 1$ and we prove lemma by induction on $m \geq 1$. For $m = 1$, let $(p,q)$ be the unique arc of $C$ with neither $p$ nor $q$ on the bottom boundary. Then $\iota_{1,n}(q) = \iota_{1,n}(p)+1$, hence 
\begin{equation*}
l_{1,n}(\mathbf{b}) + r_{1,n}(\mathbf{b}) = \iota_{1,n}(p) + \iota_{1,n}(q) = 2 \iota_{1,n}(p) + 1 \geq 1
\end{equation*}
if and only if $\iota_{1,n}(p) \geq 0$, i.e., $p = x_{n}$ and $q = y'_{1}$. Consequently, $l_{1,n}(\mathbf{b}) + r_{1,n}(\mathbf{b}) \geq 1$ if and only if $b_{1} = n$.

Assume that $m > 1$ and the statement is true for all realizable $C \in \mathrm{Cat}(m-1,n)$ with no bottom returns. Consider a realizable $C \in \mathrm{Cat}(m,n)$ with no bottom returns realized by the maximal sequence $\mathbf{b} = (b_{1},\ldots,b_{m})$. By Lemma~\ref{lem:catalan_state_subseq_b}, $C' = C_{(1)} \in \mathrm{Cat}(m-1,n)$ has no bottom returns and it is realized by $\mathbf{b}' = (b'_{1},\ldots,b'_{m-1}) = (b_{2},\ldots,b_{m})$. 

For $j = 1$, an analogous argument as for case $m = 1$ shows that
\begin{equation*}
l_{1,n}(\mathbf{b}) + r_{1,n}(\mathbf{b}) \geq 1 \ \text{if and only if} \ b_{1} = n.
\end{equation*}
For $2 \leq j \leq m$, let $(p,q),(p',q')$, and $(p'',q'')$ be arcs of index $j,j-1$, and $1$ relative to $\mathbf{b},\mathbf{b}'$, and $\mathbf{b}$, respectively. For convenience, we also let $\iota(x) = \iota_{m,n}(x)$ and $\iota'(x) = \iota_{m-1,n}(x)$ for a boundary point $x$. There are three cases shown in Figure~\ref{fig:pf_lem_condi_bk_equals_n}(a)-(c) for the position of $(p'',q'')$ relative to $(p,q)$, i.e.,
\begin{itemize}
\item[i)] $\iota(q'') < \iota(p)$,
\item[ii)] $\iota(p) < \iota(p'') < \iota(q'') < \iota(q)$, or
\item[iii)] $\iota(q) < \iota(p'')$.
\end{itemize}

\begin{figure}[htb]
\centering
\includegraphics[scale=1]{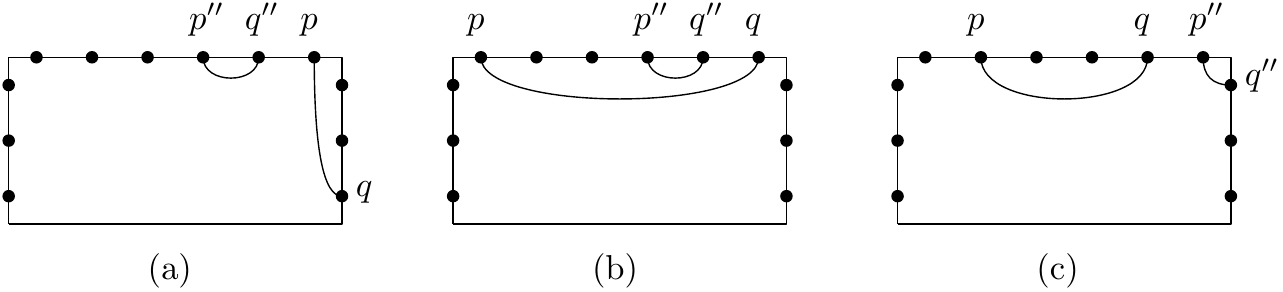}
\caption{Cases for relative positions of $(p,q)$ and $(p'',q'')$}
\label{fig:pf_lem_condi_bk_equals_n}
\end{figure}

\medskip

In case i), as it is easy to see
\begin{equation*}
\iota'(p') = \iota(p)-1 \ \text{and} \ \iota'(q') = \iota(q)-1.
\end{equation*}
We show that $\iota(q) > 0$. Suppose that $\iota(q) \leq 0$, then there is a top return $(x_{i},x_{i+1})$ of $C$ with $i$ satisfying
\begin{equation*}
i > n+\iota(p) > n+\iota(q'') > b_{1}.
\end{equation*}
However, this is impossible since $\mathbf{b}$ is the maximal sequence. Furthermore, we claim that
\begin{equation*}
\iota(p)+\iota(q) \geq 3.
\end{equation*}
Indeed, this is true when $\iota(p) \geq 1$. For $\iota(p) \leq 0$, if $\iota(q) \leq 1-\iota(p)$ then $C$ has either an arc $(x_{i},x_{i+1})$ with $i > b_{1}$ or an arc $(x_{n},y'_{1})$. This is impossible since $\mathbf{b}$ is the maximal sequence. Therefore,
\begin{equation*}
\iota(q) > 1-\iota(p)
\end{equation*}
and, since the number of boundary points between $p$ and $q$ is even, it must also be $\iota(q) > 2-\iota(p)$, i.e., $\iota(p)+\iota(q) \geq 3$ as claimed. Consequently,
\begin{equation*}
l_{j-1,n}(\mathbf{b}') + r_{j-1,n}(\mathbf{b}') = \iota'(p')+\iota'(q') = \iota(p)+\iota(q)-2 \geq 1, 
\end{equation*}
and, using the induction hypothesis for $C'$, we see that $b'_{j-1} = n$. Thus,
\begin{equation*}
l_{j,n}(\mathbf{b}) + r_{j,n}(\mathbf{b}) = \iota(p)+\iota(q) \geq 3 > 1
\end{equation*}
and $b_{j} = b'_{j-1} = n$.

\medskip

In case ii), as it is easy to see
\begin{equation*}
\iota'(p') = \iota(p)+1 \ \text{and} \ \iota'(q') = \iota(q)-1.
\end{equation*}
Therefore, by the induction hypothesis for $C'$,
\begin{equation*}
l_{j,n}(\mathbf{b}) + r_{j,n}(\mathbf{b}) = \iota(p)+\iota(q) = \iota'(p')+\iota'(q') = l_{j-1,n}(\mathbf{b}') + r_{j-1,n}(\mathbf{b}') \geq 1
\end{equation*}
if and only if $b_{j} = b'_{j-1} = n$.

\medskip

Finally, for the case iii), as it is easy to see
\begin{equation*}
\iota'(p') = \iota(p)+1 \ \text{and} \ \iota'(q') = \iota(q)+1.
\end{equation*} 
Clearly, $\iota(q) < 0$ and consequently $\iota(p) \leq \iota(q)-1 < -1$. Therefore,
\begin{equation*}
l_{j-1,n}(\mathbf{b}') + r_{j-1,n}(\mathbf{b}') = \iota'(p')+\iota'(q') = \iota(p)+\iota(q)+2 < 1, 
\end{equation*}
and using the induction hypothesis for $C'$, we see that $b'_{j-1} < n$. Thus,
\begin{equation*}
l_{j,n}(\mathbf{b}) + r_{j,n}(\mathbf{b}) = \iota(p)+\iota(q) < -1 < 1
\end{equation*}
and $b_{j} = b'_{j-1} < n$.

\medskip

Therefore, we showed that for any $m \geq 0$ and a Catalan state $C \in \mathrm{Cat}(m,n)$ realized by its maximal sequence $\mathbf{b} = (b_{1},b_{2},\ldots,b_{m})$,
\begin{equation*}
l_{j,n}(\mathbf{b}) + r_{j,n}(\mathbf{b}) \geq 1 \ \text{if and only if} \ b_{j} = n \quad \text{for} \ j = 1,2,\ldots,m.
\end{equation*}
\end{proof}

\begin{lemma}
\label{lem:lr_relations}
Let $\mathbf{b} = (b_{1},\ldots,b_{m})$ be the maximal sequence for a realizable $C \in \mathrm{Cat}(m,n)$ with no bottom returns.
\begin{enumerate}
\item[i)] If $b_{j} < n$, then $l_{j,n}(\mathbf{b}) = b_{j}-n-(j-1)$.
\item[ii)] If $b_{j} = n$, then $r_{j,n}(\mathbf{b}) = j$.
\end{enumerate}
\end{lemma}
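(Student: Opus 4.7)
The plan is induction on $m$, closely paralleling the proof of Lemma~\ref{lem:condi_bk_equals_n}. For the base case $m=1$, $j=1$, I would directly analyze the single row: smoothing the $b_1$ positive markers on the left and $n-b_1$ negative markers on the right yields a Catalan state $C^{(1)}$ whose unique arc with no bottom ends is $(x_{b_1}, x_{b_1+1})$, under the conventions $x_0 = y_1$ and $x_{n+1} = y'_1$. Reading off the coordinate function $\iota_{1,n}$ then gives $l_{1,n}(\mathbf{b}) = b_1 - n$ when $b_1 < n$ and $r_{1,n}(\mathbf{b}) = 1$ when $b_1 = n$, matching (i) and (ii). The same computation handles $j = 1$ for arbitrary $m$, since the index-$1$ arc depends only on $b_1$ through $C^{(1)}$.

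For the inductive step with $m \geq 2$ and $2 \leq j \leq m$, I would pass to $C' = C_{(1)} \in \mathrm{Cat}(m-1, n)$, which by Lemma~\ref{lem:catalan_state_subseq_b} is realized by its maximal sequence $\mathbf{b}' = (b_2, \ldots, b_m)$, so that $b'_{j-1} = b_j$. The arc of index $j-1$ relative to $\mathbf{b}'$ in $C'$ corresponds to the arc of index $j$ relative to $\mathbf{b}$ in $C$, and the relationship between their coordinates is exactly the three-case analysis already carried out in the proof of Lemma~\ref{lem:condi_bk_equals_n}, which compares the position of the index-$1$ arc $(p'', q'')$ to the index-$j$ arc $(p, q)$.

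I would then invoke the inductive hypothesis at index $j-1$ for $C'$ and chase the $\pm 1$ coordinate shifts: case (i) forces $b_j = b'_{j-1} = n$, and combined with $\iota(q) = \iota'(q') + 1$ and the inductive value $\iota'(q') = j-1$ gives $\iota(q) = j$; case (iii) forces $b_j = b'_{j-1} < n$, and combined with $\iota(p) = \iota'(p') - 1$ and $\iota'(p') = b_j - n - (j-2)$ gives $\iota(p) = b_j - n - (j-1)$; case (ii) allows either subcase, and the shifts $\iota(p) = \iota'(p') - 1$ (when $b_j < n$) and $\iota(q) = \iota'(q') + 1$ (when $b_j = n$) yield the same conclusions by identical arithmetic. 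This establishes (i) and (ii) for $C$.

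The main obstacle is not geometric — the three-case decomposition and the dichotomy $b_j = n$ versus $b_j < n$ are already established in Lemma~\ref{lem:condi_bk_equals_n} — but bookkeeping: carefully matching the $\pm 1$ coordinate shifts against the $-(j-1)$ versus $-(j-2)$ offset in the formula, and consistently using $b'_{j-1} = b_j$. I do not expect new technical ingredients beyond those already supplied by Lemma~\ref{lem:condi_bk_equals_n} and Lemma~\ref{lem:catalan_state_subseq_b}.
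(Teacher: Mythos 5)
Your argument is correct, but it is organized differently from the paper's. The paper does not prove Lemma~\ref{lem:lr_relations} by induction on $m$: for part (i) it passes in a single step from $C$ to $C_{(j-1)}$, which by Lemma~\ref{lem:catalan_state_subseq_b} is realized by the maximal sequence $\mathbf{b}_{(j)}=(b_{j},\dots,b_{m})$; there the arc $c$ has index $1$, so its left end has coordinate $b_{j}-n$ outright, and the formula follows by counting how many boundary points to the right of $p$ are created by $\tau_{j-1}$ (namely $j-1$) and destroyed by deleting $\mathcal{A}(\tau_{j-1}(C),j-1)$ (namely $2(j-1)$, which requires the separate observation that all $j-1$ of those top returns have their left ends to the right of $p$). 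Part (ii) is handled symmetrically by tracking the right end above $q$. Your route instead peels off one row at a time and recycles the three-case coordinate-shift analysis from the proof of Lemma~\ref{lem:condi_bk_equals_n}, using the dichotomy established there ($b_{j}=n$ in case (i), $b_{j}<n$ in case (iii), either in case (ii)) to decide which shift to apply; the arithmetic you indicate checks out in all three cases, and your base case $(x_{b_{1}},x_{b_{1}+1})$ is exactly the one the paper uses. What your version buys is that every coordinate shift you need has already been verified inside Lemma~\ref{lem:condi_bk_equals_n}, so you avoid justifying the "all removed returns lie to the right of $p$" step; what the paper's version buys is brevity, replacing an induction by one direct computation. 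Both rest on the same key input, Lemma~\ref{lem:catalan_state_subseq_b}.
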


\begin{proof}
If $b_{j} < n$ then clearly the left end $p$ of the arc $c$ of index $j$ relative to $\mathbf{b}$ is on the left or top boundary by Lemma~\ref{lem:condi_bk_equals_n}. Since $\tau_{j-1}(C)$ has $j-1$ additional top-boundary points to the right of $p$ and all arcs in $\mathcal{A}(\tau_{j-1}(C),j-1)$ have their left ends to the right of $p$. Hence, 
\begin{equation*}
\iota_{m-j+1,n}(p) = l_{j,n}(\mathbf{b})-(j-1)+2(j-1)    
\end{equation*}
in $C_{(j-1)} = \tau_{j-1}(C)-\mathcal{A}(\tau_{j-1}(C),j-1)$, which is the Catalan state realized by $\mathbf{b}_{(j)} = (b_{j},\ldots,b_{m})$ by Lemma~\ref{lem:catalan_state_subseq_b}. Since the index of $c$ relative to $\mathbf{b}_{(j)}$ is $1$, 
\begin{equation*}
l_{1,n}(\mathbf{b}_{(j)}) = \iota_{m-j+1,n}(p) = l_{j,n}(\mathbf{b})+(j-1).    
\end{equation*}
Finally, after we notice that $l_{1,n}(\mathbf{b}_{(j)}) = b_{j}-n$, i) follows.

If $b_{j} = n$ then clearly the right end $q$ of the arc $c$ of index $j$ relative to $\mathbf{b}$ is on the right boundary by Lemma~\ref{lem:condi_bk_equals_n}. Comparing to $C$, $\tau_{j-1}(C)$ has $j-1$ points less on its right boundary above $q$. Hence, 
\begin{equation*}
\iota_{m-j+1,n}(q) = r_{j,n}(\mathbf{b})-(j-1)
\end{equation*}
in $C_{(j-1)} = \tau_{j-1}(C)-\mathcal{A}(\tau_{j-1}(C),j-1)$. By the definition, index of $c$ relative to $\mathbf{b}_{(j)}$ is $1$, so 
\begin{equation*}
r_{1,n}(\mathbf{b}_{(j)}) = \iota_{m-j+1,n}(q) = r_{j,n}(\mathbf{b})-(j-1). 
\end{equation*}
Since $r_{1,n}(\mathbf{b}_{(j)}) = 1$, statement ii) follows.
\end{proof}

As a consequence of Lemma~\ref{lem:condi_bk_equals_n} and Lemma~\ref{lem:lr_relations}, one can determine $\beta(C)$ and the maximal sequence $\mathbf{b}$ in terms of ends of arcs of a realizable Catalan state $C$ with no bottom returns. Let $S(C)$ be the set of all arcs $(p,q)$ of $C$ with neither $p$ nor $q$ on the bottom boundary.

\begin{corollary}
\label{cor:lr_relations}
Let $C \in \mathrm{Cat}(m,n)$ be a realizable Catalan state with no bottom returns. Then
\begin{equation*}
\beta(C) = mn + \frac{m(m-1)}{2} + \sum_{(p,q) \in S(C)} \min\{\iota_{m,n}(p),1-\iota_{m,n}(q)\}.
\end{equation*}
\end{corollary}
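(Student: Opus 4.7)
The plan is to extract a closed-form expression for each individual term $b_j$ of the maximal sequence $\mathbf{b} = (b_1, b_2, \ldots, b_m)$ of $C$ in terms of the endpoints of the arc of $C$ with index $j$ relative to $\mathbf{b}$, and then to sum these identities over $j = 1, \ldots, m$. The key identity I would aim for is
\[
b_j = (n + j - 1) + \min\{\iota_{m,n}(p_j),\, 1 - \iota_{m,n}(q_j)\},
\]
where $(p_j, q_j) \in S(C)$ is the arc of index $j$. This follows from a two-case analysis using the preceding lemmas. If $b_j = n$, then Lemma~\ref{lem:lr_relations}(ii) gives $\iota_{m,n}(q_j) = r_{j,n}(\mathbf{b}) = j$, while Lemma~\ref{lem:condi_bk_equals_n} implies $\iota_{m,n}(p_j) + \iota_{m,n}(q_j) \geq 1$, hence $\iota_{m,n}(p_j) \geq 1 - \iota_{m,n}(q_j)$; the minimum therefore equals $1 - j$, which agrees with $b_j - (n + j - 1)$. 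If instead $b_j < n$, then Lemma~\ref{lem:lr_relations}(i) gives $\iota_{m,n}(p_j) = l_{j,n}(\mathbf{b}) = b_j - n - (j - 1)$, while Lemma~\ref{lem:condi_bk_equals_n} forces $\iota_{m,n}(p_j) + \iota_{m,n}(q_j) \leq 0$, so $\iota_{m,n}(p_j) < 1 - \iota_{m,n}(q_j)$; the minimum therefore equals $\iota_{m,n}(p_j) = b_j - (n + j - 1)$.

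Next I would verify that $j \mapsto (p_j, q_j)$ is a bijection from $\{1, 2, \ldots, m\}$ onto $S(C)$. A counting argument suffices: the Catalan state $C$ has $m + n$ arcs in total, and since $C$ has no bottom returns, exactly $n$ of them meet the bottom boundary, so $|S(C)| = m$. By the definition of the index of an arc, each arc of $S(C)$ is labelled by a unique minimal $j \in \{1, \ldots, m\}$ at which both of its endpoints appear in $C^{(j)}$ with neither on the bottom boundary, and conversely each $j$ contributes exactly one such arc; this gives the claimed bijection.

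Summing the displayed identity over $j = 1, \ldots, m$ and re-indexing the right-hand sum via the bijection then yields
\[
\beta(C) = \sum_{j=1}^{m}(n + j - 1) + \sum_{(p,q)\in S(C)} \min\{\iota_{m,n}(p),\, 1 - \iota_{m,n}(q)\} = mn + \frac{m(m-1)}{2} + \sum_{(p,q)\in S(C)} \min\{\iota_{m,n}(p),\, 1 - \iota_{m,n}(q)\},
\]
which is the desired formula. The only conceptual step is the recognition that the two cases of Lemmas~\ref{lem:condi_bk_equals_n} and~\ref{lem:lr_relations} package into a single $\min$ expression; once this is observed, the summation and the passage from indices to arcs are routine, so I do not expect any serious obstacle beyond the case-by-case bookkeeping.
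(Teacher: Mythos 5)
Your proposal is correct and follows essentially the same route as the paper: both proofs combine Lemma~\ref{lem:lr_relations} with Lemma~\ref{lem:condi_bk_equals_n} via the same two-case split on $b_j = n$ versus $b_j < n$ to package each term as $(n+j-1) + \min\{\iota_{m,n}(p_j),\,1-\iota_{m,n}(q_j)\}$ and then sum over $j$. Your explicit verification that $j \mapsto (p_j,q_j)$ is a bijection onto $S(C)$ is a detail the paper leaves implicit, but it is a correct and welcome addition rather than a divergence.
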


\begin{proof}
By the definition of $\beta(C)$ and Lemma~\ref{lem:lr_relations}(i),
\begin{equation*}
\beta(C) = \sum_{\{j: b_{j} = n\}} b_{j} + \sum_{\{j: b_{j} < n\}} b_{j} = \sum_{\{j: b_{j} = n\}} n + \sum_{\{j: b_{j} < n\}} (l_{j,n}(\mathbf{b}) + j-1 + n).
\end{equation*}
It follows that
\begin{equation*}
\beta(C) = mn + \sum_{\{j: b_{j} < n\}} (l_{j,n}(\mathbf{b}) + j-1) = mn + \frac{m(m-1)}{2} + \sum_{\{j: b_{j} < n\}} l_{j,n}(\mathbf{b}) - \sum_{\{j: b_{j} = n\}} (j-1)
\end{equation*}
and by Lemma~\ref{lem:lr_relations}(ii),
\begin{equation*}
\beta(C) = mn + \frac{m(m-1)}{2} + \sum_{\{j: b_{j} < n\}} l_{j,n}(\mathbf{b}) + \sum_{\{j: b_{j} = n\}} (1-r_{j,n}(\mathbf{b})).
\end{equation*}
We can rewrite the above equation using Lemma~\ref{lem:condi_bk_equals_n} as
\begin{equation*}
\beta(C) = mn + \frac{m(m-1)}{2} + \sum_{j=1}^{m} \min\{l_{j,n}(\mathbf{b}),1-r_{j,n}(\mathbf{b})\},
\end{equation*}
so the statement of lemma follows.
\end{proof}

\begin{figure}[ht]
\centering
\includegraphics[scale=1]{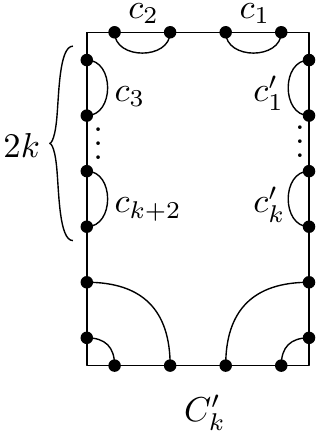}
\caption{Catalan state $C'_{k}$}
\label{fig:lr_relations}
\end{figure}

\begin{example}
\label{ex:lr_relations}
We illustrate Corollary~\ref{cor:lr_relations} by finding $\beta(C_{k}')$ for Catalan state $C'_{k} \in \mathrm{Cat}(2k+2,4)$ shown in Figure~\ref{fig:lr_relations}. 
Let $c_{i} = (p_{i},q_{i})$ and $c'_{j} = (p'_{j},q'_{j})$ be arcs as shown in Figure~\ref{fig:lr_relations}, where $i = 1,2,\ldots,k+2$ and $j = 1,2,\ldots,k$. For an arc $c = (p,q) \in S(C'_{k})$ define
\begin{equation*}
m(c) = \min\{\iota_{m',4}(p),1-\iota_{m',4}(q)\}, 
\end{equation*}
where $m' = 2k+2$. Then
\begin{eqnarray*}
m(c_{1}) &=& \min\{\iota_{m',4}(y'_{-1}),1-\iota_{m',4}(y'_{0})\} = \min\{-1,1-0\} = -1, \\
m(c_{2}) &=& \min\{\iota_{m',4}(y'_{-3}),1-\iota_{m',4}(y'_{-2})\} = \min\{-3,1-(-2)\} = -3,
\end{eqnarray*}
and for $i = 3,4,\ldots,k+2$,
\begin{equation*}
m(c_{i}) = \min\{\iota_{m',4}(y_{2(i-2)}),1-\iota_{m',4}(y_{2(i-2)-1})\} = \min\{1-2i,1-(2-2i)\} = 1-2i,   
\end{equation*}
and for $j = 1,2,\ldots,k$,
\begin{equation*}
m(c'_{j}) = \min\{\iota_{m',4}(y'_{2j-1}),1-\iota_{m',4}(y'_{2j})\} = \min\{2j-1,1-2j\} = 1-2j.   
\end{equation*}   
Therefore, by Corollary~\ref{cor:lr_relations},
\begin{eqnarray*}
\beta(C'_{k}) &=& 4m' + \frac{m'(m'-1)}{2} + \sum_{i=1}^{k+2} m(c_{i}) + \sum_{j=1}^{k} m(c'_{j})\\
&=& 4(2k+2) + \frac{(2k+2)(2k+2-1)}{2} - (k+2)^{2}-k^{2} = 7k+5.
\end{eqnarray*}
\end{example}

\begin{corollary}
\label{cor:b_seq}
Given a realizable $C \in \mathrm{Cat}(m,n)$ with no bottom returns, let
\begin{equation*}
S_{1} = \{(p,q) \in S(C) \mid \iota_{m,n}(p)+\iota_{m,n}(q) \geq 1\},\ S_{2} = S(C) \setminus S_{1}
\end{equation*}
\begin{equation*}
I_{1} = \{\iota_{m,n}(q) \mid (p,q) \in S_{1}\},\ I_{2} = \{1,2,\ldots,m\} \setminus I_{1},\ \text{and} \ I_{3} = \{\iota_{m,n}(p) \mid (p,q) \in S_{2}\}.
\end{equation*}
If $\mathbf{b} = (b_{1},b_{2},\ldots,b_{m})$ is the maximum sequence for $C$ and elements of $I_{2}, I_{3}$ are listed in an ascending and descending order, i.e., $I_{2} = \{i_{1} < i_{2} < \cdots < i_{s}\}$ and $I_{3} = \{l_{1} > l_{2} > \cdots > l_{s}\}$, where $s = |S_{2}|$. Then
\begin{equation*}
b_{j} = n\ \text{for}\ j \in I_{1} \quad \text{and} \quad b_{i_{j}} = n+(i_{j}-1)+l_{j} \ \text{for}\ j = 1,2,\ldots,s.
\end{equation*}
\end{corollary}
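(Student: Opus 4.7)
My plan is to exploit Lemmas~\ref{lem:condi_bk_equals_n} and~\ref{lem:lr_relations} to read off both formulas from the index correspondence. For each $j \in \{1, \ldots, m\}$ write $(p_j, q_j)$ for the arc of index $j$ relative to $\mathbf{b}$. By Lemma~\ref{lem:condi_bk_equals_n}, $b_j = n$ iff $(p_j, q_j) \in S_1$; Lemma~\ref{lem:lr_relations}(ii) then adds $\iota_{m,n}(q_j) = j$ whenever $b_j = n$. Hence $I_1 = \{\iota_{m,n}(q) : (p,q) \in S_1\}$ coincides with $\{j : b_j = n\}$, proving the first formula $b_j = n$ for $j \in I_1$, and simultaneously $I_2$ is identified with the set of indices of $S_2$-arcs. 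For each $j \in I_2$, Lemma~\ref{lem:lr_relations}(i) gives $b_j = n + (j-1) + \iota_{m,n}(p_j)$, and $\{\iota_{m,n}(p_j) : j \in I_2\} = I_3$; the second formula $b_{i_j} = n + (i_j - 1) + l_j$ is therefore equivalent to the claim that the bijection $I_2 \to I_3$ defined by $j \mapsto \iota_{m,n}(p_j)$ is strictly decreasing.

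I will prove this monotonicity by induction on $m$, using Lemma~\ref{lem:catalan_state_subseq_b}: $C_{(1)} \in \mathrm{Cat}(m-1, n)$ is realized by the maximal sequence $(b_2, \ldots, b_m)$. A short computation using Lemma~\ref{lem:lr_relations} identifies the arc $c_1$ of index $1$ in $C$ with the unique top return removed as $\mathcal{A}(\tau_1(C), 1)$, so arcs of $S(C_{(1)})$ correspond bijectively to those of $S(C) \setminus \{c_1\}$ with indices shifted down by one. The coordinate change $\iota_{m,n} \to \iota_{m-1,n}$ arising from $\tau_1$ followed by the deletion of $c_1$'s endpoints is a uniform shift on each of the three regions determined by the positions of these endpoints (in the case $b_1 = n$ the middle region is empty), so the inductive hypothesis applied to $C_{(1)}$ transfers monotonicity onto the indices in $I_2$ other than $1$.

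The main obstacle is the case $1 \in I_2$ (that is, $b_1 < n$), in which one must additionally establish $\iota_{m,n}(p_1) > \iota_{m,n}(p_j)$ for every other $j \in I_2$. My plan here is to use the rightmost-top-return characterization of $c_1$ inside $\tau_1(C)$: the left end of any other $S_2$-arc of $C$, viewed inside $\tau_1(C)$, occupies a strictly smaller position on the upper boundary of $\tau_1(C)$ than $p_1$; pulling this positional inequality back to $C$ through the coordinate shift yields the desired strict comparison, completing the induction.
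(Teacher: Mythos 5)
Your proposal is correct and follows essentially the same route as the paper's proof: both reduce the first formula to Lemma~\ref{lem:condi_bk_equals_n} and Lemma~\ref{lem:lr_relations}(ii), and reduce the second formula, via Lemma~\ref{lem:lr_relations}(i), to the observation that for arcs in $S_{2}$ a larger left-end coordinate corresponds to a strictly smaller index relative to $\mathbf{b}$. The only difference is that the paper asserts this order-reversal without proof, whereas you sketch an induction on $m$ via Lemma~\ref{lem:catalan_state_subseq_b} to establish it --- a reasonable elaboration, though your handling of the case $b_{1} < n$ (ruling out $S_{2}$-arcs lying entirely to the right of the index-$1$ arc) still rests on the maximality of $\mathbf{b}$ in much the same implicit way the paper does.
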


\begin{proof}
It follows from Lemma~\ref{lem:condi_bk_equals_n} and Lemma~\ref{lem:lr_relations}(ii) that $I_{1}$ consists of all indices $j \in \{1,2,\ldots,m\}$ for which $b_{j} = n$. Moreover, since for $(p,q),(p',q') \in S_{2}$ if $\iota_{m,n}(p) > \iota_{m,n}(p')$ then index of $(p,q)$ is strictly less than index of $(p',q')$ relative to $\mathbf{b}$, it follows from Lemma~\ref{lem:lr_relations}(i) that 
\begin{equation*}
l_{j} = b_{i_{j}}-n-(i_{j}-1)   
\end{equation*}
for $j = 1,2,\ldots,s$. Therefore, $b_{i_{j}}$ is given by the formula above.
\end{proof}

\begin{example}
\label{ex:lr_relations_b_sequence}
Using the same notations as in Example~\ref{ex:lr_relations}, we find the maximal sequence $\mathbf{b} = (b_{1},b_{2},\ldots,b_{m'})$ for Catalan state $C'_{k}$ in Figure~\ref{fig:lr_relations}. As one may check,
\begin{equation*}
S_{1} = \{(y'_{1},y'_{2}),\ldots,(y'_{2k-1},y'_{2k})\} \quad \text{and} \quad
S_{2} = \{(x_{3},x_{4}),(x_{1},x_{2}),(y_{2},y_{1}),\ldots,(y_{2k},y_{2k-1})\},
\end{equation*}
so
\begin{eqnarray*}
I_{1} &=& \{\iota_{m',4}(y'_{2j}) \mid 1\leq j \leq k\} = \{2j \mid 1\leq j \leq k\}, \\
I_{2} &=& \{1,2,\ldots,2k+2\} \setminus I_{1} = \{1,3,\ldots,2k-1,2k+1,2k+2\},
\end{eqnarray*}
and
\begin{equation*}
I_{3} = \{\iota_{m',4}(x_{3}),\iota_{m',4}(x_{1})\} \cup \{\iota_{m',4}(y_{2(j-2)}) \mid 3\leq j \leq k+2\} = \{-1,-3,\ldots,1-2(k+2)\}.
\end{equation*}
Therefore, $b_{i} = 4$ for all $i \in I_{1}$ by Corollary~\ref{cor:b_seq}, i.e., $b_{2j} = 4$ for $j = 1,2,\ldots,k$. Since
\begin{equation*}
i_{j} = \begin{cases}
2j-1 & \text{if}\ 1 \leq j \leq k+1, \\
2k+2 & \text{if}\ j = k+2, \\
\end{cases}
\quad \text{and} \quad l_{j} = 1-2j\ \text{for}\ 1 \leq j \leq k+2,  
\end{equation*}
it follows from Corollary~\ref{cor:b_seq} that for $1 \leq j \leq k+1$,
\begin{equation*}
b_{2j-1} = b_{i_{j}} = 4 + ((2j-1) -1) + (1-2j) = 3,
\end{equation*}
and 
\begin{equation*}
b_{2k+2} = b_{i_{k+2}} = 4 + ((2k+2) -1) + (1-2(k+2)) = 2.
\end{equation*}
\end{example}

To prove Lemma~\ref{lem:removable_arc_beta}, we need to introduce additional notations. Let $\mathbf{b} = (b_{1},\ldots,b_{m})$ be the maximal sequence for a realizable Catalan state $C \in \mathrm{Cat}(m,n)$ with no bottom returns. For $1 \leq k \leq m$ and $l,r \in \mathbb{Z}$, define sets
\begin{equation*}
U_{n}(\mathbf{b},l,r,k) = \{1 \leq j \leq k-1 \mid l < l_{j,n}(\mathbf{b}) < r_{j,n}(\mathbf{b}) < r\}
\end{equation*}
and
\begin{equation*}
L_{n}(\mathbf{b},l,r,k) = \{1,2,\ldots,k-1\} \setminus U_{n}(\mathbf{b},l,r,k).
\end{equation*}
We split the above sets further into four sets according to the property $b_{j} < n$ or $b_{j} = n$, i.e.,
\begin{eqnarray*}
U_{n,1}(\mathbf{b},l,r,k) &=& \{j \in U_{n}(\mathbf{b},l,r,k) \mid b_{j} < n\}, \\
U_{n,2}(\mathbf{b},l,r,k) &=& \{j \in U_{n}(\mathbf{b},l,r,k) \mid b_{j} = n\}, \\
L_{n,1}(\mathbf{b},l,r,k) &=& \{j \in L_{n}(\mathbf{b},l,r,k) \mid b_{j} < n\}, \\
L_{n,2}(\mathbf{b},l,r,k) &=& \{j \in L_{n}(\mathbf{b},l,r,k) \mid b_{j} = n\}.
\end{eqnarray*}

\begin{lemma}
\label{lem:lr_relations_with_c1}
Assume that a Catalan state $C$ of $L(m,n)$ with no bottom returns has a removable arc $c = (y_{a},y'_{b})$, where $a,b \leq m$. Let $k$ be the index of $c$ relative to the maximal sequence $\mathbf{b}$ for $C$ and let $l = l_{k,n}(\mathbf{b})$ and $r = r_{k,n}(\mathbf{b})$. Then
\begin{enumerate}
\item[i)] $L_{n,1}(\mathbf{b},l,r,k) = \emptyset$,
\item[ii)] $b_{k} = k-a$ and $|L_{n,2}(\mathbf{b},l,r,k)| = k - \frac{n+a+b}{2}$.
\end{enumerate}
\end{lemma}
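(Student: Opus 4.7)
The plan is to combine the arithmetic identities of Lemmas~\ref{lem:condi_bk_equals_n} and~\ref{lem:lr_relations} with the geometric structure enforced by $c$ being a removable arc. Throughout, write $l = 1-n-a$ and $r = b$ for the coordinates of the endpoints of $c$, and let $A_1, A_2$ be the two regions of $\mathrm{R}^{2}_{m,n,n}$ split by $c$. A key auxiliary fact is the bound $b \leq n+a$, valid for removable arcs in realizable Catalan states; I expect this to follow by combining the removable-arc condition with the horizontal-line realizability constraint of Theorem~\ref{thm:vh_line_condi} (a parity/counting argument applied to $l^h_a$, where the $A_1$-boundary points above and below $l^h_a$ number $n+2a-1$ and $b-1-a$ respectively, forcing mutually incompatible parities when $b$ exceeds $n+a$ by too much).

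For the identity $b_k = k-a$ in part~(ii): if $b_k < n$, Lemma~\ref{lem:lr_relations}(i) gives $l_{k,n}(\mathbf{b}) = b_k - n - (k-1)$, and equating with $l = 1-n-a$ yields $b_k = k-a$. If $b_k = n$, Lemma~\ref{lem:lr_relations}(ii) gives $b = r_{k,n}(\mathbf{b}) = k$; then the auxiliary bound $b \leq n+a$, combined with $b \geq n+a$ (from Lemma~\ref{lem:condi_bk_equals_n}), forces $b = n+a$, hence $b_k = n = k-a$.

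For part~(i), let $c_j = (p_j, q_j)$ be an arc of index $j < k$ with $b_j < n$. By Lemma~\ref{lem:condi_bk_equals_n}, $c_j \in S_2$. Since $c$ is removable, $c_j$ lies entirely in $A_1$ or entirely in $A_2$. If $c_j \subset A_1$, its endpoints have coordinates strictly between $l$ and $r$, so $j \in U_n(\mathbf{b}, l, r, k)$. If instead $c_j \subset A_2$: the endpoints of $c_j$ (having no bottom ends, as $c_j \in S(C)$) lie in $\{y_{a+1}, \ldots, y_m\} \cup \{y'_{b+1}, \ldots, y'_m\}$, and the $S_2$ condition excludes the left endpoint lying in $\{y'_{b+1}, \ldots, y'_m\}$, so $p_j = y_i$ for some $i \geq a+1$ and $\iota_{m,n}(p_j) \leq -n-a < l$. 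Applying Corollary~\ref{cor:b_seq} (which ranks $S_2$ arcs by their left coordinates in descending order), together with the bound $b \leq n+a$, a counting comparison between the number of $S_2$ arcs whose left coordinates lie in $(l, r)$ and $|I_2 \cap \{1, \ldots, b\}|$ shows that the index of $c_j$ strictly exceeds $k$, contradicting $j < k$. Hence $c_j \subset A_1$ and $j \in U_n$.

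For the cardinality formula in part~(ii): by part~(i), $L_n(\mathbf{b}, l, r, k) = L_{n,2}(\mathbf{b}, l, r, k)$, so $|L_{n,2}| = (k-1) - |U_n|$. A dual argument shows that every arc of $C$ contained in $A_1 \setminus \{c\}$ has index strictly less than $k$, so $|U_n|$ equals the number of arcs of $C$ in $A_1 \setminus \{c\}$. Since these arcs pair up the $n+a+b-2$ interior boundary points of $A_1$, we get $|U_n| = (n+a+b)/2 - 1$ and hence $|L_{n,2}| = k - (n+a+b)/2$. The principal obstacle is establishing the auxiliary bound $b \leq n+a$, which underpins both the $b_k = n$ case of part~(ii) and the index comparison in part~(i); the remaining steps are then routine manipulations using the arithmetic of Lemmas~\ref{lem:condi_bk_equals_n}--\ref{lem:lr_relations} and Corollary~\ref{cor:b_seq}.
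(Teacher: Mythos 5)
Your overall strategy -- extracting everything from the explicit index assignment of Corollary~\ref{cor:b_seq} and Lemmas~\ref{lem:condi_bk_equals_n}--\ref{lem:lr_relations} together with a direct count of arcs in $A_1$ and $A_2$ -- is a genuinely different route from the paper, which instead derives two independent identities $b_k = k-a-2|L_{n,1}|$ and $b_k = b+n-k+2|L_{n,2}|$ from the shift construction $\tau_{k-1}$ and Lemma~\ref{lem:catalan_state_subseq_b}, and then proves $L_{n,1}=\emptyset$ separately. Your derivation of $b_k=k-a$ in the case $b_k<n$ is correct and cleaner than the paper's. However, there are two genuine gaps. First, the auxiliary bound $b\leq n+a$, which you correctly identify as the linchpin (it is needed for the $b_k=n$ case of (ii), for the counting comparison in (i), and for the final cardinality formula), is not established: the parity argument at $l^h_a$ does not work as stated (the line $l^h_a$ need not exist when $a\leq 0$, and comparing parities of point counts yields no inequality). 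The correct argument -- which is what the paper does -- is a crossing-count lower bound $\#(C\cap l^h_{i})\geq (i-a)+(b-1-i)+1=b-a$ on a horizontal line $l^h_i$ chosen between the returns of $A_1$ and those of $A_2$ (such a line exists precisely because $c$ is removable), after which Theorem~\ref{thm:vh_line_condi} gives $b-a\leq n$.

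Second, in part (i) your claim that ``the $S_2$ condition excludes the left endpoint lying in $\{y'_{b+1},\ldots,y'_m\}$'' is false when $y'_b$ is a top-boundary point with $b\leq -2$: an arc with both ends of coordinate $>r=b$ has coordinate sum $\geq 2b+3$, which is $\leq 0$ when $b\leq -2$, so a top return of $C$ in $A_2$ lying to the right of $y'_b$ (or an arc joining top- and right-boundary points there) can perfectly well belong to $S_2$. Ruling out this configuration is not arithmetic; it is exactly where the paper invokes Remark~\ref{rem:removable_arc} (every top return of $C$ in $A_2$ is parallel to $c$, and such an arc would force a non-parallel top return or an innermost corner in $A_2$). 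Without this geometric input your case analysis for arcs of $S_2\cap A_2$ is incomplete, and the subsequent monotonicity/counting comparison -- which is itself only sketched and requires tracking $|I_1\cap\{1,\ldots,b\}|$ carefully in the two cases $b_k<n$ and $b_k=n$ -- rests on it. Both gaps are repairable, and the repaired argument would still be a legitimate alternative to the paper's proof, but as written the proposal does not close either of them.
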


\begin{proof}
We start by showing that
\begin{equation}
\label{eqn:pf_lem_lr_relations_with_c1_L1}
L_{n,1}(\mathbf{b},l,r,k) = \{j \in L_{n}(\mathbf{b},l,r,k) \mid l_{j,n}(\mathbf{b}) < r_{j,n}(\mathbf{b}) < l\}
\end{equation}
and
\begin{equation}
\label{eqn:pf_lem_lr_relations_with_c1_L2}
L_{n,2}(\mathbf{b},l,r,k) = \{j \in L_{n}(\mathbf{b},l,r,k) \mid r_{j,n}(\mathbf{b}) > l_{j,n}(\mathbf{b}) > r\}.
\end{equation}
For any $j \in L_{n}(\mathbf{b},l,r,k)$, either
\begin{equation*}
l_{j,n}(\mathbf{b}) \leq l \quad \text{or} \quad r_{j,n}(\mathbf{b}) \geq r.  
\end{equation*}
Since $j < k$, the arc with index $j$ relative to $\mathbf{b}$ is realized before $c$ and consequently
\begin{equation*}
l_{j,n}(\mathbf{b}) \leq l < r \leq r_{j,n}(\mathbf{b})   
\end{equation*}
is impossible. Therefore, as arcs of $C$ do not intersect, it must be either
\begin{equation}
\label{eqn:pf_lem_lr_relations_with_c1_1l}
l_{j,n}(\mathbf{b}) < r_{j,n}(\mathbf{b}) < l   
\end{equation}
or 
\begin{equation}
\label{eqn:pf_lem_lr_relations_with_c1_1r}
r_{j,n}(\mathbf{b}) > l_{j,n}(\mathbf{b}) > r,    
\end{equation}
i.e., each arc of $C$ with index $j \in L_{n}(\mathbf{b},l,r,k)$ relative to $\mathbf{b}$ must have its both ends either to the left of $y_{a}$ or to the right of $y'_{b}$, respectively.

We show that there is no $j \in L_{n,1}(\mathbf{b},l,r,k)$ which satisfies \eqref{eqn:pf_lem_lr_relations_with_c1_1r}. Suppose that there is such $j$ and let $c_{j} = (p_{j},q_{j})$ be the arc of $C$ with index $j$ relative to $\mathbf{b}$. Then $b_{j} < n$ and by Lemma~\ref{lem:condi_bk_equals_n},
\begin{equation}
\label{eqn:pf_lem_lr_relations_with_c1_L1new_ineq}
l_{j,n}(\mathbf{b})+r_{j,n}(\mathbf{b}) < 1.
\end{equation}
Using \eqref{eqn:pf_lem_lr_relations_with_c1_1r} and \eqref{eqn:pf_lem_lr_relations_with_c1_L1new_ineq}, we see that $r < 0$ and consequently $y'_{b}$ is a top-boundary point. It follows from \eqref{eqn:pf_lem_lr_relations_with_c1_L1new_ineq} that $c_{j}$ is not a right return, so either $c_{j}$ is an arc joining top- and right-boundary points or it is a top return. In the former case, since
\begin{equation*}
r_{j,n}(\mathbf{b}) < 1-l_{j,n}(\mathbf{b})   
\end{equation*}
(i.e., the number of top-boundary points to the right of $p_{j}$ is greater than the number of right-boundary points to the left of $q_{j}$), $C$ has a top return with both ends to the right of $p_{j}$. Thus, in both cases, $C$ has a top return not parallel to $c$ in the region $A_{2}$ (see Definition~\ref{def:removable_arc}), which is impossible by Remark~\ref{rem:removable_arc}. Hence, we showed that for every $j \in L_{n,1}(\mathbf{b},l,r,k)$ only \eqref{eqn:pf_lem_lr_relations_with_c1_1l} can hold. Consequently,
\begin{equation}
\label{eqn:pf_lem_lr_relations_with_c1_L1new}
L_{n,1}(\mathbf{b},l,r,k) \subseteq \{j \in L_{n}(\mathbf{b},l,r,k) \mid l_{j,n}(\mathbf{b}) < r_{j,n}(\mathbf{b}) < l\}. 
\end{equation}

We show that there is no $j \in L_{n,2}(\mathbf{b},l,r,k)$ that satisfies \eqref{eqn:pf_lem_lr_relations_with_c1_1l}. Suppose that there is such $j$. Since $b_{j} = n$,
\begin{equation}
\label{eqn:pf_lem_lr_relations_with_c1_L2new_ineq}
l_{j,n}(\mathbf{b})+r_{j,n}(\mathbf{b}) \geq 1
\end{equation}
by Lemma~\ref{lem:condi_bk_equals_n}.
However, $l \leq 0$ as $c$ is a proper arc, so by \eqref{eqn:pf_lem_lr_relations_with_c1_1l}
\begin{equation*}
l_{j,n}(\mathbf{b})+r_{j,n}(\mathbf{b}) < 2l \leq 0,
\end{equation*}
which contradicts to \eqref{eqn:pf_lem_lr_relations_with_c1_L2new_ineq}. Hence, we showed that, for every $j \in L_{n,2}(\mathbf{b},l,r,k)$ only \eqref{eqn:pf_lem_lr_relations_with_c1_1r} can hold. Consequently,
\begin{equation}
\label{eqn:pf_lem_lr_relations_with_c1_L2new}
L_{n,2}(\mathbf{b},l,r,k) \subseteq \{j \in L_{n}(\mathbf{b},l,r,k) \mid r_{j,n}(\mathbf{b}) > l_{j,n}(\mathbf{b}) > r\}. 
\end{equation}

Clearly,
\begin{equation*}
L_{n}(\mathbf{b},l,r,k) = L_{n,1}(\mathbf{b},l,r,k) \cup L_{n,2}(\mathbf{b},l,r,k)
\end{equation*}
and
\begin{equation*}
L_{n}(\mathbf{b},l,r,k) = \{j \in L_{n}(\mathbf{b},l,r,k) \mid l_{j,n}(\mathbf{b}) < r_{j,n}(\mathbf{b}) < l\} \cup \{j \in L_{n}(\mathbf{b},l,r,k) \mid r_{j,n}(\mathbf{b}) > l_{j,n}(\mathbf{b}) > l\},
\end{equation*}
so by \eqref{eqn:pf_lem_lr_relations_with_c1_L1new} and \eqref{eqn:pf_lem_lr_relations_with_c1_L2new} we conclude that \eqref{eqn:pf_lem_lr_relations_with_c1_L1} and \eqref{eqn:pf_lem_lr_relations_with_c1_L2} hold.

Now we find relations that express $b_{k}$ in terms of $|L_{n,1}(\mathbf{b},l,r,k)|$ and $|L_{n,2}(\mathbf{b},l,r,k)|$, respectively. Let $\tilde{c}$ be the arc of $\tau_{k-1}(C)$ that corresponds to $c$, and let $\tilde{p}$ be its left end. Then the number of top-boundary points to the right of $\tilde{p}$ in $\tau_{k-1}(C)$ and the number of top-boundary points to the right of $y_{a}$ in $C$ differ by $(k-1)$. By the definition, Catalan state $C_{(k-1)}$ is obtained from $\tau_{k-1}(C)$ by removing the set of arcs $\mathcal{A}(\tau_{k-1}(C),k-1)$ and this set obviously includes
\begin{equation*}
|U_{n}(\mathbf{b},l,r,k)|+|L_{n,2}(\mathbf{b},l,r,k)|    
\end{equation*}
arcs with their left ends to the right of $\tilde{p}$. Since by Lemma~\ref{lem:catalan_state_subseq_b} the Catalan state $C_{(k-1)}$ is realized by $\mathbf{b}_{(k)} = (b_{k+1},b_{k+2},\ldots,b_{m})$, it follows that
\begin{equation}
\label{eqn:pf_lem_lr_relations_with_c1_l1n}
l_{1,n}(\mathbf{b}_{(k)}) = l_{k,n}(\mathbf{b})-(k-1)+2|U_{n}(\mathbf{b},l,r,k)|+2|L_{n,2}(\mathbf{b},l,r,k)|.
\end{equation}

Analogously, one can show that
\begin{equation}
\label{eqn:pf_lem_lr_relations_with_c1_r1n}
r_{1,n}(\mathbf{b}_{(k)}) = r_{k,n}(\mathbf{b})-(k-1)+2|L_{n,2}(\mathbf{b},l,r,k)|.
\end{equation}

Furthermore,
\begin{equation}
\label{eqn:pf_lem_lr_relations_with_c1_l}
l = \iota_{m,n}(y_{a}) =
\begin{cases}
\iota_{m,n}(y_{a}) &  \text{if} \ a \geq 1 \\
\iota_{m,n}(y'_{1-n-a}) &  \text{if} \ a < 1
\end{cases}
= -n+1-a
\end{equation}
and 
\begin{equation}
\label{eqn:pf_lem_lr_relations_with_c1_r}
r = \iota_{m,n}(y'_{b}) = b.
\end{equation}
Notice that $\tilde{c}$ has index $1$ relative to $\mathbf{b}_{(k)}$, so its ends are consecutive boundary points. Therefore,
\begin{equation}
\label{eqn:pf_lem_lr_relations_with_c1_l1nr1n}
r_{1,n}(\mathbf{b}_{(k)}) = l_{1,n}(\mathbf{b}_{(k)})+1 = b_{k}-n+1.
\end{equation}
By the definition of $U_{n}(\mathbf{b},l,r,k)$, $L_{n,1}(\mathbf{b},l,r,k)$, and $L_{n,2}(\mathbf{b},l,r,k)$, we see that
\begin{equation}
\label{eqn:pf_lem_lr_relations_with_c1_UL1L2}
|U_{n}(\mathbf{b},l,r,k)|+|L_{n,1}(\mathbf{b},l,r,k)|+|L_{n,2}(\mathbf{b},l,r,k)| = k-1,
\end{equation}
so after using \eqref{eqn:pf_lem_lr_relations_with_c1_l}, \eqref{eqn:pf_lem_lr_relations_with_c1_l1nr1n}, and \eqref{eqn:pf_lem_lr_relations_with_c1_UL1L2}, the equation \eqref{eqn:pf_lem_lr_relations_with_c1_l1n} becomes
\begin{equation*}
-n+b_{k} = (-n+1-a)-(k-1)+2(k-1-|L_{n,1}(\mathbf{b},l,r,k)|)
\end{equation*}
or equivalently 
\begin{equation}
\label{eqn:pf_lem_lr_relations_with_c1_bk_v1}
b_{k} = k-a-2|L_{n,1}(\mathbf{b},l,r,k)|.
\end{equation}
Analogously, after using \eqref{eqn:pf_lem_lr_relations_with_c1_r} and \eqref{eqn:pf_lem_lr_relations_with_c1_l1nr1n}, the equation \eqref{eqn:pf_lem_lr_relations_with_c1_r1n} becomes 
\begin{equation*}
-n+b_{k}+1 = b-(k-1)+2|L_{n,2}(\mathbf{b},l,r,k)|)
\end{equation*}
or equivalently 
\begin{equation}
\label{eqn:pf_lem_lr_relations_with_c1_bk_v2}
b_{k} = b+n-k+2|L_{n,2}(\mathbf{b},l,r,k)|.    
\end{equation}

\medskip

For i), if $b_{k} < n$ then
\begin{equation*}
b_{k} = l_{k,n}(\mathbf{b})+n+(k-1)
\end{equation*}
by Lemma~\ref{lem:lr_relations}(i). Combining this with \eqref{eqn:pf_lem_lr_relations_with_c1_l} and \eqref{eqn:pf_lem_lr_relations_with_c1_bk_v1}, we see that
\begin{equation*}
(-n+1-a)+n+(k-1) = b_{k} = k-a-2|L_{n,1}(\mathbf{b},l,r,k)|
\end{equation*}
and consequently, $|L_{n,1}(\mathbf{b},l,r,k)| = 0$. If $b_{k} = n$ then by \eqref{eqn:pf_lem_lr_relations_with_c1_L1}, it suffices to show that $S = \emptyset$, where
\begin{equation*}
S = \{j \in L_{n}(\mathbf{b},l,r,k) \mid l_{j,n}(\mathbf{b}) < r_{j,n}(\mathbf{b}) < l\}.
\end{equation*}
Suppose that $S \neq \emptyset$ and let $j = \min S$. Since $b_{k} = n$, by Lemma~\ref{lem:condi_bk_equals_n}
\begin{equation*}
l_{k,n}(\mathbf{b})+r_{k,n}(\mathbf{b}) = (-n+1-a)+b \geq 1
\end{equation*}
or equivalently $b-a \geq n$. Let $j_{1}$ and $j_{2}$ be the maximal and the minimal numbers among all $i$ for which $(y_{i+1},y_{i})$ or $(y'_{i},y'_{i+1})$ is an arc of $C$ in $A_{1}$ and $A_{2}$ (see Definition~\ref{def:removable_arc} for $A_{1}$ and $A_{2}$), respectively. As one may check (see Figure~\ref{fig:pf_lem_lr_relations_with_c}(a)),
\begin{equation*}
\#(C \cap l^{h}_{j_{2}}) = (j_{2}-a)+(b-1-j_{2})+1 = b-a \geq n.
\end{equation*}
Since $C$ is realizable, it must be $b-a = n$ or equivalently 
\begin{equation*}
1-l_{k,n}(\mathbf{b}) = r_{k,n}(\mathbf{b}).
\end{equation*}
Let $c' = (p',q')$ be the arc of $C_{(j-1)}$ corresponding to $c$ in $C$ and let $c'' = (p'',q'')$ be the arc of $C_{(j-1)}$ with index $1$ relative to $\mathbf{b}_{(j)}$. Since $\mathbf{b}_{(j)}$ is the maximal sequence for $C_{(j-1)}$ by Lemma~\ref{lem:catalan_state_subseq_b}, no arc $(y'_{i},y'_{i+1})$ of $C_{(j-1)}$ with $i \leq 0$ has its ends to the right of $q''$. Therefore, neither $c'$ is a top return nor $c' = (y'_{0},y'_{1})$. Consequently, $c'$ joins top- and right-boundary points and 
\begin{equation*}
n_{l}+n_{r} \geq 1,   
\end{equation*}
where $n_{l}$ and $n_{r}$ are the number of top- and right-boundary points of $C_{(j-1)}$ between $p'$ and $q'$ in the region $A'$ shown in Figure~\ref{fig:pf_lem_lr_relations_with_c}(b). As one may check,
\begin{equation*}
n_{l} = 1-l_{k,n}(\mathbf{b})-(j-1) = r_{k,n}(\mathbf{b})-(j-1) = n_{r}.
\end{equation*}
However, this implies that there is an arc $(y'_{i},y'_{i+1})$ with $i \leq 0$ in $A'$ (which clearly has its ends to the right of $q''$), a contradiction.

\begin{figure}[htb]
\centering
\includegraphics[scale=1]{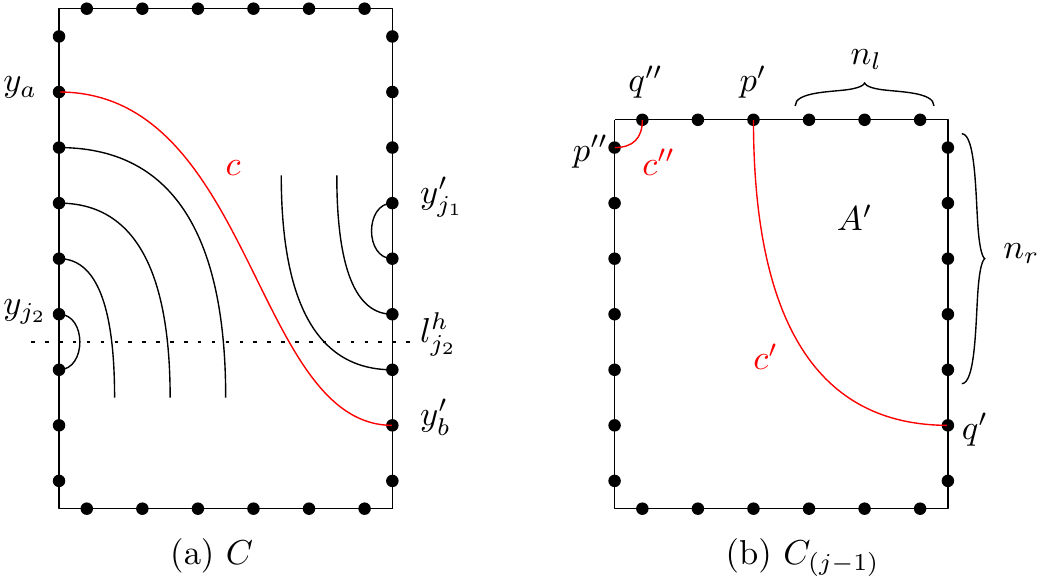}
\caption{Catalan states $C$ and $C_{(j-1)}$}
\label{fig:pf_lem_lr_relations_with_c}
\end{figure}

For ii), since $L_{n,1}(\mathbf{b},l,r,k) = \emptyset$ by i), after using \eqref{eqn:pf_lem_lr_relations_with_c1_bk_v1} and \eqref{eqn:pf_lem_lr_relations_with_c1_bk_v2} we see that,
\begin{equation*}
k-a = b_{k} = b+n-k+2|L_{n,2}(\mathbf{b},l,r,k)|.
\end{equation*}
Consequently, the formula for $b_{k}$ and $|L_{n,2}(\mathbf{b},l,r,k)|$ in ii) follows.
\end{proof}

\begin{lemma}
\label{lem:lr_relations_with_c2}
Assume that a Catalan state $C$ of $L(m,n)$ with no bottom returns has a removable arc $c = (y_{a},y'_{b})$, where $a,b \leq m$. Let $\mathbf{b}$ and $\mathbf{b}'$ be the maximal sequences for $C$ and $C \smallsetminus c$ and let $k$ be the index of $c$ relative to $\mathbf{b}$. Denote by $l = l_{k,n}(\mathbf{b})$ and $r = r_{k,n}(\mathbf{b})$. Then
\begin{enumerate}
\item[i)] $L_{n,1}(\mathbf{b}',l,r,k) = \emptyset$,
\item[ii)] $\{ l_{j,n}(\mathbf{b}') \mid j \in U_{n,1}(\mathbf{b}',l,r,k) \} = \{ l_{j,n}(\mathbf{b}) \mid j \in U_{n,1}(\mathbf{b},l,r,k) \}$,
\item[iii)] $\{ r_{j,n}(\mathbf{b}') \mid j \in U_{n,2}(\mathbf{b}',l,r,k) \} = \{ r_{j,n}(\mathbf{b}) \mid j \in U_{n,2}(\mathbf{b},l,r,k) \}$, and
\item[iv)] $\{ r_{j,n}(\mathbf{b}') \mid j \in L_{n,2}(\mathbf{b}',l,r,k) \} = \{ r_{j,n}(\mathbf{b})-1 \mid j \in L_{n,2}(\mathbf{b},l,r,k) \}$.
\end{enumerate}
\end{lemma}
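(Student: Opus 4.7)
The plan is to apply Corollary~\ref{cor:b_seq} to both $C$ and $C' = C \smallsetminus c$ and to compare the resulting maximal sequences through a natural bijection between their arcs. First, I would use the $\tau$-operations in the definition of $C \smallsetminus c$ to set up a bijection $\phi \colon S(C) \setminus \{c\} \to S(C')$. Since the removable arc $c$ separates the rectangle into $A_{1}$ and $A_{2}$ and is the only arc of $C$ that could connect them, every arc of $C$ other than $c$ lies entirely in $A_{1}$ or entirely in $A_{2}$. For an arc contained in $A_{1}$, $\phi$ preserves all endpoint labels and hence both $\iota$-coordinates. For an arc contained in $A_{2}$, each left endpoint $y_{i}$ with $i > a$ becomes $y_{i-1}$ in $C'$ and each right endpoint $y'_{i}$ with $i > b$ becomes $y'_{i-1}$, so the $\iota$-coordinate of such a left endpoint increases by $1$ and that of such a right endpoint decreases by $1$.

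By Lemma~\ref{lem:lr_relations_with_c1}(i) together with the description of $L_{n}(\mathbf{b},l,r,k)$ obtained in its proof, every arc of $C$ with index $j \leq k-1$ relative to $\mathbf{b}$ falls into exactly one of two classes: either it lies in $U_{n}(\mathbf{b},l,r,k)$ (equivalently, the arc is contained in $A_{1}$ with both coordinates strictly between $l$ and $r$), or it lies in $L_{n,2}(\mathbf{b},l,r,k)$ and, by Lemma~\ref{lem:lr_relations}(ii), is a right return $(y'_{l'}, y'_{j'})$ in $A_{2}$ with $b < l' < j' \leq k-1$. Applying $\phi$ to the first class preserves both coordinates, while applying it to the second class produces an arc $(y'_{l'-1}, y'_{j'-1})$ in $C'$ whose right $\iota$-coordinate equals $r_{j',n}(\mathbf{b}) - 1$. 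Since by Lemma~\ref{lem:condi_bk_equals_n} the condition $b_{j} = n$ is equivalent to $\iota(p) + \iota(q) \geq 1$, and this sum is invariant for $A_{1}$-arcs and remains at least $1$ for images of $L_{n,2}$-arcs, the splitting $U_{n,1} \cup U_{n,2}$ and the class $L_{n,2}$ are each preserved by $\phi$. Combined with a parallel application of Corollary~\ref{cor:b_seq} to $C'$, these observations yield (ii)--(iv); claim (i) then follows because, via the bijection, every arc of $C'$ of index $\leq k-1$ comes from an arc of $C$ that is in $U_{n}$ or $L_{n,2}$, leaving $L_{n,1}(\mathbf{b}',l,r,k)$ empty.

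The main obstacle is verifying that $\phi$ restricts to a bijection between the arcs of $C$ with index at most $k-1$ relative to $\mathbf{b}$ and the arcs of $C'$ with index at most $k-1$ relative to $\mathbf{b}'$; a priori the index of an arc could jump past $k$ when $c$ is removed. To establish this index compatibility I would apply Corollary~\ref{cor:b_seq} explicitly to both $C$ and $C'$, compute the sets $I_{1}, I_{2}, I_{3}$ for each using the coordinate shifts from the first paragraph, and check that the ascending enumeration of $I_{2}$ and the descending enumeration of $I_{3}$ match, through the first $k-1$ positions, with the enumerations for $\mathbf{b}$. Given the explicit characterization of the arcs of index $\leq k-1$ obtained above, this reduces to a finite bookkeeping computation, after which (i)--(iv) follow from the coordinate rules already described.
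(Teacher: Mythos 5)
Your overall strategy---an endpoint-tracking bijection between the arcs of $C$ of index at most $k-1$ relative to $\mathbf{b}$ and those of $C\smallsetminus c$ of index at most $k-1$ relative to $\mathbf{b}'$, with $\iota$-coordinates preserved for arcs in $A_{1}$ and shifted for arcs in $A_{2}$---is the same as the paper's, and (ii)--(iii) would follow as you describe once the bijection is shown to respect indices and the classification into $U_{n,1}$, $U_{n,2}$, $L_{n,2}$. But there is a genuine gap at the assertion that the sum $\iota(p)+\iota(q)$ ``remains at least $1$ for images of $L_{n,2}$-arcs.'' By your own coordinate rule, both coordinates of an $L_{n,2}$-arc (both ends to the right of $y'_{b}$) decrease by $1$, so the sum drops by $2$. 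If the original sum is $1$ or $2$---which can occur precisely when $r = \iota_{m,n}(y'_{b}) < 0$, i.e., when the right end of $c$ lies on the top boundary---the image has sum $\leq 0$, hence $b'_{j'} < n$ by Lemma~\ref{lem:condi_bk_equals_n}, and that index lands in $L_{n,1}(\mathbf{b}',l,r,k)$, destroying both (i) and (iv). Excluding this case is exactly the nontrivial content of part (i), and it cannot be done by coordinate bookkeeping alone: the paper argues by contradiction that such a $j'$ would force $C$ to contain, in the region $A_{2}$, either an innermost top corner or a top return not parallel to $c$, which is impossible by Remark~\ref{rem:removable_arc} because $c$ is removable. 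Your proposal never uses the removability of $c$ beyond the separation into $A_{1}$ and $A_{2}$, so this case is not ruled out.

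A second, smaller issue is that the index-compatibility of the bijection---which you correctly flag as the main obstacle---is deferred to ``finite bookkeeping'' via Corollary~\ref{cor:b_seq} and never carried out. The paper obtains it instead from Lemma~\ref{lem:catalan_state_subseq_b} together with the identity $\mathcal{A}(\tau_{k}(C),k) = \mathcal{A}(\tau_{k-1}(C\smallsetminus c),k-1) \cup \{c\}$ and Lemma~\ref{lem:lr_relations_with_c1}(i). Note also that your bookkeeping route would itself need part (i), or the removability argument above, in order to match the enumerations of $I_{2}$ and $I_{3}$ for $\mathbf{b}$ and $\mathbf{b}'$ through the first $k-1$ positions, so the two gaps are not independent; as written the argument is circular at that point.
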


\begin{proof}
Let $a_{j}$ be the arc of $C$ with index $j$ relative to $\mathbf{b}$, $1 \leq j \leq k$, and let $a'_{j}$ be the arc of $C \smallsetminus c$ with index $j$ relative to $\mathbf{b}'$, where $1 \leq j \leq k-1$. By Lemma~\ref{lem:catalan_state_subseq_b}, arcs $\mathcal{A}(\tau_{k}(C),k)$ correspond to arcs $\mathcal{X} = \{a_{1},a_{2},\ldots,a_{k}\}$ and arcs $\mathcal{A}(\tau_{k-1}(C \smallsetminus c),k-1)$ correspond to arcs $\mathcal{X}' = \{a'_{1},a'_{2},\ldots,a'_{k-1}\}$. Clearly,
\begin{equation*}
\mathcal{A}(\tau_{k}(C),k) = \mathcal{A}(\tau_{k-1}(C \smallsetminus c),k-1) \cup \{c\}  
\end{equation*}
and, by Lemma~\ref{lem:lr_relations_with_c1}(i), left ends of arcs of $\mathcal{A}(\tau_{k-1}(C \smallsetminus c),k-1)$ are to the right of the left end of $c$. There is a bijection $\sigma$ between $\mathcal{X} \setminus \{c\}$ and $\mathcal{X}'$ that maps an arc $a_{j} = (p_{j},q_{j})$ to the arc $a'_{j'} = (p'_{j'},q'_{j'})$ such that 
\begin{equation*}
\iota_{m-1,n}(p'_{j'}) = \iota_{m,n}(p_{j})\ \text{and}\ \iota_{m-1,n}(q'_{j'}) = \iota_{m,n}(q_{j}) \quad \text{if}\ j \in U_{n}(\mathbf{b},l,r,k)
\end{equation*}
and
\begin{equation*}
\iota_{m-1,n}(p'_{j'}) = \iota_{m,n}(p_{j})-1\ \text{and}\ \iota_{m-1,n}(q'_{j'}) = \iota_{m,n}(q_{j})-1 \quad \text{if}\ j \in L_{n}(\mathbf{b},l,r,k).
\end{equation*}
As one may check,
\begin{equation}
\label{eqn:pf_lr_relations_with_c_0U}
\{\sigma(a_{j}) \mid j \in U_{n}(\mathbf{b},l,r,k)\} = \{a'_{j'} \mid j' \in U_{n}(\mathbf{b}',l,r,k)\}
\end{equation}
and
\begin{equation}
\label{eqn:pf_lr_relations_with_c_0L}
\{\sigma(a_{j}) \mid j \in L_{n}(\mathbf{b},l,r,k)\} = \{a'_{j'} \mid j' \in L_{n}(\mathbf{b}',l,r,k)\}.
\end{equation}

\medskip

For i), suppose that $j' \in L_{n,1}(\mathbf{b}',l,r,k)$ and let $a_{j} = \sigma^{-1}(a'_{j'})$. Then 
\begin{equation*}
j \in L_{n}(\mathbf{b},l,r,k) = L_{n,2}(\mathbf{b},l,r,k)    
\end{equation*}
by Lemma~\ref{lem:lr_relations_with_c1}(i) and consequently
\begin{equation*}
r_{j,n}(\mathbf{b}) > l_{j,n}(\mathbf{b}) > r    
\end{equation*}
by \eqref{eqn:pf_lem_lr_relations_with_c1_L2}. Hence,
\begin{equation}
\label{eqn:pf_lem_lr_relations_with_c_3}
r_{j',n}(\mathbf{b}') > l_{j',n}(\mathbf{b}') = \iota_{m-1,n}(p'_{j'}) = \iota_{m,n}(p_{j}) - 1 = l_{j,n}(\mathbf{b})-1 \geq r.    
\end{equation}
Since $j' \in L_{n,1}(\mathbf{b}',l,r,k)$, $b'_{j'} < n$ and thus, by Lemma~\ref{lem:condi_bk_equals_n}
\begin{equation}
\label{eqn:pf_lem_lr_relations_with_c_4}
l_{j',n}(\mathbf{b}')+r_{j',n}(\mathbf{b}') < 1.    
\end{equation}
Using \eqref{eqn:pf_lem_lr_relations_with_c_3} and \eqref{eqn:pf_lem_lr_relations_with_c_4}, we see that $r < 0$ and consequently $y'_{b}$ is a top-boundary point in $C$, this implies that the left end $p'_{j'}$ of $a'_{j'}$ cannot be a left-boundary point. Moreover, $a'_{j'}$ is not a right return by \eqref{eqn:pf_lem_lr_relations_with_c_4}, so either $a'_{j'}$ is an arc joining top- and right-boundary points or it is a top return. In the former case, since 
\begin{equation*}
r_{j',n}(\mathbf{b}') < 1-l_{j',n}(\mathbf{b}')    
\end{equation*}
(i.e., the number of top-boundary points to the right of $p'_{j'}$ is greater than the number of right-boundary points to the left of $q'_{j'}$), $C \smallsetminus c$ has a top return $c'$ with both ends to the right of $p'_{j'}$. Thus, in both cases $C \smallsetminus c$ has a top return and consequently it must have a top return with consecutive top-boundary points $a'_{t'} = (y'_{i'},y'_{i'+1})$ for some $t' \in L_{n}(\mathbf{b}',l,r,k)$, where $r \leq i' < 0$. Since $\sigma$ is a bijection, there is $t \in L_{n}(\mathbf{b},l,r,k)$ such that
\begin{equation*}
a_{t} = \sigma^{-1}(a'_{t'}) = (y'_{i'+1},y'_{i'+2}).
\end{equation*}
It follows that $a_{t}$ is in the region $A_{2}$ (see Definition~\ref{def:removable_arc}) and either $a_{t} = (x_{n},y'_{1})$ or $a_{t}$ is a top return of $C$ that is not parallel to $c$. However, this is impossible by Remark~\ref{rem:removable_arc}, so $L_{n,1}(\mathbf{b}',l,r,k) = \emptyset$.

\medskip

For ii), we first show that
\begin{equation}
\label{eqn:pf_lr_relations_with_c_1}
\{\sigma(a_{j}) \mid j \in U_{n,1}(\mathbf{b},l,r,k)\} = \{a'_{j'} \mid j' \in U_{n,1}(\mathbf{b}',l,r,k)\}.
\end{equation}
If $a'_{j'} = \sigma(a_{j})$ for some $j \in U_{n,1}(\mathbf{b},l,r,k)$, then by Lemma~\ref{lem:condi_bk_equals_n},
\begin{equation*}
l_{j,n}(\mathbf{b})+r_{j,n}(\mathbf{b}) < 1.    
\end{equation*}
Hence,
\begin{equation*}
l_{j',n}(\mathbf{b}')+r_{j',n}(\mathbf{b}') = \iota_{m-1,n}(p'_{j'})+\iota_{m-1,n}(q'_{j'}) = \iota_{m,n}(p_{j})+\iota_{m,n}(q_{j}) = l_{j,n}(\mathbf{b})+r_{j,n}(\mathbf{b}) < 1    
\end{equation*}
by the definition of $\sigma$ and consequently $j' \in U_{n,1}(\mathbf{b}',l,r,k)$ by Lemma~\ref{lem:condi_bk_equals_n}, i.e.,
\begin{equation*}
\{\sigma(a_{j}) \mid j \in U_{n,1}(\mathbf{b},l,r,k)\} \subseteq \{a'_{j'} \mid j' \in U_{n,1}(\mathbf{b}',l,r,k)\}.
\end{equation*}
Since $\sigma$ is a bijection, inclusion in the opposite direction can be argued analogously. This completes our proof for \eqref{eqn:pf_lr_relations_with_c_1}. Consequently,
\begin{eqnarray*}
\{l_{j,n}(\mathbf{b}') \mid j \in U_{n,1}(\mathbf{b}',l,r,k)\} &=& \{\iota_{m-1,n}(p'_{j'}) \mid j' \in U_{n,1}(\mathbf{b}',l,r,k)\} = \{\iota_{m,n}(p_{j}) \mid j \in U_{n,1}(\mathbf{b},l,r,k)\} \\
&=& \{l_{j,n}(\mathbf{b}) \mid j \in U_{n,1}(\mathbf{b},l,r,k)\}.
\end{eqnarray*}

\medskip

For iii), we see that
\begin{eqnarray*}
\{a'_{j'} \mid j' \in U_{n,2}(\mathbf{b}',l,r,k)\} &=& \{a'_{j'} \mid j' \in U_{n}(\mathbf{b}',l,r,k)\} \setminus \{a'_{j'} \mid j' \in U_{n,1}(\mathbf{b}',l,r,k)\} \\
&=& \{\sigma(a_{j}) \mid j \in U_{n}(\mathbf{b},l,r,k)\} \setminus \{\sigma(a_{j}) \mid j \in U_{n,1}(\mathbf{b},l,r,k)\} \\
&=& \{\sigma(a_{j}) \mid j \in U_{n,2}(\mathbf{b},l,r,k)\},
\end{eqnarray*}
where the second equality follows by \eqref{eqn:pf_lr_relations_with_c_0U} and \eqref{eqn:pf_lr_relations_with_c_1}. Hence,
\begin{eqnarray*}
\{r_{j,n}(\mathbf{b}') \mid j \in U_{n,2}(\mathbf{b}',l,r,k)\} &=& \{\iota_{m-1,n}(q'_{j'}) \mid j' \in U_{n,2}(\mathbf{b}',l,r,k)\} = \{\iota_{m,n}(q_{j}) \mid j \in U_{n,2}(\mathbf{b},l,r,k)\} \\
&=& \{r_{j,n}(\mathbf{b}) \mid j \in U_{n,2}(\mathbf{b},l,r,k)\}.
\end{eqnarray*}

\medskip

For iv), we see that
\begin{eqnarray*}
\{a'_{j'} \mid j' \in L_{n,2}(\mathbf{b}',l,r,k)\} &=& \{a'_{j'} \mid j' \in L_{n}(\mathbf{b}',l,r,k)\} = \{\sigma(a_{j}) \mid j \in L_{n}(\mathbf{b},l,r,k)\} \\
&=& \{\sigma(a_{j}) \mid j \in L_{n,2}(\mathbf{b},l,r,k)\},
\end{eqnarray*}
where the first and the last equalities follow from i) and Lemma~\ref{lem:lr_relations_with_c1}(i), respectively, and the second one is by \eqref{eqn:pf_lr_relations_with_c_0L}. Hence,
\begin{eqnarray*}
\{r_{j,n}(\mathbf{b}') \mid j \in L_{n,2}(\mathbf{b}',l,r,k)\} &=& \{\iota_{m-1,n}(q'_{j'}) \mid j' \in L_{n,2}(\mathbf{b}',l,r,k)\} = \{\iota_{m,n}(q_{j})-1 \mid j \in L_{n,2}(\mathbf{b},l,r,k)\} \\
&=& \{r_{j,n}(\mathbf{b})-1 \mid j \in L_{n,2}(\mathbf{b},l,r,k)\}.
\end{eqnarray*}
\end{proof}

\begin{lemma}
\label{lem:removable_arc_beta}
Assume that a Catalan state $C$ of $L(m,n)$ with no bottom returns has a removable arc $c = (y_{a},y'_{b})$, where $a,b \leq m$. Then
\begin{equation*}
\beta(C) = \beta(C \smallsetminus c) + \frac{n+b-a}{2}.
\end{equation*}
\end{lemma}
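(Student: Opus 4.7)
The plan is to write $\beta(C) - \beta(C \smallsetminus c) = \sum_{j=1}^{m} b_j - \sum_{j=1}^{m-1} b'_j$ as three contributions coming from $j < k$, $j = k$, and $j > k$, where $\mathbf{b} = (b_1, \ldots, b_m)$ and $\mathbf{b}' = (b'_1, \ldots, b'_{m-1})$ are the maximal sequences of $C$ and $C \smallsetminus c$, and $k$ is the index of $c$ relative to $\mathbf{b}$. Setting $l = l_{k,n}(\mathbf{b}) = -n+1-a$ and $r = r_{k,n}(\mathbf{b}) = b$, Lemma~\ref{lem:lr_relations_with_c1}(ii) immediately supplies $b_k = k-a$ and $|L_{n,2}(\mathbf{b},l,r,k)| = k - \tfrac{n+a+b}{2}$, which will carry the whole computation.

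For the tail sums, the arc identification $\mathcal{A}(\tau_k(C),k) = \mathcal{A}(\tau_{k-1}(C \smallsetminus c), k-1) \cup \{c\}$ established inside the proof of Lemma~\ref{lem:lr_relations_with_c2} forces the Catalan state equality $C_{(k)} = (C \smallsetminus c)_{(k-1)}$ in $\mathrm{Cat}(m-k, n)$. Applying Lemma~\ref{lem:catalan_state_subseq_b} to both $C$ and $C \smallsetminus c$ then equates the maximal sequence of this common state with both $(b_{k+1}, \ldots, b_m)$ and $(b'_k, \ldots, b'_{m-1})$, so $\sum_{j=k+1}^{m} b_j = \sum_{j=k}^{m-1} b'_j$ and the tail cancels.

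For the head sums, we combine Lemma~\ref{lem:condi_bk_equals_n} with Lemma~\ref{lem:lr_relations} into the uniform formula
\[
b_j = n + (j-1) + \min\{l_{j,n}(\mathbf{b}), 1 - r_{j,n}(\mathbf{b})\},
\]
already used implicitly in the proof of Corollary~\ref{cor:lr_relations}, and similarly for $b'_j$. Since $L_{n,1}(\mathbf{b},l,r,k) = L_{n,1}(\mathbf{b}',l,r,k) = \emptyset$ by Lemma~\ref{lem:lr_relations_with_c1}(i) and Lemma~\ref{lem:lr_relations_with_c2}(i), the set $\{1, \ldots, k-1\}$ admits the common disjoint decomposition $U_{n,1} \cup U_{n,2} \cup L_{n,2}$ relative to either maximal sequence, and on these pieces the minimum above evaluates to $l_j$ on $U_{n,1}$ and to $1 - r_j$ on $U_{n,2} \cup L_{n,2}$. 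The set equalities in Lemma~\ref{lem:lr_relations_with_c2}(ii)--(iv) are in fact multiset equalities since distinct arcs have distinct endpoints; they match the $l$-sum on $U_{n,1}$ and the $r$-sum on $U_{n,2}$ exactly, while on $L_{n,2}$ the shift $\{r_{j,n}(\mathbf{b}')\} = \{r_{j,n}(\mathbf{b})-1\}$ together with $|L_{n,2}(\mathbf{b}')| = |L_{n,2}(\mathbf{b})|$ produces the extra contribution
\[
\sum_{j \in L_{n,2}(\mathbf{b}')} (1 - r_{j,n}(\mathbf{b}')) - \sum_{j \in L_{n,2}(\mathbf{b})} (1 - r_{j,n}(\mathbf{b})) = |L_{n,2}(\mathbf{b}, l, r, k)|.
\]
Therefore $\sum_{j=1}^{k-1} b_j - \sum_{j=1}^{k-1} b'_j = -|L_{n,2}(\mathbf{b}, l, r, k)|$.

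Adding the three contributions,
\[
\beta(C) - \beta(C \smallsetminus c) = (k - a) - |L_{n,2}(\mathbf{b}, l, r, k)| = (k - a) - \left(k - \tfrac{n+a+b}{2}\right) = \tfrac{n+b-a}{2},
\]
as required. The main obstacle will be the clean justification of the tail identity $C_{(k)} = (C \smallsetminus c)_{(k-1)}$; once this is settled, the remainder is a bookkeeping exercise assembling Lemmas~\ref{lem:lr_relations_with_c1} and~\ref{lem:lr_relations_with_c2} with the uniform formula for $b_j$ and $b'_j$.
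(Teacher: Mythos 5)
Your proposal is correct and follows essentially the same route as the paper's proof: the same three-way split of $\Vert\mathbf{b}\Vert-\Vert\mathbf{b}'\Vert$ into head, $j=k$, and tail contributions, with the tail handled via $C_{(k)}=(C\smallsetminus c)_{(k-1)}$ and Lemma~\ref{lem:catalan_state_subseq_b}, the head via Lemmas~\ref{lem:lr_relations_with_c1}(i) and~\ref{lem:lr_relations_with_c2}(i)--(iv), and the final count from Lemma~\ref{lem:lr_relations_with_c1}(ii). Your explicit remark that the set equalities of Lemma~\ref{lem:lr_relations_with_c2}(ii)--(iv) are multiset equalities (so that they may be summed) is a small justification the paper leaves implicit, but the argument is otherwise the same.
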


\begin{proof}
Let $\mathbf{b} = (b_{1},b_{2},\ldots,b_{m})$ and $\mathbf{b}' = (b'_{1},b'_{2},\ldots,b'_{m-1})$ be maximal sequences for $C$ and $C' = C \smallsetminus c$, respectively. Denote by $k$ the index of $c$ relative to $\mathbf{b}$ and let $l = l_{k,n}(\mathbf{b})$ and $r = r_{k,n}(\mathbf{b})$. We show that
\begin{enumerate}
\item[i)] $b'_{1}+b'_{2}+\cdots+b'_{k-1} = (b_{1}+b_{2}+\cdots+b_{k-1})+|L_{n,2}(\mathbf{b},l,r,k)|$, and
\item[ii)] $b'_{j} = b_{j+1}$ for all $k \leq j \leq m-1$.
\end{enumerate}

\medskip

For i), denote by $U_{1}$, $U_{2}$, $L_{1}$, $L_{2}$, $U'_{1}$, $U'_{2}$, $L'_{1}$, and $L'_{2}$ the sets $U_{n,1}(\mathbf{b},l,r,k)$, $U_{n,2}(\mathbf{b},l,r,k)$, $L_{n,1}(\mathbf{b},l,r,k)$, $L_{n,2}(\mathbf{b},l,r,k)$, $U_{n,1}(\mathbf{b}',l,r,k)$, $U_{n,2}(\mathbf{b}',l,r,k)$, $L_{n,1}(\mathbf{b}',l,r,k)$, and $L_{n,2}(\mathbf{b}',l,r,k)$, respectively. By Lemma~\ref{lem:lr_relations},
\begin{equation*}
b_{j} = l_{j,n}(\mathbf{b})+n+(j-1)
\end{equation*}
for any $j \in U_{1} \cup L_{1}$ and
\begin{equation*}
b_{j} = n = n+j-r_{j,n}(\mathbf{b})
\end{equation*}
for any $j \in U_{2} \cup L_{2}$. Therefore,
\begin{equation*}
\sum_{j=1}^{k-1} b_{j} = \sum_{j \in U_{1} \cup L_{1}} (n+j+l_{j,n}(\mathbf{b})-1) + \sum_{j \in U_{2} \cup L_{2}} (n+j-r_{j,n}(\mathbf{b})).
\end{equation*}
Since $L_{1} = \emptyset$ by Lemma~\ref{lem:lr_relations_with_c1}(i), the above equation can be written as 
\begin{equation*}
\sum_{j=1}^{k-1} b_{j} = n(k-1) + \frac{1}{2}(k-1)k + \sum_{j \in U_{1}} (l_{j,n}(\mathbf{b})-1) - \sum_{j \in U_{2}} r_{j,n}(\mathbf{b}) - \sum_{j \in L_{2}} r_{j,n}(\mathbf{b}).
\end{equation*}
Analogously, as $L'_{1} = \emptyset$ by Lemma~\ref{lem:lr_relations_with_c2}(i), we see that
\begin{equation*}
\sum_{j=1}^{k-1} b'_{j} = n(k-1) + \frac{1}{2}(k-1)k + \sum_{j \in U'_{1}} (l_{j,n}(\mathbf{b}')-1) - \sum_{j \in U'_{2}} r_{j,n}(\mathbf{b}') - \sum_{j \in L'_{2}} r_{j,n}(\mathbf{b}').
\end{equation*}
Since by Lemma~\ref{lem:lr_relations_with_c2}(ii)-(iv),
\begin{equation*}
\sum_{j \in U'_{1}} (l_{j,n}(\mathbf{b}')-1) = \sum_{j \in U_{1}} (l_{j,n}(\mathbf{b})-1),
\end{equation*}
\begin{equation*}
\sum_{j \in U'_{2}} r_{j,n}(\mathbf{b}') = \sum_{j \in U_{2}} r_{j,n}(\mathbf{b}),
\end{equation*}
and
\begin{equation*}
\sum_{j \in L'_{2}} r_{j,n}(\mathbf{b}') = \sum_{j \in L_{2}} r_{j,n}(\mathbf{b})-|L_{2}|,
\end{equation*}
it follows that
\begin{equation*}
\sum_{j=1}^{k-1} b'_{j} - \sum_{j=1}^{k-1} b_{j} = |L_{2}|,
\end{equation*}
which proves i).

\medskip

To prove ii), it suffices to show that Catalan states with no bottom returns realized by $\mathbf{b}_{(k+1)} = (b_{k+1},b_{k+2},\ldots,b_{m})$ and $\mathbf{b}'_{(k)} = (b'_{k},b'_{k+1},\ldots,b'_{m-1})$ are same. However, by Lemma~\ref{lem:catalan_state_subseq_b}, this is equivalent to showing that $C_{(k)} = C'_{(k-1)}$. Since $c$ has index $k$ relative to $\mathbf{b}$, we see that 
\begin{equation*}
C_{(k)} = \tau_{k}(C) - \mathcal{A}(\tau_{k}(C),k) = \tau_{k-1}(C') - \mathcal{A}(\tau_{k-1}(C'),k-1) = C'_{(k-1)},
\end{equation*}
which completes the argument for ii).

\medskip

Since $b_{k} = k-a$ and $|L_{2}| = k-\frac{n+a+b}{2}$ by Lemma~\ref{lem:lr_relations_with_c1}(ii), it follows that
\begin{equation*}
\beta(C)-\beta(C \smallsetminus c) = b_{k}-|L_{2}| = \frac{n+b-a}{2}.
\end{equation*}
\end{proof}

\begin{lemma}
\label{lem:remove_an_arc_no_bot_rtn}
Assume that a Catalan state $C \in \mathrm{Cat}(m,n)$ with no bottom returns has a removable arc $c = (y_{a},y'_{b})$, where $a,b \leq m$. Then
\begin{equation*}
C(A) = A^{b-a} \, C'(A),
\end{equation*}
where $C' = C \smallsetminus c$.
\end{lemma}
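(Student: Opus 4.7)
The plan is to combine the three preceding results — Theorem~\ref{thm:coef_no_bot_rtn}, Lemma~\ref{lem:removable_arc_tree}, and Lemma~\ref{lem:removable_arc_beta} — which together already encapsulate essentially all the work. First I would note that the hypothesis $a,b \leq m$ guarantees that neither end of $c$ lies on the bottom boundary (since under the relabeling the bottom boundary points are those with indices in the range $m+1,\ldots,m+n$). Consequently the construction $C' = C \smallsetminus c$ falls into case i) of the definition, and $C'$ is a Catalan state in $\mathrm{Cat}(m-1,n)$ with no bottom returns.

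Next I would apply Theorem~\ref{thm:coef_no_bot_rtn} to both $C$ and $C'$, writing
\begin{equation*}
C(A) = A^{2\beta(C)-mn}\, Q^{*}_{A^{-4}}(T,v_{0},\alpha)
\quad \text{and} \quad
C'(A) = A^{2\beta(C')-(m-1)n}\, Q^{*}_{A^{-4}}(\tilde{T},v_{0},\tilde{\alpha}),
\end{equation*}
where $(T,v_{0},\alpha)$ and $(\tilde{T},v_{0},\tilde{\alpha})$ are the plane rooted trees with delay associated to $C$ and $C'$.

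Then I would invoke Lemma~\ref{lem:removable_arc_tree} to get $Q^{*}_{A^{-4}}(T,v_{0},\alpha) = Q^{*}_{A^{-4}}(\tilde{T},v_{0},\tilde{\alpha})$, and Lemma~\ref{lem:removable_arc_beta} to substitute $\beta(C) = \beta(C') + \tfrac{n+b-a}{2}$. Combining these,
\begin{equation*}
C(A) = A^{2\beta(C')+(n+b-a)-mn}\, Q^{*}_{A^{-4}}(\tilde{T},v_{0},\tilde{\alpha})
     = A^{b-a} \cdot A^{2\beta(C')-(m-1)n}\, Q^{*}_{A^{-4}}(\tilde{T},v_{0},\tilde{\alpha}) = A^{b-a}\, C'(A),
\end{equation*}
which is exactly the claim. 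Since all substantive geometric and combinatorial content was absorbed into the two preceding lemmas, there is no real obstacle here: the only thing to check is the exponent arithmetic and the dimension bookkeeping (namely that $C'$ lives in $\mathrm{Cat}(m-1,n)$, so the $-mn$ shift for $C$ becomes $-(m-1)n$ for $C'$, accounting for the extra $n$ that combines with $b-a$ to yield the desired $A^{b-a}$).
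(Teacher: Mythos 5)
Your argument follows exactly the same route as the paper: apply Theorem~\ref{thm:coef_no_bot_rtn} to $C$ and $C'$, use Lemma~\ref{lem:removable_arc_tree} to identify the normalized plucking polynomials, and use Lemma~\ref{lem:removable_arc_beta} to convert the $\beta$-shift into the power $A^{b-a}$. The exponent arithmetic and the observation that $C' \in \mathrm{Cat}(m-1,n)$ are both correct.

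The one thing you skip is the case where $C$ is not realizable. Theorem~\ref{thm:coef_no_bot_rtn} is stated only for realizable Catalan states, and $\beta(C)$ is defined as a maximum over $\mathfrak{b}(C)$, which is empty when $C$ is not realizable; so neither your first display nor the substitution $\beta(C) = \beta(C') + \tfrac{n+b-a}{2}$ is available in that case. The fix is short and is exactly what the paper does: Lemma~\ref{lem:removable_arc_tree} already shows that $C$ is realizable if and only if $C'$ is, so when $C$ is not realizable both sides of the claimed identity are zero by Theorem~\ref{thm:coef_nonzero}, and the formula holds trivially. With that case added, your proof is complete and coincides with the paper's.
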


\begin{proof}
By Lemma~\ref{lem:removable_arc_tree}, $C$ is realizable if and only if $C'$ is realizable. If $C$ is not realizable then the statement is true since 
\begin{equation*}
C(A) = 0 = A^{b-a} \, C'(A).    
\end{equation*}
If $C$ is realizable, let $\mathbf{b} = (b_{1},b_{2},\ldots,b_{m})$ and $\mathbf{b}' = (b'_{1},b'_{2},\ldots,b'_{m-1})$ be maximal sequences for $C$ and $C'$, and denote by $(T(C),v_{0},\alpha)$ and $(T(C'),v'_{0},\alpha')$ their plane rooted trees. Since by Lemma~\ref{lem:removable_arc_beta}
\begin{equation*}
2\beta(C)-mn = 2\beta(C')-(m-1)n+b-a,
\end{equation*}
it follows from Theorem~\ref{thm:coef_no_bot_rtn} and Lemma~\ref{lem:removable_arc_tree} that
\begin{equation*}
C(A) = A^{2\beta(C)-mn} \, Q_{A^{-4}}^{*}(T(C),v_{0},\alpha) = A^{2\beta(C')-(m-1)n+b-a} \, Q_{A^{-4}}^{*}(T(C'),v'_{0},\alpha') = A^{b-a} \, C'(A).
\end{equation*}
\end{proof}

In \cite{DW2022}, a partial order $\preceq$ on $\mathrm{Fin}(\mathbb{N})$ was defined as follows: For $I,J \in \mathrm{Fin}(\mathbb{N})$,
\begin{equation*}
J \preceq I \quad \text{if} \ (\varphi_{n_{I}}^{-1}(I))_{J} \neq K_{0}.
\end{equation*}

\begin{lemma}
\label{lem:removable_arc_phi}
Assume that a Catalan state $C \in \mathrm{Cat}(m,n)$ has a removable arc $c$ with none of its ends on the top boundary. Let $C = R * B$ and $C \smallsetminus c = R' * B'$, where $R,R'$ are roof states and $B,B'$ are bottom states. Then either $\varphi_{n}(B') = \varphi_{n}(B)$ or $\varphi_{n}(B') \prec \varphi_{n}(B)$, and in the latter case:
\begin{itemize}
\item[i)] For any $I \in \mathrm{Fin}(\mathbb{N})$, if $I \prec \varphi_{n}(B)$ then $I \preceq \varphi_{n}(B')$, and
\item[ii)] $\#(C \cap l^{h}_{m-1}) = n+2-2|\varphi_{n}(B)|$.
\end{itemize}
\end{lemma}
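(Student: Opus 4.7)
The proof plan proceeds by case analysis on whether the arc $c$ has any of its endpoints on the bottom boundary of $C$. In the first case, where $c$ has no bottom ends, the assumption that $c$ also has no top ends forces both endpoints of $c$ onto the left and right boundaries, so $c$ is a horizontal left-to-right arc. Then $C \smallsetminus c = \tau_{1-m}(\tau_m(C) - \{c\})$ simply deletes this arc and collapses its row, leaving the bottom returns of $C$ and $C \smallsetminus c$ identical with the same indexing. This immediately yields $\varphi_n(B') = \varphi_n(B)$.

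The more substantial case is when $c$ has at least one bottom end, which I would handle via $C \smallsetminus c = (C^* \smallsetminus c^*)^*$, where $c^*$ is a proper arc of $C^*$ with no bottom ends. The plan is to carefully track how the new bottom boundary of $C \smallsetminus c$ is formed. Under $\tau_m(C^*)$, the entire left and right boundary of $C^*$ (including the points corresponding to $y_m$ and $y'_m$ of $C$) is absorbed into the top; after removing $c^*$ and its endpoints and applying $\tau_{1-m}$, the two innermost top points coming from the former left and right boundaries survive on the new top of $C^* \smallsetminus c^*$, and the final $\pi$-rotation places these as the leftmost and rightmost points of the new bottom of $C \smallsetminus c$. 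Thus the bottom of $C \smallsetminus c$ consists of the surviving original bottom points of $C$ together with two shifted-in points originating from $y_m$ and $y'_m$ of $C$.

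Next I would perform a structural sub-case analysis on the arcs of $C$ incident to $y_m$ and $y'_m$, using the constraints of Definition~\ref{def:removable_arc} and Remark~\ref{rem:removable_arc}. The parallelism requirement and the absence of innermost bottom corners in $A_1$ restrict the possible local configurations to exactly two: either $(y_m, y'_m)$ is itself an arc of $C$, in which case after the shifting it becomes a new bottom return of $C \smallsetminus c$ that fully replaces the structure lost when $c$ is removed, yielding $\varphi_n(B') = \varphi_n(B)$; or $y_m$ and $y'_m$ are paired as a left return $(y_{m-1}, y_m)$ and a right return $(y'_{m-1}, y'_m)$ respectively, in which case no replacement bottom return is produced and the bottom structure strictly reduces, giving $\varphi_n(B') \prec \varphi_n(B)$.

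In the strict case, I would prove statement (i) by showing that the passage from $\varphi_n(B)$ to $\varphi_n(B')$ corresponds to peeling off the minimal possible bottom substate; unwinding the definition $J \preceq I$ via the factorization $\varphi_{n_I}^{-1}(I) = C'' * \varphi_{n_I}^{-1}(J)$, one verifies that any $I \prec \varphi_n(B)$ admits a factorization that passes through $\varphi_n(B')$, so $I \preceq \varphi_n(B')$. For statement (ii), I would decompose the count $\#(C \cap l^h_{m-1})$ into contributions: the $n - 2|\varphi_n(B)|$ top-to-bottom strands of $B$ contribute $n - 2|\varphi_n(B)|$ crossings; the bottom returns of $C$ can be drawn entirely below $l^h_{m-1}$ and contribute $0$; and in the strict sub-case the left return $(y_{m-1}, y_m)$ and the right return $(y'_{m-1}, y'_m)$ each cross $l^h_{m-1}$ exactly once, adding $2$. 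Summing gives $\#(C \cap l^h_{m-1}) = n + 2 - 2|\varphi_n(B)|$. The main obstacle is the sub-case analysis of Case~B: one must use the removability of $c$ and the parallelism conditions of Remark~\ref{rem:removable_arc} carefully to exclude mixed configurations near $y_m, y'_m$ and show that only the two listed possibilities occur.
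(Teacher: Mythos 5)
Your overall strategy---tracking combinatorially how the bottom boundary is transformed when $c$ is deleted and splitting into cases according to where the ends of $c$ lie---is the same as the paper's, and your handling of the case where $c$ has no bottom ends, as well as your arguments for (i) and (ii) once the correct case split is in place, match the paper's. The genuine gap is your claimed dichotomy of local configurations when $c$ is a bottom return: it is \emph{not} true that either $(y_m,y'_m)$ is an arc of $C$ or else $(y_{m-1},y_m)$ and $(y'_{m-1},y'_m)$ are both returns. Removability (Definition~\ref{def:removable_arc}, Remark~\ref{rem:removable_arc}) forbids innermost bottom corners and constrains returns, but it places no constraint on an arc joining $y_m$ to a top-boundary point, nor does it exclude arcs $(y_m,x'_k)$ or $(y'_m,x'_k)$ with $k\neq 1,n$. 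Concretely, take $C\in\mathrm{Cat}(2,2)$ with arcs $(x'_1,x'_2)=c$, $(x_1,y_1)$, $(x_2,y_2)$, $(y'_1,y'_2)$: here $c$ is removable (with $j_0=1$), the conclusion is strict ($\varphi_2(B)=\{1\}$, $\varphi_2(B')=\emptyset$), yet $y_m=y_2$ is joined to the top-boundary point $x_2$, not to $y_1$, so your configuration (B) fails. Dually, the equality outcome is not confined to $(y_m,y'_m)\in C$: in $\mathrm{Cat}(1,4)$ the removable bottom return $c=(x'_1,x'_2)$ can coexist with an arc $(y_1,x'_3)$, which after deletion of $c$ becomes the new outermost bottom return and keeps $\varphi_4(B)$ unchanged, so your configuration (A) also misses cases. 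The correct split, which is the one the paper uses, is whether or not $C$ contains an arc with ends $y_m,y'_m$, or $y_m,x'_k$, or $y'_m,x'_k$: in the affirmative case that arc is parallel to $c$ and replaces it as a bottom return (equality), and in the negative case no replacement appears and exactly $\min\varphi_n(B)$ is lost (strict). Since you flag this classification as the main obstacle and then assert the false version of it, the proof as planned does not go through.

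A secondary problem is that you fold the sub-case in which $c$ has \emph{exactly one} end on the bottom boundary into the bottom-return analysis. Your boundary-tracking claim that two points originating from $y_m$ and $y'_m$ are shifted into the new bottom is valid only when both ends of $c^{*}$ lie on the top boundary of $C^{*}$, i.e.\ when $c$ is a bottom return; when $c$ joins a side point to a bottom point, only one side point is shifted in, no new bottom return is created, and the indices of the surviving bottom returns are unchanged, giving $\varphi_n(B')=\varphi_n(B)$ directly. This sub-case must be dispatched separately before the bottom-return analysis, as the paper does.
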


\begin{proof}
If $c$ has no end on the bottom boundary of $C$ then $B' = B$ and consequently 
\begin{equation*}
\varphi_{n}(B') = \varphi_{n}(B).    
\end{equation*}
Let $A_{1}$ and $A_{2}$ be the regions determined by $c$ (see Definition~\ref{def:removable_arc}) and assume that $c$ has exactly one of its ends on the bottom boundary. Since $c$ is removable, $C$ has neither innermost bottom corners nor bottom returns which are not parallel to $c$ in $A_{1}$ by Remark~\ref{rem:removable_arc}. Moreover, no bottom returns can be parallel to $c$ in $A_{1}$ and, as one may check, after $c$ is removed no new bottom return is created and indices of left ends of bottom returns in $A_{2}$ do not change. Therefore,
\begin{equation*}
\varphi_{n}(B') = \varphi_{n}(B).
\end{equation*}

If $c$ is a bottom return, we consider cases a) $C$ has an arc $c'$ with ends $y_{m},y'_{m}$ or $y_{m},x'_{k}$ or $y'_{m},x'_{k}$, or b) $C$ has no such an arc. In the case a), we notice that any bottom return of $C$ in $A_{1}$ must be parallel to $c$ by Remark~\ref{rem:removable_arc}. Moreover, as it can easily be seen, $c'$ is parallel to $c$ in $C$ and in $C' = C \smallsetminus c$ the arc corresponding to $c'$ is a bottom return with left end $x'_{i}$, where $i = \min \varphi_{n}(B)$. Furthermore, after $c$ is removed from $C$, indices of left ends of bottom returns in $A_{2}$ do not change in $C'$ while indices of left ends of bottom returns in $A_{1}$ increase by $1$ in $C'$. Therefore,
\begin{equation*}
\varphi_{n}(B') = \varphi_{n}(B). 
\end{equation*}

In the case b), as we argued above, all bottom returns of $C$ which are in $A_{1}$ must be parallel to $c$. Moreover, after $c$ is removed from $C$, we see that no new bottom return will appear in $C'$, indices of left ends of bottom returns of $C$ in $A_{2}$ remain the same in $C'$, and indices of left ends of bottom returns in $A_{1}$ increase by $1$ in $C'$. Hence, 
\begin{equation*}
\varphi_{n}(B') = \varphi_{n}(B) \setminus \{i\},
\end{equation*}
where $i = \min \varphi_{n}(B)$. Thus, by Proposition~3.2(i) of \cite{DW2022},
\begin{equation*}
\varphi_{n}(B) = \{i\} \cup \varphi_{n}(B') = \{i\} \oplus \varphi_{n}(B'),
\end{equation*}
and consequently, 
\begin{equation*}
\varphi_{n}(B') \prec \varphi_{n}(B).    
\end{equation*}

For i), let $c''$ be the bottom return with left end $x'_{i}$, where $i = \min \varphi_{n}(B)$. We see that bottom returns of $B$ other than $c''$ are in the region enclosed by $c''$ and the bottom boundary of $B$. Therefore, if $I \prec \varphi_{n}(B)$ then $i \notin I$. Let $J = \varphi_{n-2|I|}(B_{I})$, then $\varphi_{n}(B) = J \oplus I$ and clearly $i = \min J$, so
\begin{equation*}
J = \{i\} \cup J' = \{i\} \oplus J',
\end{equation*}
where $J' = J \setminus \{i\}$. Since all elements of $J' \oplus I$ are larger than $i$,
\begin{equation*}
\{i\} \cup \varphi_{n}(B') = \varphi_{n}(B) = J \oplus I = \{i\} \oplus J' \oplus I = \{i\} \cup (J' \oplus I).
\end{equation*}
Therefore, $\varphi_{n}(B') = J' \oplus I$ and consequently 
\begin{equation*}
I \preceq \varphi_{n}(B').    
\end{equation*}

For ii), since there are $n+2$ points below line $l^{h}_{m-1}$ and there are no other connections between those points except for $2|\varphi_{n}(B)|$ of them (which are the ends of bottom returns of $C$), it follows that
\begin{equation*}
\#(C \cap l^{h}_{m-1}) = n+2-2|\varphi_{n}(B)|.    
\end{equation*}
\end{proof}

Now we are ready to prove the main result in this section.

\begin{proof}[Proof of Theorem~\ref{thm:remove_an_arc}]
We may assume that $c$ is a removable arc with none of its ends on the top boundary since otherwise we consider a $\pi$-rotation $C^{*}$ of $C$ and its removable arc $c^{*}$ (the image of $c$) instead. In particular, if $y_{a^{*}}$ and $y'_{b^{*}}$ are ends of $c^{*}$ then 
\begin{equation*}
a^{*} = m+1-b \quad \text{and} \quad  b^{*} = m+1-a,   
\end{equation*}
so $b^{*}-a^{*} = b-a$. Therefore, if 
\begin{equation*}
C^{*}(A) = A^{b^{*}-a^{*}} (C^{*} \smallsetminus c^{*})(A),
\end{equation*}
then
\begin{equation*}
C(A) = C^{*}(A) = A^{b^{*}-a^{*}} \, (C^{*} \smallsetminus c^{*})(A) = A^{b-a} \, (C^{*} \smallsetminus c^{*})^{*}(A) = A^{b-a} \, C'(A).
\end{equation*}

Let $C = R * F$, where $R$ is a top state and $F$ is a floor state. Since $c$ has no ends on the top boundary, $c$ is an arc of $F$. Therefore, $F' = F \smallsetminus c$ is defined and $C' = R * F'$. By Theorem~\ref{thm:main}, there is a $\Theta_{A}$-state expansion for $(R,\emptyset)$
\begin{equation*}
\Theta_{A}(R,\emptyset;\cdot) = \sum_{(R',I') \in \mathcal{P}'} Q_{R',I'}(A) \, \Theta_{A}(R',I';\cdot),
\end{equation*}
so to prove that
\begin{equation*}
\Theta_{A}(R,\emptyset;F) = A^{b-a} \, \Theta_{A}(R,\emptyset;F')    
\end{equation*}
it suffices to show that 
\begin{equation}
\label{eqn:pf_remove_an_arc}
\Theta_{A}(R',I';F) = A^{b-a} \, \Theta_{A}(R',I';F')
\end{equation}
for all $(R',I') \in \mathcal{P}'$.

Let $F = M * B$ and $F' = M' * B'$, where $M$, $M'$ are middle states and $B$, $B'$ are bottom states. By Lemma~\ref{lem:removable_arc_phi}, for $B$ and $B'$ either 
\begin{enumerate}
\item[a)] $\varphi_{n}(B') \prec \varphi_{n}(B)$ or 
\item[b)] $\varphi_{n}(B') = \varphi_{n}(B)$.   
\end{enumerate}

In the case a), as in the proof of Lemma~\ref{lem:removable_arc_phi}, one argues that $c$ is a bottom return of $C$ and there is no arcs of $C$ joining $y_{m},y'_{m}$ or $y_{m},x'_{k}$ or $y'_{m},x'_{k}$. Consider the following cases
\begin{enumerate}
\item[i)] $I' \npreceq \varphi_{n}(B)$, 
\item[ii)] $I' \preceq \varphi_{n}(B)$ and $I' \npreceq \varphi_{n}(B')$, and 
\item[iii)] $I' \preceq \varphi_{n}(B')$.
\end{enumerate}

For i), since clearly $I' \npreceq \varphi_{n}(B')$,
\begin{equation*}
R' * F_{I'} = K_{0} = R' * (F')_{I'}. 
\end{equation*}
Thus, both sides of \eqref{eqn:pf_remove_an_arc} are zero. 

For ii), by Lemma~\ref{lem:removable_arc_phi}, we see that $I' = \varphi_{n}(B)$ and
\begin{equation*}
\#(F_{I'} \cap l^{h}_{m-1}) = \#(C \cap l^{h}_{m-1}) =  n+2-2|I'| > n-2|I'| = n_{t}(R').
\end{equation*}
Hence, by Theorem~\ref{thm:vh_line_condi}, $R' * F_{I'}$ is not a realizable Catalan state and consequently both sides of \eqref{eqn:pf_remove_an_arc} are zero. 

For iii), we see that both $C_{1} = R' * F_{I'}$ and $C_{2} = R' * (F')_{I'}$ are Catalan states with no top returns. Let $j$ be the index of the left end of $c$ and let $c'$ be the bottom return of $C$ whose left end has index $i = \min \varphi_{n}(B) \leq j$. As we argued in the proof of Lemma~\ref{lem:removable_arc_phi}, $c'$ is parallel to $c$ and hence $B' = B \smallsetminus c = B \smallsetminus c'$. Thus, 
\begin{equation*}
\varphi_{n}(B') = \varphi_{n}(B) \setminus \{i\},    
\end{equation*}
so, in particular, 
\begin{equation*}
\min \varphi_{n}(B') > i,    
\end{equation*}
consequently $i \notin I'$. Therefore, $F_{I'} = M*B_{I'}$ has a bottom return $c_{1}$ corresponding to $c'$ in $F = M*B$.

We show that $c_{1}$ is a removable arc of $C_{1} = R'*F_{I'}$. Indeed, since the removable arc $c$ is a bottom return, by Remark~\ref{rem:removable_arc}, $C$ has neither innermost bottom corners nor bottom returns in $A'_{1}$, where $A'_{1}$ is the region determined by $c'$ which touches the top boundary of $C$. It follows that $C_{1}$ has neither innermost bottom corners nor bottom returns in $A''_{1}$, where $A''_{1}$ is the region determined by $c_{1}$ which touches the top boundary of $C_{1}$. Consequently, as one may check, $c_{1}$ is a removable arc of $C_{1}$.

Clearly, $C_{1} \smallsetminus c_{1} = C_{2}$ and $C_{1}^{*} \smallsetminus c_{1}^{*} = C_{2}^{*}$, where $c_{1}^{*}$ is the arc of $C_{1}^{*}$ corresponding to $c_{1}$ in $C_{1}$. Let $y_{a_{1}^{*}}$ and $y'_{b_{1}^{*}}$ be the left and right ends of $c_{1}^{*}$, respectively. Since $c_{1}^{*}$ is a removable arc of $C_{1}^{*}$, by Lemma~\ref{lem:remove_an_arc_no_bot_rtn},
\begin{equation*}
C_{1}^{*}(A) = A^{b_{1}^{*}-a_{1}^{*}} \, (C_{1}^{*} \smallsetminus c_{1}^{*})(A) = A^{b_{1}^{*}-a_{1}^{*}} \, C_{2}^{*}(A)
\end{equation*}
and consequently,
\begin{equation*}
\Theta_{A}(R',I';F) = C_{1}(A) = A^{b_{1}^{*}-a_{1}^{*}} \, C_{2}(A) = A^{b_{1}^{*}-a_{1}^{*}} \, \Theta_{A}(R',I';F').
\end{equation*}
Therefore, to prove \eqref{eqn:pf_remove_an_arc}, it suffices to show that $b_{1}^{*}-a_{1}^{*} = b-a$. Indeed, let $x'_{j'}$ be the right end of $c$ and let $x'_{i'}$ be the right end of $c'$, then clearly 
\begin{equation*}
a = \mathrm{ht}(M)+j, \quad b = \mathrm{ht}(M)+n+1-j',
\end{equation*}
and $i'-j' = j-i$. Let $y_{a_{1}}$ and $y'_{b_{1}}$ be the left and right ends of $c_{1}$, respectively. As one may show,
\begin{equation*}
a_{1} = \mathrm{ht}(R')+\mathrm{ht}(M)+i \quad \text{and} \quad b_{1} = \mathrm{ht}(R')+\mathrm{ht}(M)+(n-2|I'|)+1-(i'-2|I'|).
\end{equation*}
Moreover, since 
\begin{equation*}
a_{1}^{*} = \mathrm{ht}(C_{1})+1-b_{1} \quad \text{and} \quad b_{1}^{*} = \mathrm{ht}(C_{1})+1-a_{1}, 
\end{equation*}
it follows that $b_{1}^{*}-a_{1}^{*} = b_{1}-a_{1} = b-a$, which finishes our proof for iii).

In the case b), if $I' \npreceq \varphi_{n}(B)$ then \eqref{eqn:pf_remove_an_arc} holds since its both sides are clearly zero. If $I' \preceq \varphi_{n}(B)$ then $C_{1} = R'*F_{I'}$ and $C_{2} = R'*(F')_{I'}$ are Catalan states with no top returns. As we argued in the proof of Lemma~\ref{lem:removable_arc_phi}, either $c$ is not a bottom return or $c$ is a bottom return in which case there is an arc $c'$ (parallel to $c$) joining $y_{m},y'_{m}$ or $y_{m},x'_{k}$ or $y'_{m},x'_{k}$. In the former case, $c_{1} = c$ is a removable arc of $C_{1}$ and in the latter case, $c_{1} = c'$ is a removable arc of $C_{1}$. Moreover, as one may check, $C_{1} \smallsetminus c_{1} = C_{2}$ in both cases. Therefore, \eqref{eqn:pf_remove_an_arc} follows by using arguments analogous to the ones in iii) since one can apply Lemma~\ref{lem:remove_an_arc_no_bot_rtn} to $C_{1}^{*}$ and its removable arc $c_{1}^{*}$.

Finally, by Theorem~\ref{thm:coef_nonzero}, it is clear that $C$ is realizable if and only if $C'$ is realizable.
\end{proof}

As an application of Theorem~\ref{thm:remove_an_arc}, we give closed-form formulas for coefficients of Catalan states of $L(m,3)$. This result was first mentioned (without a complete proof) in Proposition~5.4 of \cite{DP2019}. A Catalan state $C \in \mathrm{Cat}(m,n)$ is called \emph{vertically decomposable} if
\begin{equation*}
\#(C \cap l^{h}_{i}) = n
\end{equation*}
for some $0 \leq i \leq m$, and it is called \emph{vertically indecomposable} if no such $i$ exists.

\begin{corollary}
\label{cor:coefficients_n3}
Let $C \in \mathrm{Cat}(m,3)$ be a realizable Catalan state. Then 
\begin{equation}
\label{eqn:coefficients_n3_decom}
C(A) = A^{a} (A^{-2}+A^{2})^{b} (A^{-4}+1+A^{4})^{c}
\end{equation}
for some $a \in \mathbb{Z}$ and $b,c \in \mathbb{N} \cup \{0\}$ if $C$ is vertically decomposable, and
\begin{equation}
\label{eqn:coefficients_n3_indecom}
C(A) = A^{a} (A^{-2}+A^{2})^{b} \, \frac{(A^{-2}+A^{2})^{2c}-1}{A^{-4}+1+A^{4}}
\end{equation}
for some $a \in \mathbb{Z}$, $b \in \{0,1\}$, and $c \in \mathbb{N}$ if $C$ is vertically indecomposable.
\end{corollary}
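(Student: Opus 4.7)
The plan is to prove both formulas by induction on $m + |L(C)|$, where $|L(C)|$ denotes the number of left and right returns of $C$, using the Removable Arc Theorem (Theorem~\ref{thm:remove_an_arc}) as the principal reduction tool. The factor $A^{b-a}$ appearing in Theorem~\ref{thm:remove_an_arc} only shifts the integer exponent $a$ in the target formulas, preserving their structural shape without altering $b$ or $c$, so the only non-trivial bookkeeping is to track whether the decomposability type of the reduced Catalan state matches that of the original.

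For the inductive step, I would show that any realizable $C \in \mathrm{Cat}(m,3)$ with a proper arc admits a removable one. For $n=3$, only very restricted arcs can occur (at most one top return, namely an adjacent pair $(x_1,x_2)$ or $(x_2,x_3)$; similarly at most one bottom return; corner arcs and left-to-right through arcs are also limited in placement), so a case analysis via Remark~\ref{rem:removable_arc} identifies a removable arc among the proper arcs in every configuration with at least one proper arc. Applying Theorem~\ref{thm:remove_an_arc} then gives $C(A) = A^{b-a}(C \smallsetminus c)(A)$, and the induction hypothesis applies, with the decomposable/indecomposable classification carrying through because removing a single proper arc does not alter the underlying profile of through arcs that governs the horizontal-slice intersection counts.

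The base cases are Catalan states $C \in \mathrm{Cat}(m,3)$ with no proper arcs: every arc of $C$ is a top-to-bottom through arc or a left/right return. Such $C$ has $\#(C \cap l^h_i) = 3$ at every intermediate level, hence is vertically decomposable, and it factorizes as $C = C_1 * C_2 * \cdots * C_m$ with each $C_j \in \mathrm{Cat}(1,3)$. A direct Kauffman-bracket computation for single-row $C_j$ (at most $2^3 = 8$ Kauffman states) yields $C_j(A) \in \{A^{a_j}, A^{a_j}(A^{-2}+A^2), A^{a_j}(A^{-4}+1+A^4)\}$, and multiplying across rows produces the decomposable form $A^a (A^{-2}+A^2)^b (A^{-4}+1+A^4)^c$.

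The main obstacle is the vertically indecomposable case. When $C$ cannot be reduced to a vertically decomposable form, some intermediate level must satisfy $\#(C \cap l^h_i) < 3$ throughout all reductions, forcing interlocked patterns of left and right returns that give rise to the quotient factor $\frac{(A^{-2}+A^2)^{2c}-1}{A^{-4}+1+A^4}$. To handle this, I would classify the minimal vertically indecomposable Catalan states for $n=3$ (expected to be a one-parameter family indexed by $c \geq 1$ measuring the depth of interlocking) and compute their coefficients directly via Theorem~\ref{thm:coef_no_bot_rtn}, evaluating the plucking polynomial at $q = A^{-4}$. The hardest step is showing that this evaluation equals $\sum_{j=1}^c \binom{c}{j} \, (A^{-4}+1+A^4)^{j-1}$ (using the identity $(A^{-2}+A^2)^2 = (A^{-4}+1+A^4)+1$), which matches exactly the quotient factor in the indecomposable formula.
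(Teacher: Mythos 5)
Your central reduction claim is false: it is not true that every realizable $C \in \mathrm{Cat}(m,3)$ containing a proper arc contains a \emph{removable} proper arc. Take the vertically indecomposable state with $(\pi(x_{2}),\pi(x'_{2}))=(x_{1},x'_{1})$ (Figure~\ref{fig:coefficients_n3}(d)): its proper arcs are exactly the top return $(x_{1},x_{2})$, the bottom return $(x'_{1},x'_{2})$, and the corner arcs $(x_{3},y'_{1})$, $(x'_{3},y'_{m})$; all other arcs are left or right returns, hence not proper. None of the four is removable. For instance, for $c=(x_{1},x_{2})=(y_{-1},y_{0})$ the corner arc $(x_{3},y'_{1})=(y'_{0},y'_{1})$ has $j=0\leq j_{0}$ for every admissible $j_{0}\geq 0$, so Definition~\ref{def:removable_arc} would force it into $A_{1}$ (the lens between $c$ and the top boundary), where it does not lie; the corner arc $(x_{3},y'_{1})$ fails symmetrically because $(x_{1},x_{2})$ has $j=-1\leq j_{0}$ but sits in $A_{2}$. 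Worse, your proposed terminal objects are essentially vacuous: a realizable state of $\mathrm{Cat}(m,3)$ with no proper arcs has all three top points joined to bottom points by through arcs, and then any left return $(y_{i},y_{i+1})$ would give $\#(C\cap l^{h}_{i})\geq 4$, contradicting Theorem~\ref{thm:vh_line_condi}; so such a state has no left or right returns at all. Since removing a removable (hence proper) arc never changes the number of left/right returns, no state containing a left return could ever reach your base case. Had your reduction claim been true, every realizable state would have a coefficient of the form \eqref{eqn:coefficients_n3_decom}, contradicting \eqref{eqn:coefficients_n3_indecom}; your separate treatment of "minimal indecomposable" states is thus not an extra case but a symptom of the inconsistency.

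The argument that actually works (and is the one in the paper) is ordered differently: first decompose $C=C_{1}*\cdots*C_{k}$ along \emph{all} lines with $\#(C\cap l^{h}_{i})=3$ and apply the product formula of Theorem~5.1 of \cite{DP2019}; then, inside each irreducible factor, apply Theorem~\ref{thm:remove_an_arc} only to arcs of the special form $(y_{i-1},y'_{i})$ or $(y_{i},y'_{i-1})$, which one checks \emph{are} removable there; the residual states are then completely determined by the pair $(\pi(x_{2}),\pi(x'_{2}))$, yielding finitely many one-parameter zigzag families whose coefficients are computed by Proposition~5.2 of \cite{DP2019}. Your identity $\frac{(A^{-2}+A^{2})^{2c}-1}{A^{-4}+1+A^{4}}=\sum_{j=1}^{c}\binom{c}{j}(A^{-4}+1+A^{4})^{j-1}$ is correct and consistent with that computation, but the classification of the residual states, the verification that the reduction reaches them, and a justification (rather than an assertion) that removability interacts correctly with the decomposable/indecomposable dichotomy are all missing; the paper avoids the last issue entirely by decomposing before removing.
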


\begin{proof}
If $m = 0$ then $C = L(0,3)$ is vertically decomposable and $C(A) = 1$ which agrees with \eqref{eqn:coefficients_n3_decom} when $a = b = c = 0$. If $m = 1$, there are eight realizable Catalan states and, as one may check, $C(A) = A^{\pm 3}$ or $A^{\pm 1}$ if $C$ is vertically decomposable and $C(A) = A^{\pm 1}$ if $C$ is vertically indecomposable. As it can easily be seen, with appropriate choices made for $a$, $b$, and $c$, either $C(A)$ is given by \eqref{eqn:coefficients_n3_decom} or \eqref{eqn:coefficients_n3_indecom}. Hence, we assume that $m \geq 2$. 

For a vertically decomposable Catalan state $C$ of $L(m,3)$, we find all integers $1 \leq i_{1} < i_{2} < \cdots < i_{k-1} \leq m-1$ such that, $\#(C \cap l^{h}_{i_{j}}) = 3$. Let $i_{0} = 0$ and $i_{k} = m$ and let $m_{j} = i_{j}-i_{j-1}$ for $j = 1,2,\ldots,k$. Then we can express $C$ as
\begin{equation}
\label{eqn:irred_decom}
C = C_{1} * C_{2} * \cdots * C_{k},    
\end{equation}
where each $C_{j}$ is a vertically decomposable Catalan state of $L(m_{j},3)$ satisfying $\#(C_{j} \cap l^{h}_{i}) < 3$ for $i = 1,2,\ldots,m_{j}-1$. By Theorem~5.1 of \cite{DP2019},
\begin{equation*}
C(A) = C_{1}(A) \cdot C_{2}(A) \cdot \cdots \cdot C_{k}(A).
\end{equation*}
Hence, to prove that $C(A)$ is given by \eqref{eqn:coefficients_n3_decom}, it suffices to show that each $C_{j}(A)$ is given by \eqref{eqn:coefficients_n3_decom}.

Consider a Catalan state $C$ of $L(m,3)$ such that $\#(C \cap l^{h}_{i}) < 3$ for $i = 1,2,\ldots,m-1$. For any boundary point $x$ of $C$, let $y = \pi(x)$ be the unique boundary point of $C$ connected to $x$ by an arc of $C$. Then
\begin{equation*}
\pi(y_{i}) \in \{y_{i-1},y_{i+1},y'_{i-1},y'_{i+1}\} \end{equation*}
for all $1 \leq i \leq m$, where $y_{0} = x_{1}$, $y_{m+1} = x'_{1}$, $y'_{0} = x_{3}$, and $y'_{m+1} = x'_{3}$. Notice that, an arc $c$ of $C$ with ends $y_{i-1},y'_{i}$ or $y_{i},y'_{i-1}$ for some $1 \leq i \leq m+1$ is removable. Therefore, by Theorem~\ref{thm:remove_an_arc}
\begin{equation*}
C(A) = A^{\pm(i-(i-1))} \, C'(A) = A^{\pm 1} \, C'(A),
\end{equation*}
where $C' = C \smallsetminus c$. Hence, we only need to consider cases of Catalan states $C$ which have no arcs with ends $y_{i-1},y'_{i}$ or $y_{i},y'_{i-1}$, $1 \leq i \leq m+1$. Moreover,
\begin{equation*}
\begin{array}{lll}
\pi(x_{1}) \in \{y_{1},x_{2}\}, & \pi(x_{2}) \in \{x_{1},x_{3},y_{2},y'_{2}\}, & \pi(x_{3}) \in \{x_{2},y'_{1}\}, \\
\pi(x'_{1}) \in \{y_{m},x'_{2}\}, & \pi(x'_{2}) \in \{x'_{1},x'_{3},y_{m-1},y'_{m-1}\}, & \pi(x'_{3}) \in \{x'_{2},y'_{m}\},
\end{array}
\end{equation*}
and, as it could easily be seen, each choice of $\pi(x_{2})$ and $\pi(x'_{2})$ completely determines $C$.

\begin{figure}[htb]
\centering
\includegraphics[scale=1]{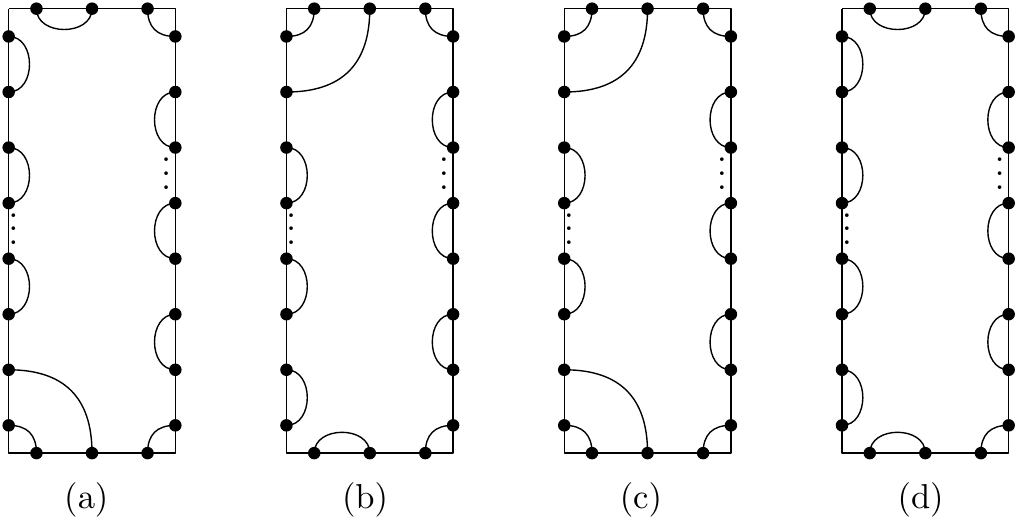}
\caption{Cases $(\pi(x_{2}),\pi(x'_{2})) = (x_{1},y_{m-1}),(y_{2},x'_{1}),(y_{2},y_{m-1}),(x_{1},x'_{1})$ when $m$ is even}
\label{fig:coefficients_n3}
\end{figure}

There are sixteen choices for pairs of points $\pi(x_{2})$ and $\pi(x'_{2})$ but only half of them are possible for a given $m \geq 2$. When $m$ is even, six out of the eight cases are vertically decomposable (three of them are shown in Figure~\ref{fig:coefficients_n3}(a)-(c) and the remaining three are obtained by reflecting them about a vertical line) and the remaining two cases are vertically indecomposable (one of them is shown in Figure~\ref{fig:coefficients_n3}(d) and the other one is obtained by reflecting it about a vertical line). Standard calculations (see Proposition~5.2 in \cite{DP2019}) show that $C(A)$ has a form given by \eqref{eqn:coefficients_n3_decom} or \eqref{eqn:coefficients_n3_indecom} depending on whether $C$ is vertically decomposable or indecomposable. Analogous argument applies when $m$ is odd.
\end{proof}

\begin{remark}
\label{rem:coefficients_n3}
The proof of Proposition~5.4 given in \cite{DP2019} was based on an observation (which authors promised to address in a follow-up paper) that coefficients of all ``irreducible'' factors in \eqref{eqn:irred_decom} can be determined (up to a power of $A$) using Proposition~5.2 of \cite{DP2019}. This observation is justified in our proof of Corollary~\ref{cor:coefficients_n3} by applying Theorem~\ref{thm:remove_an_arc}.
One can also show that for any $a \in \mathbb{Z}$ and $b,c \in \mathbb{N} \cup \{0\}$, there is a vertically decomposable Catalan state $C$ of $L(m,3)$ for some $m$, such that $C(A)$ is given by \eqref{eqn:coefficients_n3_decom}. Analogous result holds for \eqref{eqn:coefficients_n3_indecom}.
\end{remark}

\section{Vertical Factorization Theorem}
\label{s:vertical_factor_thm}

In this section, for a Catalan state $C$ admitting a family $\Lambda$ of arcs which ``vertically factorizes'' $C$ (see Definition~\ref{def:vertical_factor}), we use $\Theta_{A}$-state expansion to show that $C(A)$ factors into a product of $C_{T(\Lambda)}(A)$ and $C_{\Lambda}(A)$ (see Theorem~\ref{thm:vertical_factor_thm}). The Catalan states $C_{T(\Lambda)}$ and $C_{\Lambda}(A)$ in this factorization depend only on $\Lambda$ and $C \setminus \Lambda$, respectively, and their coefficients are easier to compute.

For a Catalan state $C \in \mathrm{Cat}(m,n)$, a non-empty subset $\Lambda$ of the set of its arcs is called a \emph{local family of arcs} if the set $E_{\Lambda}$ of ends of arcs in $\Lambda$ consists of consecutive boundary points of $C$ (see example in Figure~\ref{fig:C_Lambda}(a)). Given a local family of arcs $\Lambda$ of $C$, let $C_{\Lambda}$ be the Catalan state obtained from $C$ by replacing arcs in $\Lambda$ by a family of parallel arcs whose set of ends is $E_{\Lambda}$ (see Figure~\ref{fig:C_Lambda}(b)). Denote by $C \setminus \Lambda$ the complement of $\Lambda$ in set of arcs of $C$. For $C - (C \setminus \Lambda)$ obtained from $C$ by deleting arcs in $C \setminus \Lambda$, let $(T(\Lambda),v_{0})$ be its dual plane rooted tree (see Figure~\ref{fig:C_Lambda}(c)). Denote by $\lambda$ the number of elements of $\Lambda$ and let $C_{T(\Lambda)} \in \mathrm{Cat}(\lambda,2\lambda)$ be the unique Catalan state satisfying $T(C_{T(\Lambda)}) = T(\Lambda)$ and all of its bottom-boundary points are connected to either the left- or right-boundary points (see Figure~\ref{fig:C_Lambda}(d)).

\begin{figure}[htb]
\centering
\includegraphics[scale=1]{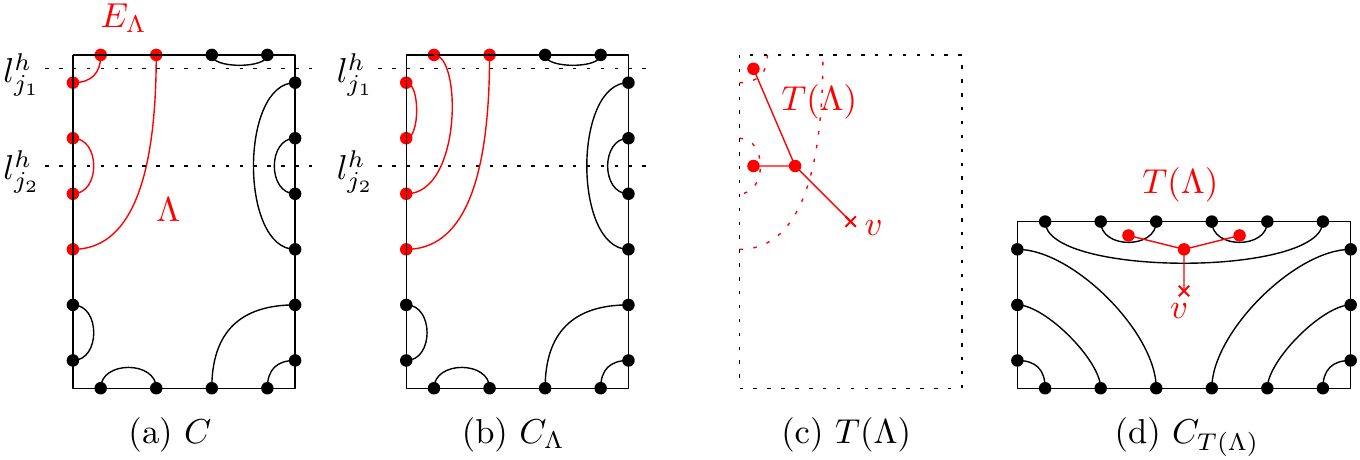}
\caption{Catalan states $C$, $C_{\Lambda}$, plane rooted tree $T(\Lambda)$ and Catalan state $C_{T(\Lambda)}$}
\label{fig:C_Lambda}
\end{figure}

\begin{definition}
\label{def:vertical_factor}
We say that a local family of arcs $\Lambda$ of $C \in \mathrm{Cat}(m,n)$ \emph{vertically factorizes} $C$ if
\begin{itemize}
\item[i)] either $E_{\Lambda} \cap \{y_{1},y_{2},\ldots,y_{m}\} = \emptyset$ or $E_{\Lambda} \cap \{y'_{1},y'_{2},\ldots,y'_{m}\} = \emptyset$, and
\item[ii)] there are integers $j_{1},j_{2}$ such that, $0 \leq j_{1} \leq j_{2} \leq m$ and for any arc $c$ of $C$ with ends $y_{j},y_{j+1}$ or $y'_{j},y'_{j+1}$, if $c \in \Lambda$ then $j_{1} \leq j \leq j_{2}$, otherwise $j \leq j_{1}$ or $j \geq j_{2}$.
\end{itemize}
\end{definition}

Condition ii) in Definition~\ref{def:vertical_factor} has a simple geometric interpretation, i.e., there are horizontal lines $l^{h}_{j_{1}}$ and $l^{h}_{j_{2}}$ with $j_{1} \leq j_{2}$ such that, the region between them does not include any returns in $C \setminus \Lambda$ while the regions above or below both lines do not include any returns in $\Lambda$ (see Figure~\ref{def:vertical_factor}(a)).

Using notations and terminology from above, we formulate the main result for this section.

\begin{theorem}[Vertical Factorization Theorem]
\label{thm:vertical_factor_thm}
Let $\Lambda$ be a local family of arcs of a Catalan state $C$ which vertically factorizes $C$. Then
\begin{equation}
\label{eqn:vertical_factor_thm}
C(A) = C_{T(\Lambda)}(A) \cdot C_{\Lambda}(A).
\end{equation}
In particular, $C$ is realizable if and only if $C_{\Lambda}$ is realizable.
\end{theorem}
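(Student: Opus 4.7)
The plan is to mirror the proof strategy of the Removable Arc Theorem: invoke a specialized $\Theta_A$-state expansion (the role of the promised Lemma~\ref{lem:theta_state_expn_new}) that keeps the local family $\Lambda$ intact within each expansion term, then peel off the arcs of $\Lambda$ one by one using Theorem~\ref{thm:remove_an_arc}. By condition~(i) of Definition~\ref{def:vertical_factor}, up to a horizontal reflection I may assume $E_\Lambda$ avoids the left boundary. Writing $C = R * F$ with $R$ a top state and $F$ a floor state, chosen so that the ``cut'' between $R$ and $F$ avoids the strip containing $\Lambda$, I would apply Lemma~\ref{lem:theta_state_expn_new} to $(R,\emptyset)$ to produce a decomposition
\[
C(A) \;=\; \sum_{(R', I') \in \mathcal{P}'} Q_{R', I'}(A)\, \Theta_A(R', I'; F),
\]
such that each nonzero Catalan state $R' * F_{I'}$ still contains an intact copy of $\Lambda$ occupying the same strip prescribed by condition~(ii) of Definition~\ref{def:vertical_factor}.

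For each such term, condition~(ii) together with the nested tree structure of $\Lambda$ guarantees that the outermost arc of $\Lambda$ is removable in $R' * F_{I'}$ in the sense of Definition~\ref{def:removable_arc}, so Theorem~\ref{thm:remove_an_arc} applies. Iterating outermost-to-innermost, every arc of $\Lambda$ is peeled off in turn, contributing a power of $A$ at each step. The upshot is
\[
\Theta_A(R', I'; F) \;=\; A^{e(R', I')}\, \Theta_A(R', I'; \widetilde F),
\]
where $\widetilde F$ is obtained from $F$ by replacing $\Lambda$ with parallel arcs. Summing over $(R', I') \in \mathcal{P}'$, the ``non-$\Lambda$'' contributions reassemble into $C_\Lambda(A) = \Theta_A(R,\emptyset;\widetilde F)$ via the parallel expansion for $R * \widetilde F$, and the common $A$-power prefactor is matched with $C_{T(\Lambda)}(A)$. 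This last identification uses Theorem~\ref{thm:coef_no_bot_rtn} applied to $C_{T(\Lambda)}$, whose plucking polynomial decomposes along the same outermost-to-innermost tree order via Theorem~\ref{thm:prod_formula_plucking}, with the index bookkeeping for the $A$-exponents supplied by Corollary~\ref{cor:lr_relations} and Corollary~\ref{cor:b_seq}.

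The main obstacle is Lemma~\ref{lem:theta_state_expn_new} itself: one must produce a $\Theta_A$-state expansion whose terms (a) preserve $\Lambda$, so that iterated removability applies at every step, and (b) whose ``non-$\Lambda$'' parts are precisely the terms of some $\Theta_A$-state expansion for $C_\Lambda$. I expect this to be achieved by running the construction behind Theorem~\ref{thm:main} only on the portion of $R$ outside the strip containing $\Lambda$, so that the strip is left untouched and the two expansions of $C$ and $C_\Lambda$ are in bijection. Once this lemma is secured and the $A$-exponent bookkeeping is carried out, the realizability statement is immediate: $C_{T(\Lambda)}$ is realizable by inspection (its plucking polynomial is manifestly nonzero), so by Theorem~\ref{thm:coef_nonzero} the factorization~\eqref{eqn:vertical_factor_thm} yields $C(A) \neq 0$ if and only if $C_\Lambda(A) \neq 0$.
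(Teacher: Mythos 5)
Your overall architecture---invoke the specialized $\Theta_{A}$-state expansion of Lemma~\ref{lem:theta_state_expn_new} so that each surviving term still contains an intact copy of $\Lambda$, then handle $\Lambda$ inside each term---matches the paper's strategy in outline, and your closing realizability argument is fine. But the mechanism you propose for extracting the factor $C_{T(\Lambda)}(A)$ cannot work. Theorem~\ref{thm:remove_an_arc} trades one arc for a single monomial $A^{b-a}$, so iterating it over the arcs of $\Lambda$ can only ever produce a monomial prefactor $A^{e(R',I')}$, exactly as you write. However, $C_{T(\Lambda)}(A)$ is in general not a monomial (in Example~\ref{ex:vertical_factor_thm} one has $C_{T(\Lambda)}(A)=A^{-2}+A^{2}$, and for larger $T(\Lambda)$ it is a product of Gaussian binomials), so no exponent bookkeeping via Corollary~\ref{cor:lr_relations} or Corollary~\ref{cor:b_seq} can reconcile an accumulated power of $A$ with $C_{T(\Lambda)}(A)$. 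There are two further obstructions to the peeling step itself: when $\Lambda$ contains left or right returns (which happens whenever $E_{\Lambda}$ meets a side boundary and $T(\Lambda)$ branches there), those arcs are not proper arcs, so Definition~\ref{def:removable_arc} and hence Theorem~\ref{thm:remove_an_arc} do not apply to them at all; and even where removal is legal, $C\smallsetminus c$ deletes the arc together with its boundary points, so iterated removal does not produce the state $\widetilde F$ in which $\Lambda$ has been \emph{replaced} by parallel arcs---it produces a smaller state on fewer boundary points.

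What the paper does instead, after applying Lemma~\ref{lem:theta_state_expn_new} to $(F^{*},\emptyset)$ (the rotated floor state, chosen so that the nonzero terms $C'=(R'*(R^{*})_{I'})^{*}$ are Catalan states with no bottom returns still containing $\Lambda$), is to factor the plucking polynomial of each such term directly: $(T(\Lambda),v,\alpha_{0})$ is a splitting subtree in the sense of Definition~\ref{def:splitting_subtree}, so Theorem~\ref{thm:prod_formula_plucking} yields $Q_{q}^{*}(T(C'))=Q_{q}^{*}(T(\Lambda))\cdot Q_{q}^{*}(T(C''))$ with $C''=(C')_{\Lambda'}$ (Lemma~\ref{lem:plucking_poly_C_Lambda}), while Lemma~\ref{lem:beta_relations_C_Lambda} gives $\beta(C')-\beta(C'')=\beta(C_{T(\Lambda)})-\lambda^{2}$; combining the two via Theorem~\ref{thm:coef_no_bot_rtn} produces the genuine polynomial factor $C_{T(\Lambda)}(A)$ term by term. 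The general case is then reduced to this one by induction on $j_{1}(\Lambda,C)$ using the row expansion of Proposition~3.11 of \cite{DW2022}. To repair your argument you must replace the arc-peeling step by this plucking-polynomial factorization (or some equivalent device capable of producing a non-monomial factor); the rest of your outline can then be retained.
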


Our proof of Theorem~\ref{thm:vertical_factor_thm} is based on Lemma~\ref{lem:plucking_poly_C_Lambda}, Lemma~\ref{lem:beta_relations_C_Lambda}, and Lemma~\ref{lem:theta_state_expn_new}. 

\begin{lemma}
\label{lem:plucking_poly_C_Lambda}
Let $\Lambda$ be a local family of arcs of $C \in \mathrm{Cat}(m,n)$, where $C$ has neither right nor bottom returns, such that $E_{\Lambda} \subseteq \{y_{-n+1},y_{-n+2},\ldots,y_{m}\}$. If $\Lambda$ vertically factorizes $C$, then
\begin{equation}
\label{eqn:lem_plucking_poly_C_Lambda}
Q_{q}^{*}(T(C),v_{0},\alpha) = Q_{q}^{*}(T(\Lambda),v,\alpha_{0}) \cdot Q_{q}^{*}(T(C_{\Lambda}),v_{0},\alpha_{\Lambda}),
\end{equation}
where $(T(C),v_{0},\alpha)$ and $(T(C_{\Lambda}),v_{0},\alpha_{\Lambda})$ are the plane rooted trees with delay corresponding to $C$ and $C_{\Lambda}$, respectively, and $(T(\Lambda),v)$ is the dual plane rooted tree for $C - (C \setminus \Lambda)$ with delay $\alpha_{0} \equiv 1$. In particular, $C$ is realizable if and only if $C_{\Lambda}$ is realizable.
\end{lemma}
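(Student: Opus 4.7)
Proof Plan:

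My plan is to apply Theorem~\ref{thm:prod_formula_plucking}, possibly in multiple stages, to the plane rooted tree $(T(C),v_0,\alpha)$. The aim is to identify a splitting subtree of $T(C)$ whose plucking polynomial matches $Q_q^*(T(\Lambda),v,\alpha_0)$ up to normalization, and whose complementary tree is $(T(C_\Lambda),v_0,\alpha_\Lambda)$. Geometrically, the vertical factorization condition means that the arcs of $\Lambda$ lie in a strip of $C$ between the horizontal lines $l^{h}_{j_1}$ and $l^{h}_{j_2}$, with no arc of $C\setminus\Lambda$ entering the strip. Hence in the dual tree $T(C)$ there is a distinguished vertex $v^*$ --- the region of $C$ immediately outside the outermost arcs of $\Lambda$ --- above which lies a subtree $T^*\subseteq T(C)$ spanned by $\Lambda$'s dual edges and isomorphic to $T(\Lambda)$ as a plane rooted tree, with the decomposition $T_{v^*}=T_l\vee T^*\vee T_r$ absorbing any sibling subtrees at $v^*$. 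Replacing $T^*$ inside $T(C)$ by the path of length $|V(T^*)|-1$ from Definition~\ref{def:complementary_tree} corresponds on the Catalan-state side to replacing $\Lambda$ by its parallel-arc substitute in $C$, that is, to passing from $C$ to $C_\Lambda$, so that the complementary tree produced by Theorem~\ref{thm:prod_formula_plucking} is precisely $(T(C_\Lambda),v_0,\alpha_\Lambda)$.

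The verification that $(T^*,v^*,\alpha|_{L(T^*)})$ is actually a splitting subtree is the main obstacle. Leaves of $T^*$ correspond to innermost arcs of $\Lambda$ and have delays at most $j_2+1$. Leaves of $T(C)\setminus T^*$ coming from arcs of $C\setminus\Lambda$ with $j\geq j_2$ have delay $\geq j_2+1$ and pose no difficulty, but leaves with $j\leq j_1$ can have small delay and may violate the inequality of Definition~\ref{def:splitting_subtree}~ii). I would resolve this by a preparatory factorization: apply Theorem~\ref{thm:prod_formula_plucking} first with a splitting subtree consisting of the low-delay leaves of $C\setminus\Lambda$ above $\Lambda$, which yields a common factor appearing in both $Q_q^*(T(C),v_0,\alpha)$ and $Q_q^*(T(C_\Lambda),v_0,\alpha_\Lambda)$, then apply the theorem again on the reduced tree to extract the $T(\Lambda)$-contribution. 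The symmetric case of possible low-delay leaves below $\Lambda$ is handled analogously.

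To identify $Q_q^*(T^*,v^*,\alpha|_{L(T^*)})$ with $Q_q^*(T(\Lambda),v,\alpha_0)$ in the final stage, I note that the two trees share a common plane rooted structure and differ only in their delays. Inducting on $|V(T^*)|$ using the recursion of Definition~\ref{def:q_poly} together with Lemmas~\ref{lem:prod_formula_plucking_1} and~\ref{lem:prod_formula_plucking_2} shows that the discrepancy amounts to an overall multiplicative power of $q$, which the $Q^*$-normalization erases. Combining these steps yields the desired product formula. The ``in particular'' realizability equivalence then follows because $Q_q^*(T(\Lambda),v,\alpha_0)$ is always a nonzero polynomial --- it is the plucking polynomial of a plane rooted tree whose delay is constantly $1$ --- so $Q_q(T(C),v_0,\alpha)\neq 0$ if and only if $Q_q(T(C_\Lambda),v_0,\alpha_\Lambda)\neq 0$, and the realizability criterion (Proposition~3.14 of~\cite{DW2022}, cited in the proof of Lemma~\ref{lem:removable_arc_tree}) finishes the argument.
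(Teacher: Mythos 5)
Your reduction to Theorem~\ref{thm:prod_formula_plucking} breaks down at the step you describe last, and the defect is not erased by the normalization $Q_{q}^{*}$. In $T(C)$ the leaves of the subtree $T^{*}$ spanned by $\Lambda$ are left returns of $C$ with ends $y_{j},y_{j+1}$, $j_{1}\leq j\leq j_{2}$, so they carry the delays $j+1$, which are neither constant nor equal to $1$. A plucking polynomial is \emph{not} insensitive to its delay function up to a power of $q$: already for $\Lambda$ consisting of two unnested left returns $(y_{1},y_{2})$ and $(y_{3},y_{4})$, the tree $T^{*}$ is a wedge of two edges whose leaves inherit delays $2$ and $4$, so $L_{1}(T^{*})=\emptyset$ and $Q_{q}(T^{*},v^{*},\alpha|_{L(T^{*})})=0$, whereas $Q_{q}(T(\Lambda),v,\alpha_{0})=1+q$. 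Your preparatory factorization cannot repair this, because Definition~\ref{def:complementary_tree} keeps $\alpha''(v)=\alpha(v)$ on every surviving leaf, so after the first factorization the leaves of $T^{*}$ still carry the delays $j+1$; and Lemmas~\ref{lem:prod_formula_plucking_1} and~\ref{lem:prod_formula_plucking_2} only track how one fixed delay decrements under plucking --- they do not compare two different delay functions on the same tree. (There is also no guarantee that the ``low-delay leaves of $C\setminus\Lambda$ above $\Lambda$'' form a block $T_{l}\vee T'\vee T_{r}$ over a single vertex, as Definition~\ref{def:splitting_subtree} requires.)

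The paper removes the difficulty geometrically rather than combinatorially: it rotates $C$ and $C_{\Lambda}$ clockwise by $\frac{\pi}{2}$ to get $C'$ and $C''$. Condition ii) of Definition~\ref{def:vertical_factor} forces $j\geq j_{1}\geq 0$ for every return of $\Lambda$, so $\Lambda$ contains no top returns of $C$; hence after rotation every return of $\Lambda$ becomes a top return of $C'$, and every leaf of $T(\Lambda)\subseteq T(C')$ has delay $1$. Then $(T(\Lambda),v,\alpha_{0})$ is honestly a splitting subtree of $(T(C'),v',\alpha')$ --- condition ii) of Definition~\ref{def:splitting_subtree} is automatic since $1\leq\alpha'(u)$ for all $u$ --- and Theorem~\ref{thm:prod_formula_plucking} applies with complementary tree $(T(C''),v'',\alpha'')$. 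One then returns to $T(C)$ and $T(C_{\Lambda})$ via Theorem~\ref{thm:coef_no_bot_rtn}, which gives $Q_{q}^{*}(T(C'),v',\alpha')=Q_{q^{-1}}^{*}(T(C),v_{0},\alpha)$ because $C'(A)=C(A^{-1})$, together with the palindromicity of $Q_{q}(T(\Lambda),v,\alpha_{0})$ as a quotient of products of $q$-integers. To salvage a direct argument on $T(C)$ you would need, at minimum, a genuine generalization of Remark~\ref{rem:prod_formula_plucking2} showing that the specific delays $j+1$ arising here leave $Q_{q}^{*}$ unchanged; that is a substantive claim, not a routine induction.
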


\begin{proof}
Denote by $C'$ and $C''$ the Catalan states obtained from $C$ and $C_{\Lambda}$ by a clockwise $\frac{\pi}{2}$-rotation, respectively. Let $(T(C'),v',\alpha')$ and $(T(C''),v'',\alpha'')$ be the plane rooted trees with delay corresponding to $C'$ and $C''$, respectively. By our assumption on $\Lambda$, the tree $T(\Lambda)$ is a subtree of $T(C')$ and $\alpha'(u) = 1$ for all leaves $u$ of $T(\Lambda)$. Since $(T(\Lambda),v,\alpha_{0})$ is a splitting subtree of $(T(C'),v',\alpha')$ and, as one may check, $(T(C''),v'',\alpha'')$ is its complementary tree, it follows by Theorem~\ref{thm:prod_formula_plucking} that
\begin{equation}
\label{eqn:pf_lem_plucking_poly_C_Lambda_1}
Q_{q}(T(C'),v',\alpha') = Q_{q}(T(\Lambda),v,\alpha_{0}) \cdot Q_{q}(T(C''),v'',\alpha'').
\end{equation}
Moreover, as $C'$ is a clockwise $\frac{\pi}{2}$-rotation of $C$,
\begin{equation*}
C'(A) = C(A^{-1})    
\end{equation*}
and consequently $C'$ is realizable if and only if $C$ is realizable by Theorem~\ref{thm:coef_nonzero}. If both $C'$ and $C$ are non-realizable then, using Proposition~3.14 of \cite{DW2022}, we see that
\begin{equation*}
Q_{q}(T(C'),v',\alpha') = 0 = Q_{q}(T(C),v_{0},\alpha),
\end{equation*}
so consequently
\begin{equation}
\label{eqn:pf_lem_plucking_poly_C_Lambda_2}
Q_{q}^{*}(T(C'),v',\alpha') = Q_{q^{-1}}^{*}(T(C),v_{0},\alpha),
\end{equation}
where $Q_{q^{-1}}^{*}$ stands for $(Q_{q^{-1}})^{*}$. If both $C'$ and $C$ are realizable, since
\begin{equation*}
\min\deg_{q} Q_{q}^{*}(T(C'),v',\alpha') = 0 = \min\deg_{q} Q_{q^{-1}}^{*}(T(C),v_{0},\alpha),
\end{equation*} 
it follows from Theorem~\ref{thm:coef_no_bot_rtn} that \eqref{eqn:pf_lem_plucking_poly_C_Lambda_2} also holds. Analogous arguments used for $C''$ and $C_{\Lambda}$ yield
\begin{equation}
\label{eqn:pf_lem_plucking_poly_C_Lambda_3}
Q_{q}^{*}(T(C''),v'',\alpha'') = Q_{q^{-1}}^{*}(T(C_{\Lambda}),v_{0},\alpha_{\Lambda}).
\end{equation}
Finally, by Corollary~2.3(ii) of \cite{Prz2016}, $Q_{q}(T(\Lambda),v,\alpha_{0})$ is a quotient of products of $q$-integers\footnote{A $q$-integer (or $q$-analog of an integer) $[n]_{q}$ is defined by $[n]_{q} = 1 + q +\cdots + q^{n-1}$ for $n \geq 1$.}, so
\begin{equation}
\label{eqn:pf_lem_plucking_poly_C_Lambda_4}
Q_{q}^{*}(T(\Lambda),v,\alpha_{0}) = Q_{q^{-1}}^{*}(T(\Lambda),v,\alpha_{0}).
\end{equation}
Therefore, equation \eqref{eqn:lem_plucking_poly_C_Lambda} follows from \eqref{eqn:pf_lem_plucking_poly_C_Lambda_1}-\eqref{eqn:pf_lem_plucking_poly_C_Lambda_4}.

Since $Q_{q}^{*}(T(\Lambda),v,\alpha_{0}) \neq 0$, it follows that $Q_{q}(T(C),v_{0},\alpha) \neq 0$ if and only if $Q_{q}(T(C_{\Lambda}),v_{0},\alpha_{\Lambda}) \neq 0$. Consequently, by Proposition~3.14 of \cite{DW2022}, $C$ is realizable if and only if $C_{\Lambda}$ is realizable.
\end{proof}

\begin{lemma}
\label{lem:beta_relations_C_Lambda}
Let $C \in \mathrm{Cat}(m,n)$ be a realizable Catalan state with no bottom returns and let $\Lambda$ be a local family of arcs of $C$ such that $E_{\Lambda} \subseteq \{y_{-n+1},y_{-n+2},\ldots,y_{m}\}$. If $C_{\Lambda}$ is realizable, then
\begin{equation*}
\beta(C)-\beta(C_{\Lambda}) = \beta(C_{T(\Lambda)}) - \lambda^{2},
\end{equation*}
where $\lambda$ is the number of elements of $\Lambda$.
\end{lemma}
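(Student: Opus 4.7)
The plan is to apply Corollary~\ref{cor:lr_relations} to each of $\beta(C)$, $\beta(C_\Lambda)$, and $\beta(C_{T(\Lambda)})$ and compare. Since $C$ and $C_\Lambda$ agree outside $\Lambda$ (with the arcs of $\Lambda$ in $C$ replaced by a family $\Lambda'$ of $\lambda$ parallel arcs with the same set of ends $E_\Lambda$ in $C_\Lambda$), Corollary~\ref{cor:lr_relations} gives
\begin{equation*}
\beta(C) - \beta(C_\Lambda) \;=\; \sum_{(p,q) \in \Lambda} \min\{\iota(p),\,1-\iota(q)\} \;-\; \sum_{(p,q) \in \Lambda'} \min\{\iota(p),\,1-\iota(q)\},
\end{equation*}
with $\iota = \iota_{m,n}$. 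Because $E_\Lambda \subseteq \{y_{-n+1},\ldots,y_m\}$, every end lies on the left or top boundary, so $\iota(e) \leq 0$ for $e \in E_\Lambda$, and therefore each $\min$ above equals $\iota(p)$.

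Order $E_\Lambda = \{e_1,\ldots,e_{2\lambda}\}$ so that $\iota(e_1) < \cdots < \iota(e_{2\lambda})$. A direct inspection of the definition of $\iota$ shows that consecutive boundary points along the path traversing the left boundary upward and then the top boundary rightward have $\iota$-values differing by exactly $1$; hence $\iota(e_i) = \iota(e_1) + (i-1)$. Let $L \subseteq \{1,\ldots,2\lambda\}$ record those indices $i$ for which $e_i$ is the left end of an arc in $\Lambda$, so $|L| = \lambda$. The parallel arcs $\Lambda'$ are necessarily nested with left ends $e_1,\ldots,e_\lambda$, so after cancelling the common constant $\iota(e_1)$ in each summand one obtains
\begin{equation*}
\beta(C) - \beta(C_\Lambda) \;=\; \sum_{i \in L}(i-1) \;-\; \sum_{i=1}^{\lambda}(i-1) \;=\; \sum_{i \in L} i \;-\; \lambda \;-\; \binom{\lambda}{2}.
\end{equation*}

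For $C_{T(\Lambda)} \in \mathrm{Cat}(\lambda,2\lambda)$, the defining property that every bottom-boundary point is joined to a left- or right-boundary point uses up all $2\lambda$ bottom points and all $2\lambda$ left/right points, forcing the remaining $\lambda$ arcs to form a non-crossing matching of all $2\lambda$ top-boundary points; these are the only arcs in $S(C_{T(\Lambda)})$. The equality $T(C_{T(\Lambda)}) = T(\Lambda)$ of plane rooted trees identifies, via the standard bijection between non-crossing matchings and plane rooted trees, the matching of these top returns with the matching $\Lambda$ on $E_\Lambda$; in particular, under the natural left-to-right identification $e_i \leftrightarrow x_i$, the set $L$ is also the set of left-end indices of the top returns of $C_{T(\Lambda)}$. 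For a top return $(x_a,x_b)$ with $1 \le a < b \le 2\lambda$ one has $\iota_{\lambda,2\lambda}(x_a) + \iota_{\lambda,2\lambda}(x_b) = a+b-4\lambda \le -1 < 1$, so the $\min$ again collapses to the left $\iota$-value, and Corollary~\ref{cor:lr_relations} yields
\begin{equation*}
\beta(C_{T(\Lambda)}) \;=\; 2\lambda^2 + \binom{\lambda}{2} + \sum_{i \in L}(i - 2\lambda) \;=\; \binom{\lambda}{2} + \sum_{i \in L} i.
\end{equation*}
Subtracting $\lambda^2$ and comparing with the earlier expression reduces the claimed identity to $-\lambda - \binom{\lambda}{2} = \binom{\lambda}{2} - \lambda^2$, which holds because $2\binom{\lambda}{2} = \lambda^2 - \lambda$.

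The main obstacle is the compatibility of left-end indices: one must check carefully that the equality $T(C_{T(\Lambda)}) = T(\Lambda)$ as rooted plane trees, together with the standard bijection between non-crossing matchings on $2\lambda$ consecutive boundary points and plane rooted trees with $\lambda$ edges, makes $L$ the common set of left-end indices under the identification $e_i \leftrightarrow x_i$. Once this is recognized, the remainder is a bookkeeping computation built from Corollary~\ref{cor:lr_relations} and the elementary observation $\iota(e_{i+1})-\iota(e_i)=1$.
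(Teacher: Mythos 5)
Your proof is correct and follows essentially the same route as the paper's: both apply Corollary~\ref{cor:lr_relations}, observe that the minimum collapses to $\iota(p)$ because all ends of $\Lambda$ lie on the left or top boundary, and rest on the same key identification of the left-end positions of $\Lambda$ (relative to the first point of $E_{\Lambda}$) with those of the top returns of $C_{T(\Lambda)}$, which the paper likewise asserts without detailed proof. The only cosmetic difference is that you evaluate $\beta(C_{T(\Lambda)})$ in closed form as $\binom{\lambda}{2}+\sum_{i\in L} i$, whereas the paper compares $\beta(C_{T(\Lambda)})$ with $\beta(C_{T(\Lambda')})=\lambda^{2}$ for the parallel family $\Lambda'$.
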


\begin{proof}
Let $\Lambda'$ be the local family of arcs of $C_{\Lambda}$ with $E_{\Lambda'} = E_{\Lambda}$. There is a unique $1-n \leq j \leq m-1$ such that $(y_{j+1},y_{j}) \in \Lambda'$. Let $p_{0} = y_{j+1}$, then by Corollary~\ref{cor:lr_relations},
\begin{equation*}
\beta(C)-\beta(C_{\Lambda}) = \sum_{(p,q) \in \Lambda} \iota_{m,n}(p) - \sum_{(p,q) \in \Lambda'} \iota_{m,n}(p) = \sum_{(p,q) \in \Lambda} \iota_{m,n}(p) - \frac{\lambda}{2}(2\iota_{m,n}(p_{0})-\lambda+1).
\end{equation*}
By Corollary~\ref{cor:lr_relations} and definitions of $C_{T(\Lambda)}$ and $C_{T(\Lambda')}$,
\begin{equation*}
\beta(C_{T(\Lambda)}) - \beta(C_{T(\Lambda')}) = \sum_{(p,q) \in \Lambda''} \iota_{\lambda,2\lambda}(p) - \sum_{(p,q) \in \Lambda'''} \iota_{\lambda,2\lambda}(p) = \sum_{(p,q) \in \Lambda''} \iota_{\lambda,2\lambda}(p) - \frac{\lambda}{2}(2\iota_{\lambda,2\lambda}(p'_{0})-\lambda+1),
\end{equation*}
where $\Lambda''$ and $\Lambda'''$ are the sets of top returns of $C_{T(\Lambda)}$ and $C_{T(\Lambda')}$, respectively, and $p'_{0} = x_{\lambda}$ in $C_{T(\Lambda')}$. Since
\begin{equation*}
\sum_{(p,q) \in \Lambda} (\iota_{m,n}(p)-\iota_{m,n}(p_{0})) = \sum_{(p,q) \in \Lambda''} (\iota_{\lambda,2\lambda}(p)-\iota_{\lambda,2\lambda}(p'_{0})),
\end{equation*}
it follows that
\begin{equation*}
\beta(C)-\beta(C_{\Lambda}) = \beta(C_{T(\Lambda)}) - \beta(C_{T(\Lambda')}) = \beta(C_{T(\Lambda)}) - \lambda^{2}.
\end{equation*}
\end{proof}

For non-negative integers $n,k$, let $\mathcal{R}_{n,k}$ be the family of all roof states $R$ with $n_{t}(R) = n$, $n_{b}(R) = n' \geq k$, and $\mathrm{ht}(R) = m \geq \max\{k-n,0\}$ such that, each pair of boundary points $x'_{n'-i}$ and $y'_{m-i}$ is joined by an arc for $i = 0,1,2,\ldots,k-1$. (See Figure~\ref{fig:R_nk})

\begin{figure}[ht] 
\centering
\includegraphics[scale=1]{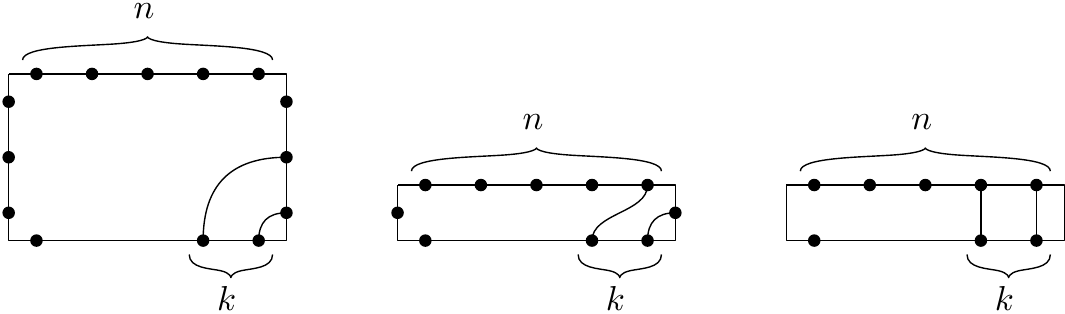}
\caption{Examples of roof states in $\mathcal{R}_{n,k}$}
\label{fig:R_nk}
\end{figure}

In Lemma~\ref{lem:theta_state_expn_new}, we prove that any pair $(R,I)$ with $R \in \mathcal{R}_{n,k}$ and $I \in \mathrm{Fin}(\mathbb{N})$ has a $\Theta_{A}$-state expansions satisfying additional properties. We note that this result is a stronger version of Theorem~\ref{thm:main}. In particular, a $\Theta_{A}$-state expansion resulting from the proof of Theorem~4.7 in \cite{DW2022} may not have such properties.

\begin{figure}[ht] 
\centering
\includegraphics[scale=1]{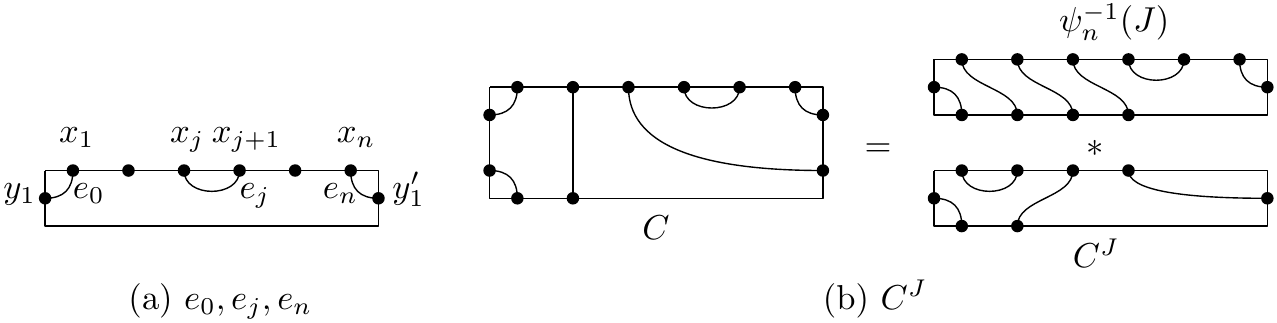}
\caption{Arcs $e_{0},e_{j},e_{n}$, roof state $R$, and $R^{J}$ for $J = \{4,6\}$}
\label{fig:R_J}
\end{figure}

Let $e_{j}$, $j = 0,1,\ldots,n$, be arcs shown in Figure~\ref{fig:R_J}(a) and, as in \cite{DW2022}, for a crossingless connection $C$ we define
\begin{equation*}
\mathcal{J}(C) = \{j \mid e_{j} \in C\}.
\end{equation*}
Let $R$ be a roof state for which there is a bottom state $B$, such that $R*B$ is a Catalan state of $L(1,n)$. Denote by $\mathcal{R}_{n}$ the set of all such roof states $R$. Define a map $\psi_{n}$ from $\mathcal{R}_{n}$ to the set of all subsets of $\{0,1,\ldots,n\}$ by 
\begin{equation*}
\psi_{n}(R) = \mathcal{J}(R).
\end{equation*}
By Lemma~3.8 of \cite{DW2022}, $\psi_{n}$ is a bijection between $\mathcal{R}_{n}$ and the set $\mathcal{U}_{n}$ consisting of all $J = \{j_{1},j_{2},\ldots,j_{t+1}\} \subseteq \{0,1,\ldots,n\}$ with $t \geq 0$, such that $j_{k+1}-j_{k} > 1$ for $k = 1,2,\ldots,t$. Given a crossingless connection $C$ with $n_{t}(C) = n$, $\mathrm{ht}(C) \geq 1$, and $J \in \mathcal{U}_{n}$, define $C^{J} = C'$ if there is a crossingless connection $C'$ such that
\begin{equation*}
C = \psi_{n}^{-1}(J) * C'    
\end{equation*}
and we set $C^{J} = K_{0}$ otherwise (see Figure~\ref{fig:R_J}(b)).

\begin{lemma}
\label{lem:theta_state_expn_new}
For each $(R,I) \in \mathcal{R}_{n,k} \times \mathrm{Fin}(\mathbb{N})$, there is a $\Theta_{A}$-state expansion 
\begin{equation}
\label{eqn:Theta_state_expansion}
\Theta_{A}(R,I;\cdot) = \sum_{(R',I') \in \mathcal{P}'} Q_{R',I'}(A) \, \Theta_{A}(R',I' \oplus I;\cdot)
\end{equation}
such that, each $(R',I') \in \mathcal{P}'$ satisfies the following additional properties
\begin{enumerate}
\item[i)] $R' \in \mathcal{R}_{n+2|I|-2|I'|,k}$,
\item[ii)] $\mathrm{ht}(R') \geq k$, and
\item[iii)] $R'$ has no left returns and no arcs joining left- and bottom-boundary points.
\end{enumerate}
\end{lemma}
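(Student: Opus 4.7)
The plan is to prove the lemma by induction, strengthening the construction behind Theorem~\ref{thm:main} so that the $\Theta_{A}$-state expansion simultaneously preserves the ``staircase'' structure of $\mathcal{R}_{n,k}$ and eliminates left returns and arcs joining left- and bottom-boundary points. I would induct on a lexicographic complexity $(t(R),\ell(R),b(R))$, where $t(R)$ is the number of top returns of $R$, $\ell(R)$ the number of left returns of $R$, and $b(R)$ the number of arcs of $R$ joining a left-boundary point to a bottom-boundary point; note that the $k$ distinguished arcs forced by $R\in\mathcal{R}_{n,k}$ never contribute to this count.

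Base case: if $(t(R),\ell(R),b(R))=(0,0,0)$ then $R$ is a middle state that already has no left returns and no left-to-bottom arcs, and $R\in\mathcal{R}_{n,k}$ immediately gives $\mathrm{ht}(R)\geq k$. Then the trivial expansion $\mathcal{P}'=\{(R,\emptyset)\}$ with $Q_{R,\emptyset}(A)=1$ satisfies (i), (ii), (iii).

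Inductive step: if $t(R)>0$, I pick an innermost top return of $R$ and apply the local skein reduction used in the proof of Theorem~4.7 of \cite{DW2022}. The crucial geometric point is that the $k$ right-most bottom- and right-boundary points of $R$ are tied into the staircase of $\mathcal{R}_{n,k}$, so no innermost top return can meet any of those points; the reduction therefore takes place strictly to the left of the staircase and leaves it intact. Each resulting pair $(\tilde R,\tilde I)$ lies in $\mathcal{R}_{n+2|I|-2|I'|,k}\times\mathrm{Fin}(\mathbb{N})$ for the appropriate $|I'|$, has $\mathrm{ht}(\tilde R)\geq k$, and has strictly smaller complexity, so the inductive hypothesis applies and composes to the desired expansion. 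If $t(R)=0$ but $\ell(R)+b(R)>0$, I would perform an analogous local reduction around the left-most offending arc (a left return or a left-to-bottom arc): decompose the corresponding Catalan state sum via a Kauffman-type identity combined with the bottom-return operator $(\cdot)_{I}$, so that the output is again a sum of $(\tilde R,\tilde I)$ with $\tilde R$ a middle state in $\mathcal{R}_{n',k}$, strictly simpler in the lexicographic order, and then invoke induction.

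The hard part is precisely this last case. Theorem~4.7 of \cite{DW2022} is built around smoothings of crossings of $L(m,n)$, which naturally dispose of top returns but say nothing about left returns or left-to-bottom arcs; so the missing piece is an honest $\Theta_{A}$-state identity, satisfying Definition~\ref{def:Theta_state_expansion}, that removes one such arc while outputting only middle states in $\mathcal{R}_{n',k}$. Extracting this identity will likely require combining the plucking-polynomial factorization of Theorem~\ref{thm:prod_formula_plucking} with a careful analysis of the operation $\oplus$: one shifts the offending arc toward the top boundary using the structure theorems of Section~\ref{s:pre}, then applies the top-return reduction already available, and finally verifies that the $I'\oplus I$ bookkeeping is consistent and that the staircase of $k$ arcs is still untouched. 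Making these local moves explicit and checking closure inside $\mathcal{R}_{n',k}$ is where the bulk of the technical work lies.
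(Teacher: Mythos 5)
Your proposal does not constitute a proof: it explicitly defers the one step that is the actual content of the lemma. The whole point of Lemma~\ref{lem:theta_state_expn_new}, as opposed to Theorem~\ref{thm:main} (Theorem~4.7 of \cite{DW2022}), is to produce an expansion whose output states satisfy iii) (no left returns, no left-to-bottom arcs) while staying inside $\mathcal{R}_{n',k}$. You acknowledge that the reductions of \cite{DW2022} ``say nothing about left returns or left-to-bottom arcs'' and that ``the missing piece is an honest $\Theta_{A}$-state identity'' removing such an arc; you then gesture at combining Theorem~\ref{thm:prod_formula_plucking} with an analysis of $\oplus$ without exhibiting the identity. That is precisely where the paper does its work: it runs a double induction on $n$ and $k$, and in the case $\mathrm{ht}(R)=0$ it introduces the parameters $j(R)$, $u(R)$, $v(R)$ and splits into the three subcases $u=j,v=0$; $u=j,v>0$; $u<j$, using Lemma~4.3 and Propositions~3.7 and~3.11 of \cite{DW2022} to build explicit expansions (via the states $R_{n,j}$, $R'_{n,j,l}$, and the towers $\tilde{R}_{i+1}=\psi_{n}^{-1}(\{0\})*\tilde{R}_{i}$) that eliminate the offending arcs. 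None of this is recoverable from your sketch.

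Two further concrete problems. First, your induction measure $(t(R),\ell(R),b(R))$ is not shown to decrease under the reductions actually available: Proposition~3.11 of \cite{DW2022} peels off an entire row of the roof state and can create new top returns and new left-boundary incidences in the output states, so ``strictly smaller complexity'' is an unjustified claim; the paper instead inducts on $n$ and $k$ with auxiliary inductions on $v$ and on $\mathrm{ht}(R)$, which is what makes the recursion terminate. Second, your base case asserts that $R\in\mathcal{R}_{n,k}$ ``immediately gives $\mathrm{ht}(R)\geq k$''; this is false, since Definition of $\mathcal{R}_{n,k}$ only requires $\mathrm{ht}(R)\geq\max\{k-n,0\}$ (the staircase arcs $x'_{n'-i}$--$y'_{m-i}$ may terminate on the top boundary under the relabeling $y'_{i-n}=x_{i}$), which is exactly why ii) is listed as an additional property to be proved rather than a consequence of i).
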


\begin{proof}
Our proof is by induction on $n$ and on $k$. For convenience, we will say that $(R,I) \in \mathcal{R}_{n,k} \times \mathrm{Fin}(\mathbb{N})$ has property $\mathrm{P}(n,k)$ if there is a $\Theta_{A}$-state expansion for $(R,I)$ with each pair $(R',I') \in \mathcal{P}'$ having properties i)-iii).

For $n = 0,1$ and any $k \geq 0$, let $R \in \mathcal{R}_{n,k}$ and $I \in \mathrm{Fin}(\mathbb{N})$. If $n_{b}(R) > n$ then by Remark~4.9 of \cite{DW2022}, $\Theta_{A}(R,I;\cdot) \equiv 0$. Taking for $\mathcal{P}' = \emptyset$ in \eqref{eqn:Theta_state_expansion}, we see that $(R,I)$ has property $\mathrm{P}(n,k)$. For $n_{b}(R) \leq n$, since $n+n_{b}(R)$ is even, it must be $n_{b}(R) = n$, i.e., $R$ is a Catalan state. If $R$ is not realizable, then clearly $(R,I)$ has property $\mathrm{P}(n,k)$ as we may take $\mathcal{P'} = \emptyset$ in \eqref{eqn:Theta_state_expansion}. If $R$ is realizable, then $R(A) \neq 0$ by Theorem~\ref{thm:coef_nonzero}, so we may take in \eqref{eqn:Theta_state_expansion}
\begin{equation*}
\mathcal{P}' = \{(C_{0},I)\} \quad \text{and} \quad Q_{C_{0},I}(A) = \frac{R(A)}{C_{0}(A)},
\end{equation*}
where $C_{0} \in \mathrm{Cat}(n,n)$ is the unique Catalan state with coefficient $C_{0}(A) = A^{-n^{2}}$. Hence, $(R,I)$ also has property $\mathrm{P}(n,k)$ in this case.

Let $n_{0} > 1$ and assume that for all $n < n_{0}$ and $k \geq 0$ each pair $(R,I)$ with $R \in \mathcal{R}_{n,k}$ and $I \in \mathrm{Fin}(\mathbb{N})$ has property $\mathrm{P}(n,k)$.
When $n = n_{0}$, $k = 0$, let $R \in \mathcal{R}_{n,0}$. If $n_{b}(R) > n$ then, as we argued above, each pair $(R,I)$ has property $\mathrm{P}(n,0)$. So we assume that $n_{b}(R) \leq n$ and consider the following cases for $R$:
\begin{enumerate}
\item[a)] $R = L(0,n)$,
\item[b)] $\mathrm{ht}(R) = 0$ and $R \neq L(0,n)$, or
\item[c)] $\mathrm{ht}(R) \geq 1$.
\end{enumerate}

For a), one chooses in \eqref{eqn:Theta_state_expansion} \begin{equation*}
\mathcal{P}' = \{(C_{0},I)\} \quad \text{and} \quad Q_{C_{0},I}(A) = \frac{1}{C_{0}(A)},
\end{equation*}
where $C_{0} \in \mathrm{Cat}(n,n)$ is defined as above. Clearly $(R,I)$ has property $\mathrm{P}(n,0)$. 

For b), let $\overline{R}$ be the reflection of $R$ about a vertical line and define 
\begin{eqnarray*}
j(R) &=& \min \mathcal{J}(R),\\ 
u(R) &=& |\{i \in \varphi_{n}(\overline{R}^{*}) \mid i \leq j(R)\}|,\ \text{and}\\ 
v(R) &=& n - \min (\mathcal{J}(R) \cup \{n\} \setminus \{j(R)\}) \geq 0.
\end{eqnarray*}
The geometric interpretation of $j = j(R)$, $u = u(R)$, and $v = v(R)$ is given in Figure~\ref{fig:pf_theta_state_expn_new}. 

\begin{figure}[htb]
\centering
\includegraphics[scale=1]{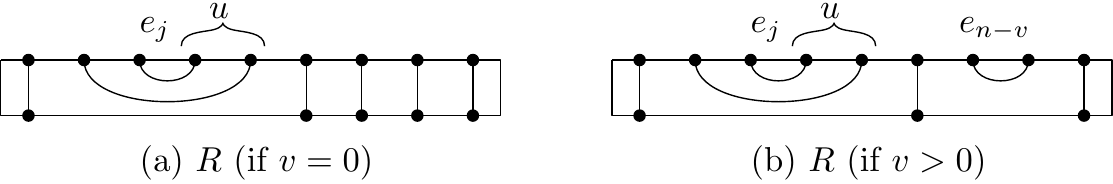}
\caption{$j = j(R)$, $u = u(R)$, and $v = v(R)$ for top state $R \neq L(0,n)$}
\label{fig:pf_theta_state_expn_new}
\end{figure}

We consider the following cases
\begin{enumerate}
\item[1)] $u = j$ and $v = 0$,
\item[2)] $u = j$ and $v > 0$, or
\item[3)] $u < j$ and $v \geq 0$.
\end{enumerate} 

For the case 1), we use Lemma~4.3 of \cite{DW2022} and Proposition~3.7(i) of \cite{DW2022} to see that
\begin{equation*}
\Theta_{A}(R,I;\cdot) = Q(A) \, \Theta_{A}(R_{n,j},I;\cdot) - \sum_{(R',I') \in \tilde{\mathcal{P}}'} Q_{R',I'}(A) \, \Theta_{A}(R',I' \oplus I;\cdot)
\end{equation*}
for some non-zero coefficients $Q(A),Q_{R',I'}(A) \in \mathbb{Q}(A)$, where 
\begin{equation*}
\tilde{\mathcal{P}}' = \{(R'_{n,j,|J|},J) \mid J \in \mathcal{L}_{n}, \ 0 < |J| \leq j\}
\end{equation*}
and roof states $R_{n,j},R'_{n,j,l}$ are shown in Figure~\ref{fig:pf_theta_state_expn_new_2}. Clearly, $(R_{n,j},I)$ satisfies properties i)-iii). Since $R' = R'_{n,j,|I'|} \in \mathcal{R}_{n-2|I'|,0}$ with $|I'| > 0$ in each pair $(R',I') \in \tilde{\mathcal{P}}'$, the induction hypothesis on $n$ applies to each pair $(R',I' \oplus I)$, i.e., $(R',I' \oplus I)$ has property $\mathrm{P}(n-2|I'|,0)$. Consequently, $(R,I)$ with $u = j$ and $v = 0$ has property $\mathrm{P}(n,0)$. 

\begin{figure}[htb]
\centering
\includegraphics[scale=1]{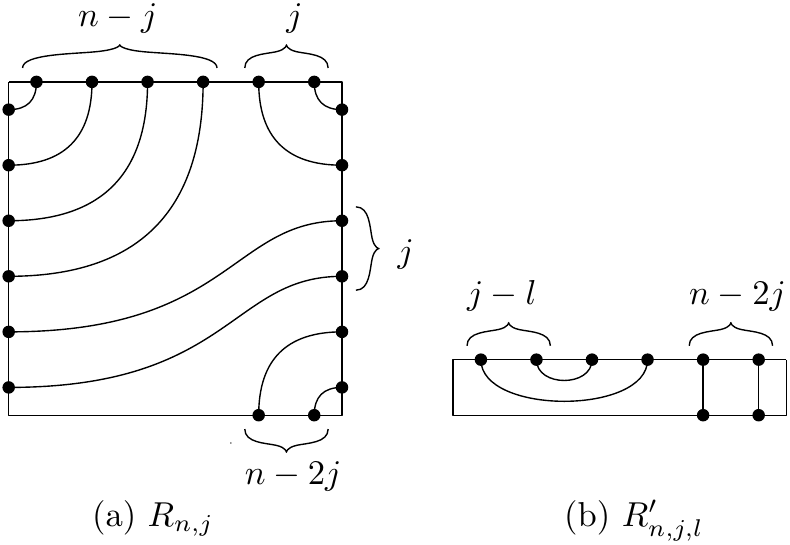}
\caption{Roof states $R_{n,j}$ and $R'_{n,j,l}$}
\label{fig:pf_theta_state_expn_new_2}
\end{figure}

For the case 2), let $\tilde{R} = \psi_{n}(\{j\}) * R$ and then, by Proposition~3.11 of \cite{DW2022},
\begin{equation*}
\Theta_{A}(R,I;\cdot) = A^{n-2j} \, \Theta_{A}(\tilde{R},I;\cdot) - \sum_{(R',I') \in \mathcal{P}'_{0}} Q_{R',I'}(A) \, \Theta_{A}(R',I' \oplus I;\cdot)
\end{equation*}
for some family $\mathcal{P}'_{0} \subset \mathcal{W}$. As one may check, $R'$ in each pair $(R',I') \in \mathcal{P}'_{0}$ is a top state with $R' \in \mathcal{R}_{n-2|I'|,0}$ and either $n_{t}(R') < n$ or $n_{t}(R') = n$ with $u(R') = j(R')$ and $v(R') < v$. Therefore, either induction on $n$ or induction on $v$ (for $R' \in \mathcal{R}_{n,0}$ with $\mathrm{ht}(R') = 0$, $u(R') = j(R')$, and using 1) as the base case for this induction) applies for each pair $(R',I' \oplus I)$, where $(R',I') \in \mathcal{P}'_{0}$. Consequently, each such a pair has property $\mathrm{P}(n-2|I'|,0)$. Moreover, if we express $\tilde{R}$ as a vertical product of a top state $\tilde{R}'$ and a middle state $\tilde{M}$, i.e., $\tilde{R} = \tilde{R}' * \tilde{M}$, then $\tilde{R}' \in \mathcal{R}_{n,0}$ and $\tilde{M} \in \mathcal{R}_{n',0}$, where $n' = n_{b}(\tilde{R}')$. Furthermore, as one may check,
\begin{equation*}
u(\tilde{R}') = j(\tilde{R}') = j \ \text{and} \ v(\tilde{R}') = v-1.
\end{equation*}
Using induction on $v$, the pair $(\tilde{R}',I)$ has property $\mathrm{P}(n,0)$, i.e., $(\tilde{R}',I)$ has a $\Theta_{A}$-state expansion with a family $\mathcal{P}'_{1}$ of pairs $(R',I')$ satisfying properties i)-iii). Consequently, by Proposition~3.7(ii) of \cite{DW2022}, we see that $(\tilde{R},I)$ has a $\Theta_{A}$-state expansion with the family
\begin{equation*}
\tilde{\mathcal{P}}'_{1} = \{(R' * \tilde{M},I') \mid (R',I') \in \mathcal{P}'_{1}\}    
\end{equation*}
for which, as one may verify, each pair $(R' * \tilde{M},I') \in \tilde{\mathcal{P}}'_{1}$ satisfies properties i)-iii), i.e., $(\tilde{R},I)$ has property $\mathrm{P}(n,0)$. Hence, we can conclude that $(R,I)$ with $u = j$ and $v > 0$ has property $\mathrm{P}(n,0)$.

For the case 3), let $\tilde{R}_{0} = R$ and
\begin{equation*}
\tilde{R}_{i+1} = \psi_{n}^{-1}(\{0\}) * \tilde{R}_{i}
\end{equation*}
for $i = 0,1,\ldots,n-1$. By Proposition~3.11 of \cite{DW2022}, there are corresponding families $\mathcal{P}'_{i} \subset \mathcal{W}$, $i = 0,1,\ldots,n-1$, such that
\begin{equation}
\label{eqn:pf_theta_state_expn_new}
\Theta_{A}(\tilde{R}_{i},I;\cdot) = A^{n} \, \Theta_{A}(\tilde{R}_{i+1},I;\cdot) - \sum_{(R',I') \in \mathcal{P}'_{i}} Q_{R',I'}(A) \, \Theta_{A}(R',I' \oplus I;\cdot)
\end{equation}
with $R' \in \mathcal{R}_{n-2|I'|,0}$ and $\mathrm{ht}(R') = i$ for each $R'$ in the corresponding pair $(R',I') \in \mathcal{P}'_{i}$. If $|I'| = 0$ in such a pair $(R',I')$ then, after representing $R' = \tilde{R}' * \tilde{M}$ as a vertical product of a top state $\tilde{R}'$ and a middle state $\tilde{M}$, we see that $\tilde{R}' \in \mathcal{R}_{n,0}$ and $\tilde{M} \in \mathcal{R}_{n',0}$, where $n' = n_{b}(\tilde{R}')$. Furthermore, as one may verify,
\begin{equation*}
u(\tilde{R}') = j(\tilde{R}') = 1 \ \text{and} \ v(\tilde{R}') \geq 0.
\end{equation*}
Hence, using either 1) or 2) for $\tilde{R}'$ and, similar arguments as the above for $\tilde{R}' * \tilde{M}$, one shows that each $(R',I' \oplus I)$ with $(R',I') \in \mathcal{P}'_{i}$ and $|I'| = 0$ has property $\mathrm{P}(n-2|I'|,0)$. If $|I'| > 0$ in a pair $(R',I') \in \mathcal{P}'_{i}$ then, using induction on $n$, we see that $(R',I' \oplus I)$ also has property $\mathrm{P}(n-2|I'|,0)$. Moreover, after back-substituting $\Theta_{A}(\tilde{R}_{i},I;\cdot)$ in \eqref{eqn:pf_theta_state_expn_new} for $i = 1,2,\ldots,n-1$, it could easily be seen that $\Theta_{A}(R,I;\cdot)$ is given by
\begin{equation*}
\Theta_{A}(R,I;\cdot) = A^{n^{2}} \, \Theta_{A}(\tilde{R}_{n},I;\cdot) - \sum_{(R',I') \in \tilde{\mathcal{P}}'} Q_{R',I'}(A) \, \Theta_{A}(R',I' \oplus I;\cdot),
\end{equation*}
where 
\begin{equation*}
\tilde{\mathcal{P}}' = \mathcal{P}'_{0} \cup \mathcal{P}'_{1} \cup \cdots \cup \mathcal{P}'_{n-1},    
\end{equation*}
and as one may verify, $(\tilde{R}_{n},I)$ satisfies properties i)-iii). Consequently, $(R,I)$ with $u < j$ and $v \geq 0$ has property $\mathrm{P}(n,0)$. This also concludes b).

For c), since $h = \mathrm{ht}(R) \geq 1$, by Proposition~3.11 of \cite{DW2022},
\begin{equation*}
\Theta_{A}(R,I;\cdot) = \sum_{(R',I') \in \tilde{\mathcal{P}}'} Q_{R',I'}(A) \, \Theta_{A}(R',I' \oplus I;\cdot),
\end{equation*}
for some $\tilde{\mathcal{P}}' \subset \mathcal{W}$, where $R' \in \mathcal{R}_{n-2|I'|,0}$ and $\mathrm{ht}(R') = h-1$ for $R'$ in each corresponding pair $(R',I') \in \tilde{\mathcal{P}}'$. Hence, by induction on $n$ if $|I'| > 0$ or by induction on $h$ if $|I'| = 0$ (for $R' \in \mathcal{R}_{n,0}$, and using a) and b) as the base case when $\mathrm{ht}(R') = 0$), we see that $(R',I' \oplus I)$ has property $\mathrm{P}(n-2|I'|,0)$ for any $(R',I') \in \tilde{\mathcal{P}}'$. It follows that $(R,I)$ with $\mathrm{ht}(R) \geq 1$ has property $\mathrm{P}(n,0)$. This concludes the case when $n = n_{0}$ and $k = 0$.

Assume that $k > 0$ and let $R \in \mathcal{R}_{n,k}$. If $n_{b}(R) > n$ then, as we argued above, each pair $(R,I)$ has property $\mathrm{P}(n,k)$. So we assume that $n_{b}(R) \leq n$ and consider the following cases for $R$:
\begin{enumerate}
\item[a\textquotesingle)] $R = L(0,n)$,
\item[b\textquotesingle)] $\mathrm{ht}(R) = 0$ and $R \neq L(0,n)$, or
\item[c\textquotesingle)] $\mathrm{ht}(R) \geq 1$.
\end{enumerate}
As one may see, a\textquotesingle) and c\textquotesingle) can be argued using the same arguments as cases a) and c). For b\textquotesingle), we consider the following cases:
\begin{enumerate}
\item[1\textquotesingle)] $u = j$ and $v = 0$,
\item[2\textquotesingle)] $u = j$ and $v > 0$, or
\item[3\textquotesingle)] $u < j$ and $v \geq 0$,
\end{enumerate} 
where $u,j$, and $v$ are defined as before.

Case 1\textquotesingle) can be argued as the case 1). For case 2\textquotesingle), we can modify the argument used in the case 2). After representing
\begin{equation*}
\tilde{R} = \psi_{n}(\{j\}) * R = \tilde{R}' * \tilde{M},    
\end{equation*}
as a vertical product of a top state $\tilde{R}'$ and a middle state $\tilde{M}$, we notice that $\tilde{R}' \in \mathcal{R}_{n,k-1}$ and $\tilde{M} \in \mathcal{R}_{n',1}$, where $n' = n_{b}(\tilde{R}')$. Hence, by induction on $k$, one can show that $(\tilde{R},I)$ has property $\mathrm{P}(n,k)$ and consequently, using the same arguments as in the case 2), $(R,I)$ with $u = j$ and $v > 0$ has property $\mathrm{P}(n,k)$. For case 3\textquotesingle), one modifies the argument from case 3) as follows. After representing
\begin{equation*}
R' = \tilde{R}' * \tilde{M}, 
\end{equation*}
as a vertical product of a top state $\tilde{R}'$ and a middle state $\tilde{M}$ in each corresponding pair $(R',I') \in \mathcal{P}'_{i}$ with $|I'| = 0$, we see that $\tilde{R}' \in \mathcal{R}_{n,k'}$ and $\tilde{M} \in \mathcal{R}_{n',k-k'}$, where $k' = \max\{k-i,0\}$ and $n' = n_{b}(\tilde{R}')$. Hence, by induction on $k$ when $i > 0$ or by either cases 1\textquotesingle) or 2\textquotesingle) when $i = 0$, one can show that each $(R',I' \oplus I)$ has property $\mathrm{P}(n-2|I'|,k)$ and consequently, using the same arguments as in the case 3), $(R,I)$ has property $\mathrm{P}(n,k)$. Therefore, we proved case b\textquotesingle). 
\end{proof}

For a roof state $R$ with $\mathrm{ht}(R) \geq 1$ and $n_{t}(R) = n$, let $\mathcal{D}_{n}$ be the set of all pairs $(J,I)$ of finite subsets of $\mathbb{Z}_{\geq 0}$ such that $J = \{j_{1},j_{2},\ldots,j_{t+1}\}$ and $I = \{i_{1},i_{2},\ldots,i_{t}\}$, where $t \geq 0$ and $0 \leq j_{k} < i_{k} < j_{k+1} \leq n$ for $k = 1,2,\ldots,t$. As in \cite{DW2022}, define
\begin{equation*}
\mathcal{H}(R) = \{(J,I) \in \mathcal{D}_{n} \mid J \subseteq \mathcal{J}(R)\}.
\end{equation*}

\begin{proof}[Proof of Theorem~\ref{thm:vertical_factor_thm}]
Let $\Lambda$ be a local family of arcs that vertically factorizes a Catalan state $C$. Among all pairs of integers $0 \leq j_{1} \leq j_{2} \leq m$ which satisfy condition ii) of Definition~\ref{def:vertical_factor}, we choose a pair with $j_{1}$ maximal and $j_{2}$ minimal and denote these numbers by $j_{1}(\Lambda, C)$ and $j_{2}(\Lambda, C)$, respectively. We prove \eqref{eqn:vertical_factor_thm} by induction on $j_{1}^{*} = j_{1}(\Lambda, C)$. 

For $j_{1}^{*} = 0$, we first consider the case when $E_{\Lambda} \cap \{y'_{1},y'_{2},\ldots,y'_{m}\} =  \emptyset$, where $m = \mathrm{ht}(C)$. Let $C = R * F$, where $R$ is a roof state with $\mathrm{ht}(R) = j_{2}^{*} = j_{2}(\Lambda, C)$ and $F$ is a floor state. As it could easily be seen, there is a roof state $R_{\Lambda}$ with $\mathrm{ht}(R_{\Lambda}) = j_{2}^{*}$ such that
\begin{equation*}
C_{\Lambda} = R_{\Lambda} * F.
\end{equation*}
We find the maximal $j$ such that $y_{j} \in E_{\Lambda}$ and then, as one may check,
\begin{equation*}
F^{*} \in \mathcal{R}_{n,j-j_{2}^{*}},
\end{equation*}
where $n = n_{t}(C)$. Therefore, by Lemma~\ref{lem:theta_state_expn_new}, $(F^{*},\emptyset)$ has a $\Theta_{A}$-state expansion 
\begin{equation*}
\Theta_{A}(F^{*},\emptyset;\cdot) = \sum_{(R',I') \in \mathcal{P}'} Q_{R',I'}(A) \, \Theta_{A}(R',I';\cdot)
\end{equation*}
with a family $\mathcal{P}'$ that satisfies properties i)-iii) of Lemma~\ref{lem:theta_state_expn_new}. Hence,
\begin{equation*}
C(A) = \Theta_{A}(R,\emptyset;F) = \Theta_{A}(F^{*},\emptyset;R^{*}) = \sum_{(R',I') \in \mathcal{P}'} Q_{R',I'}(A) \, \Theta_{A}(R',I';R^{*})
\end{equation*}
and 
\begin{equation*}
C_{\Lambda}(A) = \Theta_{A}(R_{\Lambda},\emptyset;F) = \Theta_{A}(F^{*},\emptyset;R_{\Lambda}^{*}) = \sum_{(R',I') \in \mathcal{P}'} Q_{R',I'}(A) \, \Theta_{A}(R',I';R_{\Lambda}^{*}).
\end{equation*}
For a pair $(R',I') \in \mathcal{P}'$, define 
\begin{equation*}
C' = (R' * (R^{*})_{I'})^{*} \quad \text{and} \quad C'' = (R' * (R_{\Lambda}^{*})_{I'})^{*}.
\end{equation*}
As it could easily be seen, $C'$ and $C''$ are either both $K_{0}$ or Catalan states with no bottom returns. In the former case, clearly
\begin{equation*}
\Theta_{A}(R',I';R^{*}) = 0 = C_{T(\Lambda)}(A) \cdot \Theta_{A}(R',I';R_{\Lambda}^{*}).
\end{equation*}
In the latter case, since no arc from $\Lambda$ is removed while $C'$ is constructed, there is a local family of arcs $\Lambda'$ in $C'$ corresponding to $\Lambda$. Clearly, $T(\Lambda') = T(\Lambda)$ and $\Lambda'$ vertically factorizes $C'$. As one may check $C'' = (C')_{\Lambda'}$ and, since $(R',I') \in \mathcal{P}'$, the local family of arcs $\Lambda'$ and Catalan state $C'$ satisfy assumptions of Lemma~\ref{lem:plucking_poly_C_Lambda}. Therefore, $C'$ is realizable if and only if $C''$ is realizable. If both $C'$ and $C''$ are not realizable, then clearly 
\begin{equation*}
\Theta_{A}(R',I';R^{*}) = C'(A) = 0 = C_{T(\Lambda)}(A) \cdot C''(A) = C_{T(\Lambda)}(A) \cdot \Theta_{A}(R',I';R_{\Lambda}^{*}). 
\end{equation*}
If both $C'$ and $C''$ are realizable then by Lemma~\ref{lem:beta_relations_C_Lambda},
\begin{equation*}
\beta(C')-\beta(C'') = \beta(C_{T(\Lambda')}) - \lambda^{2},
\end{equation*}
where $\lambda$ is the number of elements of $\Lambda'$ (hence also of $\Lambda$). Since $T(\Lambda') = T(\Lambda)$, 
after applying Theorem~\ref{thm:coef_no_bot_rtn} and Lemma~\ref{lem:plucking_poly_C_Lambda}, we see that
\begin{eqnarray*}
C'(A) &=& A^{2\beta(C')-m'n} \, Q_{A^{-4}}^{*}(T(C'),v',\alpha') \\
&=& A^{2(\beta(C'')+\beta(C_{T(\Lambda)})-\lambda^{2})-m'n} \, Q_{A^{-4}}^{*}(T(\Lambda),v,\alpha_{0}) \cdot Q_{A^{-4}}^{*}(T(C''),v'',\alpha'') \\
&=& C_{T(\Lambda)}(A) \cdot C''(A),
\end{eqnarray*}
where $m' = \mathrm{ht}(C') = \mathrm{ht}(C'')$, $(T(C'),v',\alpha')$ and $(T(C''),v'',\alpha'')$ are the plane rooted trees with delay corresponding to $C'$ and $C''$, respectively, and $\alpha_{0} \equiv 1$. Therefore, we showed that for any pair $(R',I') \in \mathcal{P}'$,
\begin{equation*}
\Theta_{A}(R',I';R^{*}) = C_{T(\Lambda)}(A) \cdot \Theta_{A}(R',I';R_{\Lambda}^{*}).
\end{equation*}
Consequently,
\begin{eqnarray*}
C(A) &=& \sum_{(R',I') \in \mathcal{P}'} Q_{R',I'}(A) \, \Theta_{A}(R',I';R^{*}) = \sum_{(R',I') \in \mathcal{P}'} Q_{R',I'}(A) \,  C_{T(\Lambda)}(A) \cdot \Theta_{A}(R',I';R_{\Lambda}^{*}) \\
&=& C_{T(\Lambda)}(A) \cdot C_{\Lambda}(A).
\end{eqnarray*}

For the case $E_{\Lambda} \cap \{y_{1},y_{2},\ldots,y_{m}\} =  \emptyset$. Let $\overline{C}$ and $\overline{\Lambda}$ denote reflections of $C$ and $\Lambda$ about a vertical line, respectively. Clearly, $\overline{\Lambda}$ vertically factorizes $\overline{C}$, $E_{\overline{\Lambda}} \cap \{y'_{1},y'_{2},\ldots,y'_{m}\} =  \emptyset$, and $j_{1}(\overline{\Lambda}, \overline{C}) = 0$ so, as it was shown above,
\begin{equation*}
\overline{C}(A) = C_{T(\overline{\Lambda})}(A) \cdot \overline{C}_{\overline{\Lambda}}(A).
\end{equation*}
Since 
\begin{equation*}
C(A) = \overline{C}(A^{-1}),\ C_{T(\Lambda)}(A) = C_{T(\overline{\Lambda})}(A^{-1}),\ \text{and}\ C_{\Lambda}(A) = \overline{C}_{\overline{\Lambda}}(A^{-1}),  
\end{equation*}
it follows that
\begin{equation*}
C(A) = \overline{C}(A^{-1}) = C_{T(\overline{\Lambda})}(A^{-1}) \cdot \overline{C}_{\overline{\Lambda}}(A^{-1}) = C_{T(\Lambda)}(A) \cdot C_{\Lambda}(A).
\end{equation*}
This concludes the base case for induction when $j_{1}^{*} = 0$.

Assume that $j_{1}^{*} > 0$ and \eqref{eqn:vertical_factor_thm} holds for any local family of arcs $\Lambda'$ of a Catalan state $C'$ that vertically factorizes $C'$ and for which $j_{1}(\Lambda',C') < j_{1}^{*}$. Let $\Lambda$ be a local family of arcs that vertically factorizes $C \in \mathrm{Cat}(m,n)$ and $j_{1}(\Lambda,C) = j_{1}^{*}$. We choose roof states $R$, $R_{\Lambda}$ and a floor state $F$ as before, so that $C = R*F$, $C_{\Lambda} = R_{\Lambda}*F$, and 
\begin{equation*}
\mathrm{ht}(R) = \mathrm{ht}(R_{\Lambda}) = j_{2}(\Lambda,C) \geq j_{1}^{*} \geq 1.
\end{equation*}
Since $\mathcal{J}(R) = \mathcal{J}(R_{\Lambda})$, we let
\begin{equation*}
\mathcal{H} = \mathcal{H}(R) = \mathcal{H}(R_\Lambda).
\end{equation*}
By Proposition~3.11 of \cite{DW2022},
\begin{equation*}
C(A) = \Theta_{A}(R,\emptyset;F) = \sum_{(J,I) \in \mathcal{H}} A^{-n+2(\Vert{J}\Vert-\Vert{I}\Vert)} \, \Theta_{A}(R^{J},I;F) 
\end{equation*}
and
\begin{equation*}
C_{\Lambda}(A) = \Theta_{A}(R_{\Lambda},\emptyset;F) = \sum_{(J,I) \in \mathcal{H}} A^{-n+2(\Vert{J}\Vert-\Vert{I}\Vert)} \, \Theta_{A}((R_\Lambda)^{J},I;F),
\end{equation*}
where $\Vert{K}\Vert$ equals the sum of all elements of $K$. For $(J,I) \in \mathcal{H}$, define
\begin{equation*}
C' = R^{J} * F_{I}\ \text{and}\ C'' = (R_\Lambda)^{J} * F_{I}
\end{equation*}
and notice that either $C'$ and $C''$ are $K_{0}$ or they are both Catalan states. In the former case, clearly
\begin{equation*}
\Theta_{A}(R^{J},I;F) = 0 = C_{T(\Lambda)}(A) \cdot \Theta_{A}((R_{\Lambda})^{J},I;F).
\end{equation*}
In the latter case, denote by $\Lambda'$ the local family of arcs of $C'$ corresponding to $\Lambda$. As it could easily be seen $\Lambda'$ vertically factorizes $C'$, $C'' = (C')_{\Lambda'}$, and $T(\Lambda') = T(\Lambda)$. Moreover, since $j_{1}(\Lambda';C') = j_{1}^{*}-1$, it follows by induction that
\begin{equation*}
\Theta_{A}(R^{J},I;F) = C'(A) = C_{T(\Lambda')}(A) \cdot C''(A) = C_{T(\Lambda)}(A) \cdot \Theta_{A}((R_{\Lambda})^{J},I;F).
\end{equation*}
Consequently, 
\begin{eqnarray*}
C(A) &=& \sum_{(J,I) \in \mathcal{H}} A^{-n+2(\Vert{J}\Vert-\Vert{I}\Vert)} \, \Theta_{A}(R^{J},I;F) \\
&=& \sum_{(J,I) \in \mathcal{H}} A^{-n+2(\Vert{J}\Vert-\Vert{I}\Vert)} \, C_{T(\Lambda)}(A) \cdot \Theta_{A}((R_\Lambda)^{J},I;F) \\
&=&  C_{T(\Lambda)}(A) \cdot C_{\Lambda}(A).
\end{eqnarray*}
Therefore, \eqref{eqn:vertical_factor_thm} follows by induction.

Finally, since $C_{T(\Lambda)}(A) \neq 0$, using \eqref{eqn:vertical_factor_thm} we see that, $C(A)\neq 0$ if and only if $C_{\Lambda}(A) \neq 0$. Therefore, by Theorem~\ref{thm:coef_nonzero}, $C$ is realizable if and only if $C_{\Lambda}$ is realizable.
\end{proof}

\begin{example}
\label{ex:vertical_factor_thm}
For the Catalan state $C$ with a local family of arcs $\Lambda$ of $C$ shown in Figure~\ref{fig:C_Lambda}(a), as one may check,
\begin{equation*}
C_{T(\Lambda)}(A) = A^{-2}+A^{2} \quad \text{and} \quad C_{\Lambda}(A) = A^{-12} + 2 A^{-8} + 3 A^{-4} + 2 + A^{4}.
\end{equation*}
Therefore, by Theorem~\ref{thm:vertical_factor_thm},
\begin{equation*}
C(A) = C_{T(\Lambda)}(A) \cdot C_{\Lambda}(A) = A^{-14} + 3 A^{-10} + 5 A^{-6} + 5 A^{-2} + 3 A^{2} + A^{6}.    
\end{equation*}
\end{example}

\section*{Acknowledgement} Authors would like to thank Professor J\'{o}zef H. Przytycki for all valuable discussions and suggestions.

\bibliography{mybibfile}

\end{document}